\newtheorem{theorem}{Theorem}[section]
\newtheorem{proposition}[theorem]{Proposition}
\newtheorem{lemma}[theorem]{Lemma}
\newtheorem{corollary}[theorem]{Corollary}
\theoremstyle{definition}
\newtheorem{definition}[theorem]{Definition}
\newtheorem{example}[theorem]{Example}
\newtheorem{question}[theorem]{Question}
\journal{Topology and its Applications}
\begin{document}

\begin{frontmatter}



\title{Quotient spaces with strong subgyrogroups \tnoteref{t1}}
\tnotetext[t1]{This research was supported by the National Natural Science Foundation of China (Nos. 12071199, 11661057), the Natural Science Foundation of Jiangxi Province, China (No. 20192ACBL20045).}


\author[M. Bao]{Meng Bao}
\ead{mengbao95213@163.com}
\address[M. Bao]{College of Mathematics, Sichuan University, Chengdu 610064, China}

\author[X. Ling]{Xuewei Ling}
\ead{781736783@qq.com}
\address[X. Ling]{Institute of Mathematics, Nanjing Normal University, Nanjing 210046, China}

\author[X. Xu]{Xiaoquan Xu\corref{mycorrespondingauthor}}
\cortext[mycorrespondingauthor]{Corresponding author.}
\ead{xiqxu2002@163.com}
\address[X. Xu]{Fujian Key Laboratory of Granular Computing and Applications, Minnan Normal University, Zhangzhou 363000, China}

\begin{abstract}
In this paper, we mainly investigate the quotient spaces $G/H$ when $G$ is a strongly topological gyrogroup and $H$ is a strong subgyrogroup of $G$. It is shown that if $G$ is a strongly topological gyrogroup, $H$ is a closed strong subgyrogroup of $G$ and $H$ is inner neutral, then the quotient space $G/H$ is first-countable if and only if $G/H$ is a bisequential space if and only if $G/H$ is a weakly first-countable space if and only if $G/H$ is a $csf$-countable and sequential $\alpha_{7}$-space. Moreover, it is shown that if $G$ is a strongly topological gyrogroup and $H$ is a locally compact strong subgyrogroup of $G$, then there exists an open neighborhood $U$ of the identity element $0$ such that $\pi (\overline{U})$ is closed in $G/H$ and the restriction of $\pi $ to $\overline{U}$ is a perfect mapping from $\overline{U}$ onto the subspace $\pi (\overline{U})$; if $H$ is a locally compact metrizable strong subgyrogroup of $G$ and the quotient space $G/H$ is sequential, then $G$ is also sequential; if $H$ is a closed first-countable and separable strong subgyrogroup of $G$, the quotient space $G/H$ is an $\aleph_{0}$-space, then $G$ is an $\aleph_{0}$-space; if the quotient space $G/H$ is a cosmic space, then $G$ is also a cosmic space; if the quotient space $G/H$ has a star-countable $cs$-network or star-countable $wcs^{*}$-network, then $G$ also has a star-countable $cs$-network or star-countable $wcs^{*}$-network, respectively.

\end{abstract}

\begin{keyword}
Topological gyrogroups, strong subgyrogroups, metrizable, quotient spaces
\MSC 22A22; 54A20; 20N05; 18A32; 20B30.

\end{keyword}




\end{frontmatter}


\section{Introduction}

Let $c$ be a positive constant representing the speed of light in vacuum and $\mathbb{R}_{c}^{3}=\{\mathbf{v}\in \mathbb{R}^{3}:||\mathbf{v}||<c\}$ the $c$-ball of relativistically admissible velocities with Einstein velocity addition. It is well-known that Einstein velocity addition is the standard velocity addition of relativistically admissible velocities that Einstein introduced in 1905 that founded the special theory of relativity. The Einstein velocity addition $\oplus _{E}$ is given as the following: $$\mathbf{u}\oplus _{E}\mathbf{v}=\frac{1}{1+\frac{\mathbf{u}\cdot \mathbf{v}}{c^{2}}}(\mathbf{u}+\frac{1}{\gamma _{\mathbf{u}}}\mathbf{v}+\frac{1}{c^{2}}\frac{\gamma _{\mathbf{u}}}{1+\gamma _{\mathbf{u}}}(\mathbf{u}\cdot \mathbf{v})\mathbf{u}),$$ where $\mathbf{u,v}\in \mathbb{R}_{c}^{3}$ and $\gamma _{\mathbf{u}}$ is given by $$\gamma _{\mathbf{u}}=\frac{1}{\sqrt{1-\frac{\mathbf{u}\cdot \mathbf{u}}{c^{2}}}}.$$ By the study of the $c$-ball of relativistically admissible velocities, A.A. Ungar discovered that the seemingly structureless Einstein addition of relativistically admissible velocities possesses a rich grouplike structure and he posed the concept of gyrogroups in \cite{UA2002}. Indeed, a gyrogroup is the most natural extension of a group into the regime of the nonassociative algebra that we need for extending analytic Euclidean geometry into analytic hyperbolic geometry. In 2017, the gyrogroup was equipped with a topology by W. Atiponrat \cite{AW} such that the binary operation $\oplus: G\times G\rightarrow G$ is jointly continuous and the inverse mapping $\ominus (\cdot): G\rightarrow G$, i.e. $x\rightarrow \ominus x$, is also continuous and she called it topological gyrogroups. At the same time, she asked whether the first-countability axiom can imply metrizability in a Hausdorff topological gyrogroup?  Shortly afterwards, Cai, Lin and He in \cite{CZ} proved that every topological gyrogroup is a rectifiable space, which implies that the equivalence between the axioms of first-countable and metrizability in a Hausdorff topological gyrogroup. By further researches of the classical M\"{o}bius gyrogroups, Einstein gyrogroups, and Proper Velocity gyrogroups, Bao and Lin \cite{BL} found that each of them has an open neighborhood base at the identity element $0$ such that all elements of the base are invariant under the groupoid automorphisms with standard topology. Therefore, they posed the concept of strongly topological gyrogroups and showed that every feathered strongly topological gyrogroup is paracompact. A series of results on topological gyrogroups and strongly topological gyrogroups have been obtained in \cite{AA2010,AW1,AW2020,BL1,BL2,BL3,BLL,BLX,BZX,BZX2,LF,LF1,LF2}. In particular, it was proved in \cite{BL1,BL2} that each $T_{0}$-strongly topological gyrogroup is completely regular, every $T_{0}$-strongly topological gyrogroup with a countable pseudocharacter is submetrizable and each locally paracompact strongly topological gyrogroup is paracompact.

In \cite{ST}, T. Suksumran and K. Wiboonton posed the notion of $L$-subgyrogroups and showed that if $H$ is an $L$-subgyrogroup of a gyrogroup $G$, then the set $\{a\oplus H:a\in G\}$ forms a disjoint partition of $G$. Therefore, it is natural to research the quotient spaces of a strongly topological gyrogroup with respect to $L$-subgyrogroups as left cosets. Moreover, Bao and Lin proved that if $G$ is a $T_{0}$-strongly topological gyrogroup with a symmetric neighborhood base $\mathscr U$ at $0$ and $H$ is an admissible subgyrogroup generated from $\mathscr U$, then the left coset space $G/H$ is submetrizable. Recently, Bao and Xu \cite{BX2022} constructed a subgyrogroup $H$ in a strongly topological gyrogroup $G$ such that $gyr[x,y](H)=H$ for all $x,y\in G$, hence they introduced the concept of strong subgyrogroups in a strongly topological gyrogroup. They showed that if $G$ is a strongly topological gyrogroup, $H$ is a closed strong subgyrogroup of $G$ and $H$ is inner neutral, then $G/H$ is first-countable if and only if $G/H$ is Fr\'echet-Urysohn with an $\omega^{\omega}$-base.

In this paper, we continue to research the quotient spaces of strongly topological gyrogroups with respect to closed strong subgyrogroups. First, it is not difficult to see that if $G$ is a strongly topological gyrogroup with a symmetric neighborhood base $\mathscr U$ at $0$, $P$ is an admissible subgyrogroup generated from $\mathscr U$ and the quotient space $G/P$ is first-countable, then it is metrizable. However, the two arrows space is a compact coset space which is first-countable, but not submetrizable, and it is homeomorphic to $G/H$, where $G$ is a strongly topological gyrogroup and $H$ is a closed strong subgyrogroup of $G$. Moreover, it is shown that if $G$ is a strongly topological gyrogroup, $H$ is a closed strong subgyrogroup of $G$ and $H$ is inner neutral, then the quotient space $G/H$ is first-countable if and only if $G/H$ is a bisequential space if and only if $G/H$ is a weakly first-countable space if and only if $G/H$ is a $csf$-countable and sequential $\alpha_{7}$-space. More important, it is shown that if $G$ is a strongly topological gyrogroup and $H$ is a locally compact strong subgyrogroup of $G$, then there exists an open neighborhood $U$ of the identity element $0$ such that $\pi (\overline{U})$ is closed in $G/H$ and the restriction of $\pi $ to $\overline{U}$ is a perfect mapping from $\overline{U}$ onto the subspace $\pi (\overline{U})$, which implies that if $H$ is a locally compact strong subgyrogroup of a strongly topological gyrogroup $G$ and the quotient space $G/H$ has some nice properties, such as locally compact, locally countably compact, locally pseudocompact, etc., then $G$ also has the same properties. Then, we show that when $G$ is a strongly topological gyrogroup and $H$ is a locally compact metrizable strong subgyrogroup of $G$, if the quotient space $G/H$ is strictly (strongly) Fr\'echet-Urysohn, then $G$ is also strictly (strongly) Fr\'echet-Urysohn; if the quotient space $G/H$ is sequential, then $G$ is also sequential. Therefore, some important and interesting results in \cite{LS} are improved. Finally, we study the quotient space $G/H$ with some generalized metric properties, where $G$ is a strongly topological gyrogroup and $H$ is a closed first-countable and separable strong subgyrogroup of $G$. We prove that if the quotient space $G/H$ is an $\aleph_{0}$-space, then $G$ is an $\aleph_{0}$-space; if the quotient space $G/H$ is a cosmic space, then $G$ is also a cosmic space; if the quotient space $G/H$ has a star-countable $cs$-network or star-countable $wcs^{*}$-network, then $G$ also has a star-countable $cs$-network or star-countable $wcs^{*}$-network, respectively.

\section{Preliminary}

Throughout this paper, all topological spaces are assumed to be Hausdorff, unless otherwise is explicitly stated. Let $\mathbb{N}$ be the set of all positive integers and $\omega$ the first infinite ordinal. The readers may consult \cite{AA, E, linbook, UA} for notation and terminology not explicitly given here. Next we recall some definitions and facts.

\begin{definition}\cite{AW}
Let $G$ be a nonempty set, and let $\oplus: G\times G\rightarrow G$ be a binary operation on $G$. Then the pair $(G, \oplus)$ is called a {\it groupoid}. A function $f$ from a groupoid $(G_{1}, \oplus_{1})$ to a groupoid $(G_{2}, \oplus_{2})$ is called a {\it groupoid homomorphism} if $f(x\oplus_{1}y)=f(x)\oplus_{2} f(y)$ for any elements $x, y\in G_{1}$. Furthermore, a bijective groupoid homomorphism from a groupoid $(G, \oplus)$ to itself will be called a {\it groupoid automorphism}. We write $\mbox{Aut}(G, \oplus)$ for the set of all automorphisms of a groupoid $(G, \oplus)$.
\end{definition}

\begin{definition}\cite{UA}
Let $(G, \oplus)$ be a groupoid. The system $(G,\oplus)$ is called a {\it gyrogroup}, if its binary operation satisfies the following conditions:

\smallskip
$(G1)$ There exists a unique identity element $0\in G$ such that $0\oplus a=a=a\oplus0$ for all $a\in G$.

\smallskip
$(G2)$ For each $x\in G$, there exists a unique inverse element $\ominus x\in G$ such that $\ominus x \oplus x=0=x\oplus (\ominus x)$.

\smallskip
$(G3)$ For all $x, y\in G$, there exists $\mbox{gyr}[x, y]\in \mbox{Aut}(G, \oplus)$ with the property that $x\oplus (y\oplus z)=(x\oplus y)\oplus \mbox{gyr}[x, y](z)$ for all $z\in G$.

\smallskip
$(G4)$ For any $x, y\in G$, $\mbox{gyr}[x\oplus y, y]=\mbox{gyr}[x, y]$.
\end{definition}

\begin{lemma}\cite{UA}\label{a}
Let $(G, \oplus)$ be a gyrogroup. Then for any $x, y, z\in G$, we obtain the followings:

\begin{enumerate}
\smallskip
\item $(\ominus x)\oplus (x\oplus y)=y$. \ \ \ (left cancellation law)

\smallskip
\item $(x\oplus (\ominus y))\oplus \mbox{gyr}[x, \ominus y](y)=x$. \ \ \ (right cancellation law)

\smallskip
\item $(x\oplus \mbox{gyr}[x, y](\ominus y))\oplus y=x$.

\smallskip
\item $\mbox{gyr}[x, y](z)=\ominus (x\oplus y)\oplus (x\oplus (y\oplus z))$.

\smallskip
\item $(x\oplus y)\oplus z=x\oplus (y\oplus \mbox{gyr}[y,x](z))$.
\end{enumerate}
\end{lemma}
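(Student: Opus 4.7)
The plan is to derive the five identities sequentially from axioms (G1)--(G4), with the left associativity-defect law (G3) as the main engine and the loop property (G4) used to collapse gyroautomorphism factors. Throughout I would use silently the fact that each $\mbox{gyr}[x,y]$, being a groupoid automorphism, is a bijection of $G$ fixing $0$ and commuting with the inversion map $\ominus$.

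The first identity I would establish is (2), the right cancellation law, which is actually the easiest: apply (G3) with the three arguments taken as $x,\,\ominus y,\,y$. The inner sum $(\ominus y)\oplus y$ collapses to $0$ by (G2), so the left-hand side is $x\oplus 0=x$, while the right-hand side is precisely $(x\oplus(\ominus y))\oplus \mbox{gyr}[x,\ominus y](y)$. This gives (2) in one line.

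Next I would prove the left cancellation law (1). Apply (G3) with arguments $\ominus x,\,x,\,y$ to obtain $(\ominus x)\oplus(x\oplus y)=\mbox{gyr}[\ominus x,x](y)$. By (G4) applied with $x\mapsto\ominus x,\,y\mapsto x$, this equals $\mbox{gyr}[0,x](y)$. To conclude, verify $\mbox{gyr}[0,x]=\mbox{id}_G$ by expanding $0\oplus(x\oplus y)=x\oplus y$ via (G3) and cancelling using (2). Once (1) is available, identity (4) is immediate: apply (G3) to the triple $x,y,z$ and left-cancel $x\oplus y$ from both sides.

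For (5), first establish the \emph{gyroautomorphism inversion law} $\mbox{gyr}[x,y]^{-1}=\mbox{gyr}[y,x]$, obtained by using (4) to compare two expressions for a common nested sum; then (5) is just (G3) after replacing $z$ by $\mbox{gyr}[y,x](z)$. Identity (3) follows from (2) by substituting $y\mapsto\ominus y$, invoking the inversion law, and using the commutation of $\mbox{gyr}$ with $\ominus$. The main obstacle is not any single deep step but the book-keeping of gyroautomorphism factors: nonassociativity means every rearrangement spawns a new $\mbox{gyr}$ term, so the proof amounts to a methodical, rather than inspired, application of (G4) and the inversion law to collapse such chains.
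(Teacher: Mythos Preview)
The paper does not supply a proof of this lemma; it is quoted from Ungar's monograph. Your outline is sound for (2), (4), and (5), and correctly identifies the inversion law $\mbox{gyr}[x,y]^{-1}=\mbox{gyr}[y,x]$ as the hinge for (5). There is a minor slip in (1): after reducing to $x\oplus y=x\oplus\mbox{gyr}[0,x](y)$ you propose to ``cancel using (2)'', but (2) gives right, not left, cancellation. The fix is immediate---observe that $L_{\ominus x}\circ L_x=\mbox{gyr}[\ominus x,x]$ is a bijection (being an automorphism), so $L_x$ is injective and the desired identity follows.

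The real gap is in (3). Substituting $y\mapsto\ominus y$ in (2) yields $(x\oplus y)\oplus\mbox{gyr}[x,y](\ominus y)=x$, which places the gyrated term as the \emph{outer} summand; identity (3) needs it as the \emph{inner} summand: $(x\oplus\mbox{gyr}[x,y](\ominus y))\oplus y=x$. Neither the inversion law nor the fact that gyrations commute with $\ominus$ interchanges these positions---the two equalities are structurally different statements about the loop $(G,\oplus)$. In Ungar's development (3) is the second right cancellation law for the cooperation $a\boxplus b=a\oplus\mbox{gyr}[a,\ominus b](b)$, and its proof requires either the right loop property $\mbox{gyr}[a,b]=\mbox{gyr}[a,b\oplus a]$ (itself a nontrivial consequence of (G4)) or an equivalent nested-gyration identity. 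You will need one such extra ingredient; the step as written does not go through.
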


\begin{proposition}\cite{ST}\label{2mt1}
Let $G$ be a gyrogroup and let $X\subseteq G$. Then the followings are equivalent:

(1) $\mbox{gyr}[a,b](X)\subseteq X$ for all $a,b\in G$;

(2) $\mbox{gyr}[a,b](X)=X$ for all $a,b\in G$.
\end{proposition}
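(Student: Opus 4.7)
The implication (2) $\Rightarrow$ (1) is of course trivial, so the content is the direction (1) $\Rightarrow$ (2). Since each $\mbox{gyr}[a,b]$ is a groupoid automorphism, it is a bijection on $G$; the inclusion $\mbox{gyr}[a,b](X) \subseteq X$ then gives half of the desired equality, and what remains is to establish the reverse inclusion $X \subseteq \mbox{gyr}[a,b](X)$ for arbitrary $a,b \in G$. My plan is to reduce this to another instance of hypothesis (1) by proving that $\mbox{gyr}[a,b]^{-1} = \mbox{gyr}[b,a]$ in any gyrogroup.

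To derive this inversion identity I would combine axiom (G3) with part (5) of Lemma \ref{a}. Axiom (G3) gives
\[
(a \oplus b) \oplus \mbox{gyr}[a,b](z) = a \oplus (b \oplus z),
\]
while the left-hand side also equals, by Lemma \ref{a}(5) applied with $x=a$, $y=b$ and the element $\mbox{gyr}[a,b](z)$ in place of $z$,
\[
(a \oplus b) \oplus \mbox{gyr}[a,b](z) = a \oplus \bigl(b \oplus \mbox{gyr}[b,a](\mbox{gyr}[a,b](z))\bigr).
\]
Equating the two right-hand sides and applying left cancellation (Lemma \ref{a}(1)) twice yields $z = \mbox{gyr}[b,a](\mbox{gyr}[a,b](z))$ for every $z \in G$. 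Interchanging the roles of $a$ and $b$ gives the other composition, and hence $\mbox{gyr}[a,b]^{-1} = \mbox{gyr}[b,a]$.

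With this identity in hand, the remainder is a one-line argument. Given (1), apply it with the pair $(b,a)$ in place of $(a,b)$ to obtain $\mbox{gyr}[b,a](X) \subseteq X$. Applying the automorphism $\mbox{gyr}[a,b]$ to both sides and using $\mbox{gyr}[a,b]\circ\mbox{gyr}[b,a] = \mbox{id}$ gives $X \subseteq \mbox{gyr}[a,b](X)$, which combined with (1) yields $\mbox{gyr}[a,b](X) = X$.

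The only genuinely nontrivial step is establishing the inversion law $\mbox{gyr}[a,b]^{-1} = \mbox{gyr}[b,a]$; everything else is formal. One could instead invoke this identity as a standard fact from Ungar's book \cite{UA}, but since the paper supplies axiom (G3) and Lemma \ref{a}(5), the derivation above is self-contained and arguably preferable. No further machinery (such as the bumper identity (G4) or the gyration properties (4) in Lemma \ref{a}) appears to be needed.
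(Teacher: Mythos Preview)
Your argument is correct. The paper itself does not supply a proof of this proposition but simply cites it from \cite{ST}; the standard proof there likewise hinges on the gyroautomorphic inverse property $\mbox{gyr}[a,b]^{-1}=\mbox{gyr}[b,a]$, so your approach matches the expected one.
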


Notice that a group is a gyrogroup $(G,\oplus)$ such that $\mbox{gyr}[x,y]$ is the identity function for all $x, y\in G$. The definition of a subgyrogroup is as follows.

\begin{definition}\cite{ST}
Let $(G,\oplus)$ be a gyrogroup. A nonempty subset $H$ of $G$ is called a {\it subgyrogroup}, denoted
by $H\leq G$, if $H$ forms a gyrogroup under the operation inherited from $G$ and the restriction of $\mbox{gyr}[a,b]$ to $H$ is an automorphism of $H$ for all $a,b\in H$.

\smallskip
Furthermore, a subgyrogroup $H$ of $G$ is said to be an {\it $L$-subgyrogroup}, denoted
by $H\leq_{L} G$, if $\mbox{gyr}[a, h](H)=H$ for all $a\in G$ and $h\in H$.
\end{definition}

\begin{definition}\cite{AW}
A triple $(G, \tau, \oplus)$ is called a {\it topological gyrogroup} if the following statements hold:

\smallskip
(1) $(G, \tau)$ is a topological space.

\smallskip
(2) $(G, \oplus)$ is a gyrogroup.

\smallskip
(3) The binary operation $\oplus: G\times G\rightarrow G$ is jointly continuous while $G\times G$ is endowed with the product topology, and the operation of taking the inverse $\ominus (\cdot): G\rightarrow G$, i.e. $x\rightarrow \ominus x$, is also continuous.
\end{definition}

Obviously, every topological group is a topological gyrogroup. However, every topological gyrogroup whose gyrations are not identically equal to the identity is not a topological group. In particular, it was proved in \cite{AW} that the Einstein gyrogroup with the standard topology is a topological gyrogroup but not a topological group. Next, we introduce the definition of a strongly topological gyrogroup, it is very important in this paper.

\begin{definition}{\rm (\cite{BL})}\label{d11}
Let $G$ be a topological gyrogroup. We say that $G$ is a {\it strongly topological gyrogroup} if there exists a neighborhood base $\mathscr U$ of $0$ such that, for every $U\in \mathscr U$, $\mbox{gyr}[x, y](U)=U$ for any $x, y\in G$. For convenience, we say that $G$ is a strongly topological gyrogroup with neighborhood base $\mathscr U$ of $0$.
\end{definition}

For each $U\in \mathscr U$, we can set $V=U\cup (\ominus U)$. Then, $$\mbox{gyr}[x,y](V)=\mbox{gyr}[x, y](U\cup (\ominus U))=\mbox{gyr}[x, y](U)\cup (\ominus \mbox{gyr}[x, y](U))=U\cup (\ominus U)=V,$$ for all $x, y\in G$. Obviously, the family $\{U\cup(\ominus U): U\in \mathscr U\}$ is also a neighborhood base of $0$. Therefore, we may assume that $U$ is symmetric for each $U\in\mathscr U$ in Definition~\ref{d11}. Moreover, in the classical M\"{o}bius, Einstein, or Proper Velocity gyrogroups, we know that gyrations are indeed special rotations, however for an arbitrary gyrogroup, gyrations belong to the automorphism group of $G$ and need not be necessarily rotations.

In \cite{BL}, the authors proved that there is a strongly topological gyrogroup which is not a topological group, see Example \ref{lz1}.

\begin{example}\cite{BL}\label{lz1}
Let $\mathbb{D}$ be the complex open unit disk $\{z\in \mathbb{C}:|z|<1\}$. We consider $\mathbb{D}$ with the standard topology. In \cite[Example 2]{AW}, define a M\"{o}bius addition $\oplus _{M}: \mathbb{D}\times \mathbb{D}\rightarrow \mathbb{D}$ to be a function such that $$a\oplus _{M}b=\frac{a+b}{1+\bar{a}b}\ \mbox{for all}\ a, b\in \mathbb{D}.$$ Then $(\mathbb{D}, \oplus _{M})$ is a gyrogroup, and it follows from \cite[Example 2]{AW} that $$\mbox{gyr}[a, b](c)=\frac{1+a\bar{b}}{1+\bar{a}b}c\ \mbox{for any}\ a, b, c\in \mathbb{D}.$$ For any $n\in \mathbb{N}$, let $U_{n}=\{x\in \mathbb{D}: |x|\leq \frac{1}{n}\}$. Then, $\mathscr U=\{U_{n}: n\in \mathbb{N}\}$ is a neighborhood base of $0$. Moreover, we observe that $|\frac{1+a\bar{b}}{1+\bar{a}b}|=1$. Therefore, we obtain that $\mbox{gyr}[x, y](U)\subseteq U$, for any $x, y\in \mathbb{D}$ and each $U\in \mathscr U$, then it follows that $\mbox{gyr}[x, y](U)=U$ by Proposition \ref{2mt1}. Hence, $(\mathbb{D}, \oplus _{M})$ is a strongly topological gyrogroup. However, $(\mathbb{D}, \oplus _{M})$ is not a group by \cite[Example 2]{AW}.
\end{example}

Then, Bao and Lin constructed the following example by Example \ref{lz1} to show that for any cardinality $\kappa>\omega$, there exists a gyrogroup $G$ with subgyrogroup $H$ of the cardinality $\kappa$ such that $H$ is not a group. The example guarantees the existence of gyrogroups with cardinality $\kappa$ such that $\kappa>\omega$, which is important for the research of infinite gyrogroups.

\begin{example}\cite{BL3}
Let $\mathbb{D}$ be the gyrogroup in Example~\ref{lz1} and let $\kappa$ be an infinite cardinal number. It follows from \cite[Theorem 2.1]{ST2} that $\mathbb{D}^{\kappa}$ is a gyrogroup. Fix a subset $X$ of the gyrogroup $\mathbb{D}^{\kappa}$ such that the cardinality of $X$ is equal to $\kappa$ and $X$ contains arbitrary three points $x=(x_{\alpha})_{\alpha<\kappa}$, $y=(y_{\alpha})_{\alpha<\kappa}$ and $z=(z_{\alpha})_{\alpha<\kappa}$ of $\mathbb{D}^{\kappa}$ such that there exists $\beta<\alpha$ with $x_{\beta}=1/2, y_{\beta}=i/2$ and $z_{\beta}=-1/2$. From the proof of \cite[Example 2]{AW}, we see that $x\oplus (y\oplus z)\neq (x\oplus y)\oplus z$. Put $H=\langle X\rangle$, that is, $H$ is a subgyrogroup generated from $X$. Then the cardinality of $H$ is also equal to $\kappa$. Moreover, since $x, y, z\in H$, it follows that $H$ is not a group.
\end{example}

We recall the following concept of the coset space of a topological gyrogroup.

Let $(G, \tau, \oplus)$ be a topological gyrogroup and $H$ an $L$-subgyrogroup of $G$. It follows from \cite[Theorem 20]{ST} that $G/H=\{a\oplus H:a\in G\}$ is a coset space which defines a partition of $G$. We denote by $\pi$ the mapping $a\mapsto a\oplus H$ from $G$ onto $G/H$. Clearly, for each $a\in G$, we have $\pi^{-1}(\pi(a))=a\oplus H$. Indeed, for any $a\in G$ and $h\in H$,
\begin{eqnarray}
(a\oplus h)\oplus H&=&a\oplus (h\oplus \mbox{gyr}[h,a](H))\nonumber\\
&=&a\oplus (h\oplus \mbox{gyr}^{-1}[a,h](H))\nonumber\\
&=&a\oplus (h\oplus H)\nonumber\\
&=&a\oplus H\nonumber
\end{eqnarray}

Denote by $\tau (G)$ the topology of $G$, the quotient topology on $G/H$ is as follows: $$\tau (G/H)=\{O\subseteq G/H: \pi^{-1}(O)\in \tau (G)\}.$$

Throughout this paper, denote by $\pi$ the natural homomorphism from a topological gyrogroup $G$ to its quotient topology on $G/H$.

Then we recall some important concepts in the following researches.

\begin{definition}
Let $X$ be a topological space.

$(1)$\, $X$ is called a {\it weakly first-countable space} or {\it $gf$-countable space} \cite{Aav66} if for each point $x\in X$ it is possible to assign a sequence $\{B(n,x):n\in\mathbb{N}\}$ of subsets of $X$ containing $x$ in such a way that $B(n+1,x)\subseteq B(n,x)$ and so that a set $U$ is open if, and only if, for each $x\in U$ there exists $n\in\mathbb{N}$ such that $B(n,x)\subseteq U$.

$(2)$\, $X$ is called a {\it sequential space} \cite{FS} if for each non-closed subset $A\subseteq X$, there are a point $x\in X\setminus A$ and a sequence in $A$ converging to $x$ in $X$.

$(3)$\, $X$ is called a {\it Fr\'{e}chet-Urysohn space} \cite{FS} if for any subset $A\subseteq X$ and $x\in \overline{A}$, there is a sequence in $A$ converging to $x$ in $X$.

$(4)$\, $X$ is called a {\it strongly Fr\'{e}chet-Urysohn space} \cite{SF} if the following condition is satisfied:

(SFU) For each $x\in X$ and every sequence $\xi=\{A_{n}:n\in\mathbb{N}\}$ of subsets of $X$ such that $x\in\bigcap_{n\in\mathbb{N}}\overline{A_{n}}$, there exists a sequence $\eta=\{b_{n}:n\in\mathbb{N}\}$ in $X$ converging to $x$ and
intersecting infinitely many members of $\xi$.

$(5)$\, $X$ is called an {\it $\alpha_{4}$-space} \cite{Nt85}, if for every point $x\in X$ and each sheaf $\{S_{n}:n\in\mathbb{N}\}$ with the vertex $x$, there exists a sequence converging to $x$ which meets infinitely many sequences $S_{n}$.

$(6)$\, $X$ is called an {\it $\alpha_{7}$-space} \cite{BtZl04}, if for every point $x\in X$ and each sheaf $\{S_{n}:n\in\omega\}$ with the vertex $x$, there exists a sequence converging to some point $y\in X$ which meets infinitely many sequences $S_{n}$.
\end{definition}

\begin{definition}\cite{AA}
Let $\zeta$ be a family of non-empty subsets of a topological space $X$.

$(1)$\, $\zeta$ is called a {\it prefilter} on $X$ if whenever $P_{1}$ and $P_{2}$ are in $\zeta$, there exists $P\in\zeta$ such that $P\subseteq P_{1}\cap P_{2}$.

$(2)$\, A prefilter $\zeta$ on $X$ is said to {\it converge to a point} $x\in X$ if every open neighbourhood of $x$ contains an element
of $\zeta$.

$(3)$\, A prefilter $\zeta$ on $X$ is said to {\it accumulate to a point} $x\in X$ if $x$ belongs to the closure of each element of $\zeta$.

$(4)$\, Two prefilters $\zeta$ and $\eta$ on $X$ are said to be {\it synchronous} if, for any $P\in\eta$ and $Q\in\eta$, $P\cap Q\neq\emptyset$.

$(5)$\, $X$ is called a {\it bisequential space} if, for every prefilter $\zeta$ on $X$
accumulating to a point $x\in X$, there exists a countable prefilter $\eta$ on $X$ converging to the
same point $x$ such that $\zeta$ and $\eta$ are synchronous.
\end{definition}

The following figure lists some basic relationships of a class of weakly first countable spaces, see \cite{Lingxuewei}.

\begin{center}
\setlength{\unitlength}{1cm}
\begin{picture}(9,4)
 \thicklines
 \put(1.8,3.1){\makebox(0,0){bisequential}}
 \put(1.8,2.7){\makebox(0,0){space}}
 \put(2.9,2.9){\vector(1,0){0.5}}
 \put(5.6,3.1){\makebox(0,0){strongly Fr\'{e}chet-Urysohn}}
 \put(5.6,2.7){\makebox(0,0){space}}
 \put(7.8,2.9){\vector(1,0){0.5}}
 \put(5.6,2.6){\vector(0,-1){0.5}}
 \put(9.7,3.1){\makebox(0,0){Fr\'{e}chet-Urysohn}}
 \put(9.7,2.7){\makebox(0,0){space}}
 \put(9.7,2.6){\vector(0,-1){0.4}}
 \put(-1,2.4){\makebox(0,0){first-countable}}
 \put(-1,2.0){\makebox(0,0){space}}
 \put(0.3,2.4){\vector(1,1){0.5}}
 \put(0.3,2.4){\vector(1,-1){0.5}}
 \put(5.9,1.8){\makebox(0,0){$\alpha_{4}$-space}}
  \put(8.1,1.8){\makebox(0,0){$\alpha_{7}$-space}}
  \put(9.6,2.0){\makebox(0,0){sequential}}
  \put(9.6,1.6){\makebox(0,0){space}}
 \put(2.7,2.0){\makebox(0,0){weakly first-countable}}
 \put(2.7,1.6){\makebox(0,0){space}}
  \put(4.5,1.8){\vector(1,0){0.6}}
  \put(6.7,1.8){\vector(1,0){0.6}}
   \put(2.7,1.5){\line(0,-1){0.3}}
    \put(2.7,1.2){\line(1,0){7}}
    \put(9.7,1.2){\vector(0,1){0.3}}
 \put(4.3,0.5){\makebox(0,0){\quad The relationships among a class of weakly first-countable spaces}}
 \end{picture}
\end{center}
\vspace{-0.2cm}

\section{Quotient spaces with inner neutral strong subgyrogroups}

In this section, it is shown that if $G$ is a strongly topological gyrogroup, $H$ is a closed strong subgyrogroup of $G$ and $H$ is inner neutral, then the quotient space $G/H$ is first-countable if and only if $G/H$ is a bisequential space if and only if $G/H$ is a weakly first-countable space if and only if $G/H$ is a $csf$-countable and sequential $\alpha_{7}$-space.

\begin{definition}\cite{BX2022}
A subgyrogroup $H$ of a topological gyrogroup $G$ is called {\it strong subgyrogroup} if for any $x,y\in G$, we have $gyr[x,y](H)=H$.
\end{definition}

\begin{definition}\cite{BL1}
A subgyrogroup $H$ of a topological gyrogroup $G$ is called {\it admissible} if there exists a sequence $\{U_{n}:n\in \mathbb{N}\}$ of open symmetric neighborhoods of the identity $0$ in $G$ such that $U_{n+1}\oplus (U_{n+1}\oplus U_{n+1})\subseteq U_{n}$ for each $n\in \mathbb{N}$ and $H=\bigcap _{n\in \mathbb{N}}U_{n}$. If $G$ is a strongly topological gyrogroup with a symmetric neighborhood base $\mathscr U$ at $0$ and each $U_{n}\in \mathscr U$, we say that the admissible topological subgyrogroup is generated from $\mathscr U$.
\end{definition}

Obviously, every strong subgyrogroup is an $L$-subgyrogroup. Moreover, in a strongly topological gyrogroup with neighborhood base $\mathscr U$ of $0$, it is not difficult to see that each admissible subgyrogroup generated from $\mathscr U$ is a strong subgyrogroup. Moreover, the authors claimed that every strongly topological gyrogroup $G$ contains some strong subgyrogroups which are union-generated from open neighborhoods of the identity element by construction, see \cite[Proposition 3.11]{BX2022}.

\begin{lemma}\label{t00000}\cite{BL}
Let $G$ be a topological gyrogroup and $H$ an $L$-subgyrogroup of $G$. Then the natural homomorphism $\pi$ from a topological gyrogroup $G$ to its quotient topology on $G/H$ is an open and continuous mapping.
\end{lemma}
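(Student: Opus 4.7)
My plan is to prove the continuity and openness of $\pi$ separately. Continuity is immediate: by the definition of the quotient topology recalled just above, $O\subseteq G/H$ is declared open precisely when $\pi^{-1}(O)$ is open in $G$, so the preimage of every open set is automatically open. For openness, I fix an open $U\subseteq G$ and, via the quotient topology, reduce to showing $\pi^{-1}(\pi(U))$ is open in $G$. Since $H$ is an $L$-subgyrogroup, the cosets $\{a\oplus H:a\in G\}$ partition $G$ by \cite[Theorem 20]{ST}, so $g\oplus H=u\oplus H$ iff $g\in u\oplus H$; hence
\[
\pi^{-1}(\pi(U))=\bigcup_{u\in U}(u\oplus H)=\bigcup_{h\in H}(U\oplus h),
\]
where $U\oplus h:=\{u\oplus h:u\in U\}$ is the image of $U$ under the right translation $R_h:g\mapsto g\oplus h$. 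The task therefore reduces to showing that each $R_h$ is an open map.

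The main step, and what I expect to be the principal obstacle, is to verify that $R_h$ is in fact a homeomorphism of $G$. Its continuity is automatic from the joint continuity of $\oplus$. To produce a continuous inverse I would apply Lemma~\ref{a}(2) with $\ominus h$ in place of $y$, obtaining $(x\oplus h)\oplus\mbox{gyr}[x,h](\ominus h)=x$; setting $w=x\oplus h$ yields $x=w\oplus\mbox{gyr}[x,h](\ominus h)$. The delicate point is that the gyration appears to depend on the still unknown $x$, but this dependence is removed by invoking axiom (G4): $\mbox{gyr}[x,h]=\mbox{gyr}[x\oplus h,h]=\mbox{gyr}[w,h]$. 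This produces the closed-form expression
\[
R_h^{-1}(w)=w\oplus\mbox{gyr}[w,h](\ominus h),
\]
which is continuous in $w$ because $\oplus$ and $\ominus$ are continuous and, by Lemma~\ref{a}(4), the ternary map $(x,y,z)\mapsto\mbox{gyr}[x,y](z)$ is expressible entirely via $\oplus$ and $\ominus$ and is therefore continuous. So $R_h$ is a homeomorphism of $G$.

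Once this is in hand, each $R_h(U)=U\oplus h$ is open in $G$ for every $h\in H$, and consequently $\pi^{-1}(\pi(U))=\bigcup_{h\in H}(U\oplus h)$ is open as a union of open sets. By the definition of $\tau(G/H)$ this gives $\pi(U)\in\tau(G/H)$, completing the proof of openness.
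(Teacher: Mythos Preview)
Your argument is correct: continuity is immediate from the quotient topology, and openness follows once you establish that each right translation $R_h$ is a homeomorphism, which you do cleanly via Lemma~\ref{a}(2) combined with (G4) to identify the inverse, and Lemma~\ref{a}(4) to see continuity of the gyration term. Note that the paper itself does not supply a proof of this lemma; it is quoted from \cite{BL}, so there is no in-paper argument to compare against, but your proof is the standard one and would serve as a self-contained justification.
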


\begin{lemma}\label{homogeneous}\cite{BX2022}
Let $G$ be a strongly topological gyrogroup and $H$ a closed strong subgyrogroup of $G$. Then the family $\{\pi (x\oplus V):V\in \tau ,0\in U\}$ is a local base of the space $G/H$ at the point $x\oplus H\in G/H$, and $G/H$ is a homogeneous $T_{1}$-space.
\end{lemma}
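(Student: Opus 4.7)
The lemma asserts three things: the family $\{\pi(x\oplus V):V\in\tau,\ 0\in V\}$ is a local base at $x\oplus H$, the quotient space $G/H$ is $T_1$, and $G/H$ is homogeneous. The plan is to derive each by transporting the corresponding property from $G$ using Lemma \ref{t00000} (the natural projection $\pi$ is continuous and open) together with the strong-subgyrogroup hypothesis $\mbox{gyr}[x,y](H)=H$.

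For the local base, since $\pi$ is open and continuous, each $\pi(x\oplus V)$ is an open neighborhood of $x\oplus H=\pi(x)$. Conversely, if $W$ is any open neighborhood of $x\oplus H$ in $G/H$ then $\pi^{-1}(W)$ is open in $G$ and contains $x$. Joint continuity of $\oplus$ makes the left translation $L_x:y\mapsto x\oplus y$ continuous, and Lemma \ref{a}(1) yields $L_x^{-1}=L_{\ominus x}$, so $L_x$ is a self-homeomorphism of $G$. Hence there is an open neighborhood $V$ of $0$ with $x\oplus V=L_x(V)\subseteq \pi^{-1}(W)$, which gives $\pi(x\oplus V)\subseteq W$. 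The $T_1$ assertion is then immediate: $\pi^{-1}(\{a\oplus H\})=a\oplus H=L_a(H)$ is closed in $G$ because $H$ is closed and $L_a$ is a homeomorphism, so $\{a\oplus H\}$ is closed in $G/H$.

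For homogeneity, I will show that every left translation on $G$ descends to a self-homeomorphism of $G/H$. For $g\in G$, define $\tilde L_g(a\oplus H):=(g\oplus a)\oplus H$. Well-definedness is the delicate step and is exactly where strongness enters: by axiom $(G3)$, for every $h\in H$, $g\oplus(a\oplus h)=(g\oplus a)\oplus\mbox{gyr}[g,a](h)\in (g\oplus a)\oplus H$ because $\mbox{gyr}[g,a](H)=H$, so $g\oplus(a\oplus H)\subseteq(g\oplus a)\oplus H$, and the reverse inclusion follows from the same identity applied to $\mbox{gyr}[g,a]^{-1}(H)=H$. The relation $\tilde L_g\circ\pi=\pi\circ L_g$ gives continuity of $\tilde L_g$ via the quotient characterization of $\tau(G/H)$, while two applications of the left cancellation law show that $\tilde L_{\ominus g}$ is a two-sided inverse, so $\tilde L_g$ is in fact a homeomorphism. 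Given cosets $a\oplus H$ and $b\oplus H$, the equation $g\oplus a=b$ admits a unique solution in any gyrogroup, and the corresponding $\tilde L_g$ sends $a\oplus H$ to $b\oplus H$, establishing homogeneity. I expect the only nontrivial point to be the well-definedness of $\tilde L_g$: strongness of $H$ is precisely what makes left translation respect the coset partition, and the scheme collapses without it.
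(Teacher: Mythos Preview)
The paper does not supply its own proof of this lemma; it is quoted verbatim from \cite{BX2022}. Your argument is correct and is the expected one: it matches the way the paper subsequently \emph{uses} the result, where the homeomorphism $h_{a}(b\oplus H)=(a\oplus b)\oplus H$ is invoked explicitly (see the proof of Claim~1 in the theorem on $snf$-countability), and your well-definedness step via $\mbox{gyr}[g,a](H)=H$ is precisely the computation the paper performs elsewhere when manipulating cosets of a strong subgyrogroup.
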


A topological space $X$ is called a coset space if $X$ is homeomorphic to $G/H$, for some closed subgroup $H$ of a topological group $G$. It is well-known that every first-countable topological group is metrizable
by the Birkhoff-Kakutani theorem. However, the following example shows that it does not hold in coset spaces.

\begin{example}\label{countexample}\cite{FmSiTm19}
The two arrows space is a compact coset space which is first-countable, but not submetrizable.
\end{example}

Since every topological group is a strongly topological gyrogroup and each subgroup is a strong subgyrogroup, the Example \ref{countexample} shows that the axioms of first-countability is not equivalent with metrizability in the quotient space $G/H$, where $G$ is a strongly topological gyrogroup and $H$ is a closed strong subgyrogroup of $G$. However, if $G$ is a strongly topological gyrogroup with neighborhood base $\mathscr U$ of $0$ and $P$ is an admissible subgyrogroup generated from $\mathscr U$, if the quotient space $G/P$ is first-countable, then it is metrizable.

Indeed, in \cite{BL1}, Bao and Lin proved that if $G$ is a strongly topological gyrogroup with the symmetric neighborhood base $\mathscr{U}$ at $0$ and $P$ is an admissible $L$-subgyrogroup of $G$ generated from $\mathscr U$, then the left coset space $G/P$ is submetrizable, which means that the topology generated by a metric on $G/P$ is coarser than the quotient topology. Therefore, it suffices to show that if the the quotient space $G/P$ is first-countable, then the quotient topology is coarser than the topology generated by the same metric on $G/P$. However, for convenience of reading, we write the main process of proof about this result below.

\begin{theorem}\label{metri}
Let $G$ be a strongly topological gyrogroup with a symmetric neighborhood base $\mathscr U$ at $0$ and $P$ an admissible subgyrogroup generated from $\mathscr U$. If the quotient space $G/P$ is first-countable, then it is metrizable.
\end{theorem}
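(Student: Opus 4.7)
The plan is to leverage the Bao--Lin submetrizability theorem, which produces a metric $d$ on $G/P$ whose topology $\tau_d$ is coarser than the quotient topology $\tau_q$; hence, to conclude metrizability, it suffices to prove the reverse inclusion, and by homogeneity of $G/P$ (Lemma~\ref{homogeneous}) it is enough to verify $\tau_q \subseteq \tau_d$ at the identity coset $P = 0 \oplus P$.

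I would first fix a decreasing countable $\tau_q$-local base $\{W_n\}_{n \in \mathbb{N}}$ at $P$, together with the admissible sequence $\{U_n\} \subseteq \mathscr{U}$ defining $P$ (so $U_{n+1} \oplus U_{n+1} \oplus U_{n+1} \subseteq U_n$ and $P = \bigcap_n U_n$). Then I would refine to a new sequence $\{V_n\} \subseteq \mathscr{U}$ satisfying $P \subseteq V_n \subseteq U_n \cap \pi^{-1}(W_n)$ and $V_{n+1} \oplus V_{n+1} \oplus V_{n+1} \subseteq V_n$, chosen inductively using that $\mathscr{U}$ is a neighborhood base at $0$, the joint continuity of $\oplus$, and the gyration-invariance of members of $\mathscr{U}$ combined with the strong subgyrogroup property $\mathrm{gyr}[x,y](P)=P$. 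The refined sequence remains an admissible sequence generating $P$: the inclusion $V_n \subseteq U_n$ yields $\bigcap_n V_n \subseteq P$, while $P \subseteq V_n$ for every $n$ gives the reverse inclusion.

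The Bao--Lin prenorm construction applied to $\{V_n\}$ then yields a metric $d'$ on $G/P$ with $\tau_{d'} \subseteq \tau_q$ and having $\{\pi(V_n)\}_n$ as a local $\tau_{d'}$-base at $P$. Since $\pi(V_n) \subseteq W_n$ by construction, the $d'$-local base refines the quotient local base $\{W_n\}$, so $\tau_q \subseteq \tau_{d'}$ at $P$. Combined with the submetrizability $\tau_{d'} \subseteq \tau_q$, this gives $\tau_{d'} = \tau_q$ at $P$, and hence, by homogeneity, $\tau_{d'} = \tau_q$ throughout $G/P$; thus $d'$ is a compatible metric on $G/P$.

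The main obstacle lies in the refinement step. The delicate requirement is the simultaneous condition $P \subseteq V_n \in \mathscr{U}$ together with $V_n \subseteq U_n \cap \pi^{-1}(W_n)$ and the cubic admissibility $V_{n+1}^{\oplus 3} \subseteq V_n$: since elements of $\mathscr{U}$ are only open neighborhoods of $0$ and need not contain the whole (possibly large) subgyrogroup $P$, the inductive selection must exploit both the gyration-invariance of members of $\mathscr{U}$ and the strong subgyrogroup property of $P$, for instance by working with $P$-saturations $V \oplus P = \pi^{-1}(\pi(V))$ which interact correctly with the cubic admissibility condition. This is the crux of the argument and the place where the strong subgyrogroup hypothesis is used essentially.
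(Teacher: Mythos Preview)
Your strategy differs from the paper's. The paper does \emph{not} refine the admissible sequence at all: it builds the prenorm $N$ and the metric $\varrho$ directly from the given $\{U_n\}$, so that the $\varrho$-balls at $\pi(0)$ are controlled by the sets $\pi(U_n)$. First-countability of $G/P$ is invoked only at the end: a countable quotient-base $\{V_n\}$ at $\pi(0)$ pulls back to a countable base $\{\pi^{-1}(V_n)\}$ of saturated open neighborhoods of $P$ in $G$, and the paper argues that every such saturated neighborhood contains (a translate of) a $\varrho$-ball. Thus the paper trades your front-loaded refinement for a back-end ball-inclusion argument, and never has to produce a new admissible sequence sitting inside $\pi^{-1}(W_n)$.

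The obstacle you flag is real, and in fact slightly sharper than you state. Even if you pass to the saturations $V'\oplus P$ with $V'\in\mathscr U$ (which are indeed open, gyration-invariant, and contain $P$), one computes in a gyrogroup that $\ominus(V'\oplus P)=P\oplus V'$, which need not equal $V'\oplus P$; so the saturations are typically \emph{not} symmetric, while the Bao--Lin prenorm construction needs symmetric neighborhoods. Symmetrizing via $(V'\oplus P)\cap(P\oplus V')$ restores symmetry and still contains $P$, but then verifying the cubic condition $V_{n+1}^{\oplus 3}\subseteq V_n$ becomes a genuine gyrogroup computation mixing left and right $P$-translates. None of this is fatal, but it is exactly the work the paper's route sidesteps. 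What your approach buys, if you push it through, is a conceptually cleaner endgame: compatibility of $\tau_{d'}$ and $\tau_q$ is immediate from $\pi(V_n)\subseteq W_n$, with no separate ball-inclusion argument needed.
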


\begin{proof}
Let $\{U_{n}:n\in \mathbb{N}\}$ be a sequence of symmetric open neighborhoods of the identity $0$ in $G$ satisfying $U_{n}\in \mathscr U$ and $U_{n+1}\oplus (U_{n+1}\oplus U_{n+1})\subseteq U_{n}$, for each $n\in \mathbb{N}$, and such that $P=\bigcap _{n\in \mathbb{N}}U_{n}$. By \cite[Lemma 3.12]{BL}, there exists a continuous prenorm $N$ on $G$ which satisfies $$N(\mbox{gyr}[x,y](z))=N(z)$$ for any $x, y, z\in G$ and $$\{x\in G: N(x)<1/2^{n}\}\subseteq U_{n}\subseteq\{x\in G: N(x)\leq 2/2^{n}\},$$ for any $n\in \mathbb{N}$.

It is easy to show that $N(x)=0$ if and only if $x\in P$ and $N(x\oplus p)=N(x)$ for every $x\in G$ and $p\in P$. Then define a function $d$ from $G\times G$ to $\mathbb{R}$ by $d(x,y)=|N(x)-N(y)|$ for all $x,y\in G$. It is obvious that $d$ is continuous and $d$ is a pseudometric. Therefore, if $x'\in x\oplus P$ and $y'\in y\oplus P$, then there exist $p_{1},p_{2}\in P$ such that $x'=x\oplus p_{1}$ and $y'=y\oplus p_{2}$, then $$d(x', y')=|N(x\oplus h_{1})-N(y\oplus h_{2})|=|N(x)-N(y)|=d(x, y).$$ This enables us to define a function $\varrho $ on $G/P\times G/P$ by $$\varrho (\pi _{P}(x),\pi _{P}(y))=d(\ominus x\oplus y, 0)+d(\ominus y\oplus x, 0)$$ for any $x, y\in G$.

It is obvious that $\varrho $ is continuous, and $\varrho $ is a metric on $G/H$.

Finally, it suffices to verify that $\varrho$ generates the quotient topology of the space $G/H$. Given any points $x\in G$, $y\in G/H$ and any $\varepsilon>0$, we define open balls, $$B(x, \varepsilon)=\{x'\in G: d(x',x)<\varepsilon\}$$ and $$B^{*}(y, \varepsilon)=\{y'\in G/H: \varrho (y',y)<\varepsilon\}$$ in $G$ and $G/H$, respectively. Obviously, if $x\in G$ and $y=\pi _{P}(x)$, then we have $B(x, \varepsilon)=\pi ^{-1}_{P}(B^{*}(y, \varepsilon))$. Therefore, the topology generated by $\varrho$ on $G/P$ is coarser than the quotient topology.

Moreover, it follows from Lemma \ref{homogeneous} that the quotient space $G/P$ is homogenous. Since $G/P$ is first-countable, there exists a countable base $\mathcal{V}=\{V_{n}:n\in \mathbb{N}\}$ at $\pi (0)$ in $G/P$. As $\pi$ is an open and continuous mapping from $G$ onto $G/P$ by Lemma \ref{t00000}, $\pi(W)$ is an open neighborhood of $\pi(0)$ for arbitrary open neighborhood $W$ of $\pi^{-1}(\pi(0))$. We can find $V_{m}\in \mathcal{V}$ such that $\pi(0)\subseteq V_{m}\subseteq \pi(W)$. Then $\pi^{-1}(\pi(0))\subseteq \pi^{-1}(V_{m})\subseteq W$. Then $P$ has a countable character in $G$. Suppose that the preimage $O=\pi ^{-1}_{P}(Q)$ is open in $G$, where $Q$ is a non-empty subset of $G/P$. For every $y\in Q$, we have $\pi ^{-1}_{P}(y)=x\oplus P\subseteq O$, where $x$ is an arbitrary point of the fiber $\pi ^{-1}_{P}(y)$. Since $\{\pi^{-1}(V_{n}): n\in \omega\}$ is a base for $G$ at $P$, there exists $n\in \omega$ such that $x\oplus \pi^{-1}(V_{n})\subseteq O$. Then there exists $\delta>0$ such that $B(x, \delta)\subseteq x\oplus \pi^{-1}(V_{n})$. Therefore, we have $\pi ^{-1}_{P}(B^{*}(y, \delta))=B(x, \delta)\subseteq x\oplus \pi^{-1}(V_{n})\subseteq O$. It follows that $B^{*}(y, \delta)\subseteq Q$. So the set $Q$ is the union of a family of open balls in $(G/P, \varrho)$. Hence, $Q$ is open in $(G/P, \varrho)$, which proves that the metric and quotient topologies on $G/P$ coincide.
\end{proof}

By Example \ref{countexample} and Theorem \ref{metri}, we know that in a strongly topological gyrogroup $G$ with a symmetric neighborhood base $\mathscr U$ at $0$, even though the admissible subgyrogroup $P$ generated from $\mathscr U$ is a strong subgyrogroup and both of them can be a coset to define a partition of $G$, respectively, but the quotient spaces $G/P$ and $G/H$ have some different properties.

\begin{definition}\cite{BX2022}
A subgyrogroup $H$ of a topological gyrogroup $G$ is called {\it inner (outer) neutral} if for every open neighborhood $U$ of $0$ in $G$, there exists an open neighborhood $V$ of $0$ such that $H\oplus V\subseteq U\oplus H$ ($V\oplus H\subseteq H\oplus U$).
\end{definition}

It was proved in \cite{BX2022} that if $G$ is a strongly topological gyrogroup, $H$ is a closed strong subgyrogroup of $G$ and $H$ is inner neutral, then $G/H$ is first-countable if and only if $G/H$ is Fr\'echet-Urysohn with an $\omega^{\omega}$-base. Here, we continue to study some properties about the axioms of first-countability in the quotient spaces $G/H$, where $G$ is a strongly topological gyrogroup and $H$ is a inner neutral closed strong subgyrogroup of $G$.

\begin{theorem}
Suppose that $G$ is a strongly topological gyrogroup, $H$ is a closed strong subgyrogroup of $G$ and $H$ is inner neutral, then the followings are equivalent.

$(1)$\, $G/H$ is a sequential $\alpha_{4}$-space;

$(2)$\, $G/H$ is Fr\'{e}chet-Urysohn;

$(3)$\, $G/H$ is strongly Fr\'{e}chet-Urysohn.
\end{theorem}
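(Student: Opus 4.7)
The strategy is to prove the cycle $(3) \Rightarrow (2) \Rightarrow (1) \Rightarrow (3)$. The implication $(3) \Rightarrow (2)$ is immediate from the diagram in Section~2: every strongly Fr\'echet-Urysohn space is Fr\'echet-Urysohn. Likewise, the Fr\'echet-Urysohn condition trivially gives sequentiality, so for $(2) \Rightarrow (1)$ the only real work is to obtain the $\alpha_{4}$ property.

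For the $\alpha_{4}$ property, by the homogeneity of $G/H$ given in Lemma~\ref{homogeneous} it suffices to verify the condition at the point $\pi(0)$. Given a sheaf $\{S_{n}:n\in\mathbb{N}\}$ at $\pi(0)$, I would lift each sequence to $G$ using a section argument: pick a countable local base $\{U_{k}:k\in\mathbb{N}\}\subseteq\mathscr{U}$ of symmetric gyr-invariant neighborhoods of $0$, and for each $n$ choose points $x_{n,k}\in \pi^{-1}(S_{n})$ inside $U_{k}\oplus H$, which is possible because $\pi$ is open (Lemma~\ref{t00000}) and $\pi(U_{k})$ is a base at $\pi(0)$. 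The inner neutrality of $H$ guarantees that neighborhoods of $0$ in $G$ translate in a controlled way into neighborhoods of $\pi(0)$ in $G/H$, so a diagonal choice $k=k(n)$ tending to $\infty$ fast enough produces a sequence whose $\pi$-image converges to $\pi(0)$ and meets infinitely many $S_{n}$. Fr\'echet-Urysohn is invoked at the step of pulling back to actual convergent sequences inside $S_{n}$, and the gyration invariance $\mathrm{gyr}[x,y](U)=U$ removes the associativity obstruction that would otherwise spoil the estimate $x_{n,k}\in U_{k}\oplus H$.

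For $(1) \Rightarrow (3)$, given a decreasing sequence $\{A_{n}\}$ with $\pi(0)\in\bigcap_{n}\overline{A_{n}}$, I would first upgrade sequentiality to Fr\'echet-Urysohn at $\pi(0)$: the combination of sequentiality, $\alpha_{4}$, homogeneity, and the inner neutrality of $H$ mirrors the Nyikos-type argument for coset spaces of topological groups and lets one extract, for each $n$, a sequence $S_{n}\subseteq A_{n}$ converging to $\pi(0)$. The family $\{S_{n}\}$ is then a sheaf at $\pi(0)$, so the $\alpha_{4}$ hypothesis yields a sequence converging to $\pi(0)$ which meets infinitely many $S_{n}$, and therefore infinitely many $A_{n}$, establishing the \textup{(SFU)} condition. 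Once the result is verified at $\pi(0)$, homogeneity spreads it to every point of $G/H$.

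The main obstacle I expect is the extraction of sequences in $A_{n}$ converging to $\pi(0)$ when only sequentiality is available in $(1)$; sequentiality alone does not guarantee such extraction. Overcoming it requires the full use of the strong subgyrogroup structure (so that gyrations do not distort the chosen neighborhood base $\mathscr{U}$), the inner neutrality of $H$ (so that one-sided control $H\oplus V\subseteq U\oplus H$ converts estimates in $G$ into estimates in $G/H$), and the homogeneity of the quotient, exactly parallel to how Nyikos's theorem on Fr\'echet-Urysohn topological groups transfers to this gyrogroup setting.
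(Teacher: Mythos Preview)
Your cycle $(3)\Rightarrow(2)\Rightarrow(1)\Rightarrow(3)$ has a genuine gap in the step $(2)\Rightarrow(1)$. You write ``pick a countable local base $\{U_{k}:k\in\mathbb{N}\}\subseteq\mathscr{U}$ of symmetric gyr-invariant neighborhoods of $0$'', but nothing in the hypotheses makes $G$ first-countable. The whole point of the theorem is that $G/H$ need not be first-countable either; if $G$ had a countable base at $0$, then $G/H$ would be first-countable by the open quotient map (Lemma~\ref{t00000}) and all three conditions would hold trivially. Your diagonal argument for $\alpha_{4}$ rests entirely on this countable base, so the argument collapses in the non-trivial case. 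The paper avoids this by working in the opposite direction: it proves $(1)\Rightarrow(2)\Rightarrow(3)$ and takes $(3)\Rightarrow(1)$ from the diagram, so that $\alpha_{4}$ is never a conclusion to be established but always an available hypothesis.

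For $(1)\Rightarrow(2)$ the paper uses the standard $[[B]]\neq[B]$ contradiction: sequentiality forces a sequence $b_{n}\to\pi(0)$ in $[B]$, each $b_{n}$ has a sequence $b_{n}(j)\to b_{n}$ in $B$, and one translates back by $\ominus x_{n}$ so that the resulting sheaf $\{\pi((\ominus x_{n})\oplus x_{n}(j))\}_{j}$ lives at $\pi(0)$; then $\alpha_{4}$ gives a transversal, and inner neutrality (via \cite[Lemma~3.17]{BX2022}) lets one push the convergence back to the original points $b_{n_{k}}(j_{k})$. For $(2)\Rightarrow(3)$ the paper does not attempt a sheaf-at-$\pi(0)$ argument at all: it fixes an auxiliary non-trivial sequence $x_{n}\to\pi(0)$, shrinks each $A_{n}=\pi^{-1}(B_{n})$ into a small $V_{n}$, translates to $C_{n}=a_{n}\oplus A_{n}$, and uses Fr\'echet--Urysohn on $\pi(\bigcup C_{n})$; the separation $\pi(0)\notin\overline{\pi(C_{n})}$ forces the resulting sequence to meet infinitely many $\pi(C_{n})$, and again \cite[Lemma~3.17]{BX2022} converts this into (SFU). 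Your sketch of $(1)\Rightarrow(3)$ gestures at the Nyikos trick but, as you yourself note, you have no mechanism to extract convergent sequences from the $A_{n}$ using only sequentiality; the paper's route resolves this precisely by passing through $(2)$ first, where Fr\'echet--Urysohn supplies those sequences directly.
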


\begin{proof}
It suffices to prove that $(1)\Rightarrow (2)\Rightarrow (3)$. Suppose further that the space $G/H$ is non-discrete.

$(1)\Rightarrow (2)$.\, For  $A\subseteq G/H$, we write $[A]$  the set of all limit points of sequences in $A$. Suppose on the contrary that $G/H$ is not Fr\'{e}chet-Urysohn. There is a subset $B$ of $G/H$ such that $[B]\neq \overline{B}$. If $[B]$ is closed in $G/H$, then $\overline{B}\subseteq \overline{[B]}=[B]\subseteq\overline{B}$, which is a contradiction. Hence, $[B]$ is not closed in $G$. By the hypothesis, $G/H$ is sequential, so $[B]$ is not sequentially closed, that is $[[B]]\neq[B]$. Thus there is $b\in[[B]]\setminus[B]$. We may assume $b=\pi(0)$ without loss of generality, since $G/H$ is homogeneous by Lemma \ref{homogeneous}.

Let $\{b_{n}:n\in\mathbb{N}\}$ be a sequence of points of $[B]$ converging to $\pi(0)$. For each $n\in\mathbb{N}$, fix a point $x_{n}\in \pi^{-1}(b_{{n}})$. For each $b_{{n}}$, let $\{b_{{n}}(j):j\in\mathbb{N}\}$ be a sequence of points of $B$ converging to $b_{{n}}$. For each $j\in\mathbb{N}$, fix a point $x_{n}(j)\in \pi^{-1}(b_{{n}}(j))$. We claim $\lim_{j\to\infty}\pi((\ominus x_{n})\oplus x_{n}(j))=\pi(0)$.

Indeed, let $O$ be an open neighborhood of $\pi(0)$ in $G/H$, then there is an open neighborhood $U$ of $0$ in $G$ such that $\pi(U)\subseteq O$. Since $\lim_{j\to\infty}\pi(x_{n}(j))=\lim_{j\to\infty}b_{n}(j)=b_{n}=\pi(x_{n})$, there is $m\in\mathbb{N}$ such that $\pi(x_{n}(j))\in \pi(x_{n}\oplus U)$ for $j\geq m$. So $\pi((\ominus x_{n})\oplus x_{n}(j))\in \pi((\ominus x_{n})\oplus (x_{n}\oplus U))=\pi(U)\subseteq O$ for $j\geq m$. Hence $\lim_{j\to\infty}\pi((\ominus x_{n})\oplus x_{n}(j))=\pi(0)$.

Since $G/H$ is an $\alpha_{4}$-space, it is possible to pick $n_{k},j_{k}$ for each $k\in\mathbb{N}$ such that $\{\pi ((\ominus x_{n_{k}})\oplus x_{n_{k}}(j_{k})): k\in\mathbb{N}\}$ converges to $\pi (0)$ and $n_{k}<n_{k+1}$ for each $k\in\mathbb{N}$. It follows from \cite[Lemma 3.17]{BX2022} that $\lim_{k\to\infty}b_{n_{k}}(j_{k})=\lim_{k\to\infty}\pi (x_{n_{k}}(j_{k}))=\lim_{k\to\infty}\pi (x_{n_{k}}\oplus ((\ominus x_{n_{k}})\oplus x_{n_{k}}(j_{k})))=\pi (0)$,
this contradicts the assumption that $\pi (0)\not\in[B]$. Therefore, $G/H$ is Fr\'{e}chet-Urysohn.

$(2)\Rightarrow (3)$.\, It is enough to verify condition (SFU) for $\pi (0)$. Suppose that $\pi (0)\in\bigcap_{n\in\mathbb{N}}\overline{B_{n}}$, where each $B_{n}$ is a subset of $G/H$. Fix a sequence $\{x_{n}:n\in\mathbb{N}\}$ in $G/H\setminus\{\pi (0)\}$ converging to $\pi (0)$. For each $n\in\mathbb{N}$, fix a point $a_{n}\in \pi^{-1}(x_{n})$. Since $G$ is a strong topological gyrogroup and $H$ is inner neutral, we can fix symmetric open neighbourhoods $U_{n}$, $V_{n}$ of $0$ such that $\pi (a_{n})\not\in \pi (U_{n}\oplus U_{n})$, $V_{n}\subseteq U_{n}$ and $H\oplus V_{n}\subseteq U_{n}\oplus H$ for each $n\in\mathbb{N}$. Let $A_{n}=\pi^{-1}(B_{n})$. Since $\pi$ is open continuous by Lemma \ref{t00000}, $0\in \bigcap_{n\in\mathbb{N}}\pi^{-1}(\overline{B_{n}})=\bigcap_{n\in\mathbb{N}}\overline{\pi^{-1}({B_{n}})}=\bigcap_{n\in\mathbb{N}}\overline{A_{n}}$. Since $0\in \overline{A_{n}}$, we may assume that $A_{n}\subseteq V_{n}$, for each $n\in\mathbb{N}$ (otherwise, replace $A_{n}$ with the intersection $A_{n}\cap V_{n}$). Put $C_{n}=a_{n}\oplus A_{n}$, for $n\in\mathbb{N}$. From the choice of $V_{n}$ it is clear that $\pi (0)\not\in \overline{\pi (C_{n})}$, while $x_{{n}}\in \overline{\pi (C_{n})}$, for $n\in\mathbb{N}$. The last condition and the fact that $\{x_{{n}}:n\in\mathbb{N}\}$ converges to $\pi (0)$ implies that $\pi (0)\in \overline{\pi (C)}$, where $C=\bigcup\{C_{n}:n\in\mathbb{N}\}$.

Since the space $G/H$ is Fr\'{e}chet-Urysohn, there exists a sequence $\eta=\{d_{n}:n\in\mathbb{N}\}$ in $\pi (C)$ converging to $\pi (0)$. Since $\pi (0)$ is not in the closure of $\pi (C_{n})$, the sequence $\eta$ must intersect $\pi (C_{n})$ for infinitely many of $n$. For every $n\in\mathbb{N}$, choose $k_{n}\in\mathbb{N}$ such that $d_{n}\in \pi (C_{k_{n}})$. Hence, there exists $c_{n}\in C_{k_{n}}$ such that $\pi (c_{n})=d_{n}$ for each $n\in\mathbb{N}$. Put $b_{n}=(\ominus (a_{k_{n}}))\oplus c_{n}$, then  $\pi (b_{n})\in B_{k_{n}}$. It follows from \cite[Lemma 3.17]{BX2022} that $\lim_{n\to\infty}\pi (b_{n})=\pi (0)$. Thus, condition (SFU) is satisfied, and the space $G/H$ is strongly Fr\'{e}chet-Urysohn.
\end{proof}

\begin{lemma}\label{3yl1}
Let $G$ be a strongly topological gyrogroup and $H$ a closed strong subgyrogroup of $G$. If $U$ and $V$ are open neighborhoods of $0$ in $G$ with $(\ominus V)\oplus V\subseteq U$. Then $\overline{\pi (V)}\subseteq \pi (U)$.
\end{lemma}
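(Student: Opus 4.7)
The plan is to pull the desired inclusion back from $G/H$ to $G$ via $\pi$. Because $\pi$ is open and continuous by Lemma \ref{t00000}, we have $\pi^{-1}(\overline{\pi(V)})=\overline{\pi^{-1}(\pi(V))}=\overline{V\oplus H}$, so the conclusion reduces to the pointwise statement that every $z\in\overline{V\oplus H}$ lies in $U\oplus H=\pi^{-1}(\pi(U))$, where the last equality uses the identity $(u\oplus h)\oplus H=u\oplus H$ derived in the preamble from the $L$-subgyrogroup property.

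The central topological step, and the one I expect to take the most care, is the observation that for each $z\in G$ the set $V\oplus z$ is open. Joint continuity of $\oplus$ makes the right translation $R_z\colon y\mapsto y\oplus z$ continuous, and $w\mapsto w\ominus\mbox{gyr}[w,z](z)$ is a continuous two-sided inverse: Lemma \ref{a}(3) (applied with $x=w$ and $y=z$) verifies the inverse identity, while Lemma \ref{a}(4) expresses $\mbox{gyr}[\cdot,\cdot](\cdot)$ as a composition of the continuous operations $\oplus$ and $\ominus$. Hence $R_z$ is a homeomorphism of $G$, and $V\oplus z=R_z(V)$ is an open neighbourhood of $z=R_z(0)$. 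This is the point where the argument departs from the easy topological-group case, in which right translation is manifestly a homeomorphism.

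Given that neighbourhood, the closing step is a short gyrogroup calculation. Since $z\in\overline{V\oplus H}$, the intersection $(V\oplus z)\cap(V\oplus H)$ is nonempty, producing $v_1,v_2\in V$ and $h\in H$ with $v_1\oplus z=v_2\oplus h$. Left cancellation (Lemma \ref{a}(1)) gives $z=(\ominus v_1)\oplus(v_2\oplus h)$, and the gyroassociative law (G3) rewrites this as $z=((\ominus v_1)\oplus v_2)\oplus\mbox{gyr}[\ominus v_1,v_2](h)$. Since $H$ is a strong subgyrogroup, $\mbox{gyr}[\ominus v_1,v_2](h)\in H$, while $(\ominus v_1)\oplus v_2\in(\ominus V)\oplus V\subseteq U$. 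Therefore $z\in U\oplus H$, i.e., $\pi(z)\in\pi(U)$, as required.
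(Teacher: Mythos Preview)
Your proof is correct and follows essentially the same line as the paper's: both use that $V\oplus z$ is an open neighbourhood of $z$ to obtain an equation $v_1\oplus z=v_2\oplus h$, and then apply left cancellation, (G3), and the strong-subgyrogroup property to land in $U\oplus H$. The only cosmetic differences are that you first pull the closure back to $G$ via the identity $\pi^{-1}(\overline{\pi(V)})=\overline{V\oplus H}$, and that you spell out in detail why right translation is a homeomorphism, a fact the paper simply uses.
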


\begin{proof}
Take any $x\in G$ such that $\pi(x)\in \overline{\pi(V)}$. Since $V\oplus x$ is an open neighborhood of $x$, $\pi(V\oplus x)$ is an open neighborhood of $\pi(x)$ by Lemma \ref{t00000}. Then $\pi(V\oplus x)\cap \pi(V)\not=\emptyset$. We can find $a\in V$ and $b\in V$ such that $\pi (a\oplus x)=\pi(b)$, that is, $a\oplus x=b\oplus h$, for some $h\in H$. Then
\begin{eqnarray}
x&=&(\ominus a)\oplus (b\oplus h)\nonumber\\
&=&((\ominus a)\oplus b)\oplus \mbox{gyr}[\ominus a,b](h)\nonumber\\
&\in&((\ominus a)\oplus b)\oplus H\nonumber\\
&\subseteq& ((\ominus V)\oplus V)\oplus H\nonumber\\
&\subseteq& U\oplus H\nonumber.
\end{eqnarray}
Thus, $\pi (x)\in \pi (U\oplus H)=\pi (U)$, which means that $\overline{\pi (V)}\subseteq \pi (U)$.
\end{proof}

\begin{theorem}\label{regular}
Let $G$ be a strongly topological gyrogroup and $H$ a closed strong subgyrogroup of $G$. Then the quotient space $G/H$ is regular.
\end{theorem}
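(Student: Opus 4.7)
The plan is to reduce to checking regularity at the single point $\pi(0)$ and then assemble the pieces already available in the paper. By Lemma \ref{homogeneous}, the quotient space $G/H$ is homogeneous and $T_{1}$, so it suffices to exhibit, for every open neighborhood $O$ of $\pi(0)$ in $G/H$, an open neighborhood $W$ of $\pi(0)$ with $\overline{W}\subseteq O$; translating this property to any other coset by a self-homeomorphism of $G/H$ then yields regularity at every point.

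Given such an $O$, I would first lift it to $G$: by Lemma \ref{t00000} the map $\pi$ is continuous, so $\pi^{-1}(O)$ is an open neighborhood of $0$ in $G$ (it contains $H$, hence $0$). Using the joint continuity of $\oplus$ and the continuity of $\ominus$ in the strongly topological gyrogroup $G$, I can choose an open symmetric neighborhood $V$ of $0$ with $(\ominus V)\oplus V\subseteq \pi^{-1}(O)$. Setting $U=\pi^{-1}(O)$, this gives the hypothesis $(\ominus V)\oplus V\subseteq U$ required by Lemma \ref{3yl1}.

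Now Lemma \ref{3yl1} immediately yields $\overline{\pi(V)}\subseteq \pi(U)\subseteq O$, while openness of $\pi$ (Lemma \ref{t00000}) ensures that $W:=\pi(V)$ is an open neighborhood of $\pi(0)$ in $G/H$. Hence $\pi(0)\in W\subseteq \overline{W}\subseteq O$, and the regularity at $\pi(0)$ is established. Together with homogeneity, this completes the proof. I do not anticipate any substantive obstacle: all the ingredients (openness and continuity of $\pi$, homogeneity of $G/H$, existence of a symmetric $V$ with $(\ominus V)\oplus V\subseteq U$, and the closure containment from Lemma \ref{3yl1}) are already in place, and the argument is essentially the gyrogroup-theoretic analogue of the classical regularity proof for coset spaces of topological groups; the only point requiring slight care is the use of the strong subgyrogroup hypothesis, which is what allows Lemma \ref{3yl1} to be applied.
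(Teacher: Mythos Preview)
Your proposal is correct and follows essentially the same route as the paper's proof: reduce to regularity at $\pi(0)$ via the homogeneity of $G/H$ (Lemma \ref{homogeneous}), pull the given neighborhood back along the continuous map $\pi$, choose a symmetric $V$ with $(\ominus V)\oplus V\subseteq U$, and invoke Lemma \ref{3yl1} together with the openness of $\pi$ to conclude that $\overline{\pi(V)}\subseteq O$. The only cosmetic difference is that the paper picks $U$ with $\pi(U)\subseteq O$ rather than taking $U=\pi^{-1}(O)$ directly, and it cites \cite[Proposition~8]{AW} for the choice of $V$; your appeal to joint continuity accomplishes the same thing.
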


\begin{proof}
Let $W$ be an arbitrary open neighborhood of $\pi (0)$ in $G/H$. By Lemma \ref{t00000}, the natural quotient mapping $\pi$ is open and continuous, so there exists an open neighborhood $U$ of $0$ in $G$ such that $\pi (U)\subseteq W$. It follows from \cite[Proposition 8]{AW} that we can find an open neighborhood $V$ of $0$ such that $(\ominus V)\oplus V\subseteq U$. By Lemma \ref{3yl1}, $\overline{\pi(V)}\subseteq \pi (U)\subseteq W$. It is clear that $\pi (V)$ is an open neighborhood of $\pi (0)$ in $G/H$, so we obtain that $G/H$ is regular at the point $\pi (0)$. Moreover, it is verified that $G/H$ is homogeneous by Lemma \ref{homogeneous}, hence the quotient space $G/H$ is regular.
\end{proof}

\begin{theorem}
Suppose that $G$ is a strongly topological gyrogroup with neighborhood base $\mathscr U$ of $0$, $H$ is a closed strong subgyrogroup of $G$ and $H$ is inner neutral, if the space $G/H$ is bisequential, then $G/H$ is first-countable.
\end{theorem}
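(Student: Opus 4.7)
Since $G/H$ is homogeneous by Lemma \ref{homogeneous}, it suffices to construct a countable neighbourhood base at $\pi(0)$. My plan is to invoke the equivalence recalled from [BX2022] immediately before this theorem: under the present hypotheses $G/H$ is first-countable if and only if it is Fr\'echet--Urysohn and admits an $\omega^\omega$-base. As recorded in the diagram in Section~2, a bisequential space is strongly Fr\'echet--Urysohn and hence Fr\'echet--Urysohn, so the Fr\'echet--Urysohn half is automatic and the work reduces to producing an $\omega^\omega$-base of open neighbourhoods of $\pi(0)$.

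To assemble the countable data, first apply the definition of bisequentiality to the neighbourhood filter $\zeta_0$ of $\pi(0)$, which converges, and hence accumulates, to $\pi(0)$. This yields a countable prefilter $\eta=\{P_n:n\in\mathbb N\}$ on $G/H$ converging to $\pi(0)$ and synchronous with $\zeta_0$. Iterating the prefilter closure condition, one may assume $P_{n+1}\subseteq P_n$, so that every open neighbourhood of $\pi(0)$ contains some $P_n$ and $\pi(0)\in\overline{P_n}$ for every $n$. By the Fr\'echet--Urysohn property, fix sequences $(y_n^k)_{k\in\mathbb N}\subseteq P_n$ with $y_n^k\to\pi(0)$, and lift each to a point $x_n^k\in\pi^{-1}(y_n^k)\subseteq G$; then $\pi(x_n^k)\to\pi(0)$ as $k\to\infty$ for every fixed $n$.

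Now for each $f\in\omega^\omega$ build $U_f\in\mathscr U$ so that $U_g\subseteq U_f$ whenever $f\le g$ pointwise and the tail $\{x_n^{f(n)}:n\ge n_f\}$ lies in $U_f\oplus H$ for a threshold $n_f$ controlled by $f$; then set $V_f:=\pi(U_f)$, which is an open neighbourhood of $\pi(0)$ by Lemma \ref{t00000}. To verify that $\{V_f\}_{f\in\omega^\omega}$ really is a base, take any open $W\ni\pi(0)$, pick $n_0$ with $P_{n_0}\subseteq W$, and use the convergence of $(y_{n_0}^k)_k$ together with Theorem \ref{regular} (to shrink), Lemma \ref{3yl1} (to relate $\overline{\pi(V)}$ with $\pi(U)$), Proposition \ref{2mt1} (to keep the $U_f$'s gyration-invariant), and the inner neutrality of $H$ (to trade $U\oplus H$ against $H\oplus U'$ up to refinement in $\mathscr U$) in order to extract $f\in\omega^\omega$ with $V_f\subseteq W$.

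The main obstacle is the diagonal construction of $U_f$: one must simultaneously arrange monotonicity in $f$ and exhaustion of arbitrary neighbourhoods of $\pi(0)$. This is the step where the inner neutrality of $H$, combined with the gyration invariance of $\mathscr U$, plays the pivotal role, since it is exactly what allows $H$-cosets to interact cleanly with the topological neighbourhood filter coming from $\mathscr U$ and lets the $\omega^\omega$-indexing genuinely track open neighbourhoods in $G/H$.
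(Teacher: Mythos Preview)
Your reduction to Fr\'echet--Urysohn plus an $\omega^\omega$-base via the result from \cite{BX2022} is legitimate in principle, but the heart of the argument---the actual construction of the family $\{U_f\}_{f\in\omega^\omega}$---is never carried out. You write ``build $U_f\in\mathscr U$ so that $U_g\subseteq U_f$ whenever $f\le g$ and the tail $\{x_n^{f(n)}:n\ge n_f\}$ lies in $U_f\oplus H$'', but this is a list of desiderata, not a recipe: nothing in your data (a decreasing countable prefilter $\{P_n\}$ and some convergent sequences inside each $P_n$) singles out a particular $U_f$, and the condition on tails of the diagonal sequence has no evident bearing on whether $\{\pi(U_f)\}$ is a base. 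The verification paragraph then merely names Theorem~\ref{regular}, Lemma~\ref{3yl1}, Proposition~\ref{2mt1}, and inner neutrality without showing how they combine to force $V_f\subseteq W$ for some $f$. In particular, knowing $P_{n_0}\subseteq W$ and that $(y_{n_0}^k)_k$ converges to $\pi(0)$ gives you no control over an open neighbourhood of $\pi(0)$ lying inside $W$, because the $P_n$ are not neighbourhoods of $\pi(0)$.

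The paper's proof avoids the $\omega^\omega$ detour entirely and is both shorter and concrete. By regularity of $G/H$ (Theorem~\ref{regular}) together with \cite[Lemma~4.7.11]{AA}, bisequentiality yields a countable \emph{open} prefilter $\xi$ converging to $\pi(0)$. Setting $Q_P=\pi^{-1}(P)$ for $P\in\xi$, one then checks directly that the countable family $\gamma=\{\pi((\ominus Q_P)\oplus Q_P):P\in\xi\}$ is a base of open neighbourhoods of $\pi(0)$: given $O\ni\pi(0)$, choose $U,V\in\mathscr U$ with $V\oplus V\subseteq U$, $H\oplus(V\oplus V)\subseteq U\oplus H$ (inner neutrality), and $\pi(U)\subseteq O$; pick $P\in\xi$ with $P\subseteq\pi(V)$, so $Q_P\subseteq V\oplus H$, and a short gyrogroup computation gives $(\ominus Q_P)\oplus Q_P\subseteq (H\oplus V)\oplus(V\oplus H)\subseteq (U\oplus H)\oplus H$, whence $\pi((\ominus Q_P)\oplus Q_P)\subseteq\pi(U)\subseteq O$. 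This is exactly where inner neutrality and the gyration-invariance of $\mathscr U$ enter, and it delivers a countable base in one step rather than an $\omega^\omega$-indexed family.
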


\begin{proof}
Since the space $G/H$ is regular by Theorem \ref{regular} and is also bisequential, we can find a countable open prefilter $\xi$ on $G/H$ converging to $\pi (0)$ by \cite[Lemma 4.7.11]{AA}. Let $Q_{P}=\pi^{-1}(P)$ for each $P\in\xi$. Put $\gamma=\{\pi((\ominus Q_{P})\oplus Q_{P}):P\in\xi\}$. Then $\gamma$ is a base at $\pi(0)$ in $G/H$. Indeed, all elements of $\gamma$ are open in $G/H$ and contain $\pi(0)$. Let $O$ be an open neighborhood of $\pi(0)$. Take open symmetric neighborhoods $U,V\in \mathscr U$ in $G$ such that $V\oplus V\subseteq U$, $H\oplus (V\oplus V)\subseteq U\oplus H$ and $\pi(U)\subseteq O$. Since $\xi$ converges to $\pi(0)$, there exists $P\in\xi$ such that $P\subseteq\pi(V)$. It follows that $Q_{P}\subseteq V\oplus H$. Then
\begin{eqnarray}
\ominus Q_{P}&\subseteq &\ominus (V\oplus H)\nonumber\\
&=&\bigcup_{v\in V,h\in H}\{\ominus (v\oplus h)\}\nonumber\\
&=&\bigcup_{v\in V,h\in H}\{gyr[v,h](\ominus h\ominus v)\}\nonumber\\
&=&\bigcup_{v\in V,h\in H}\{gyr[v,h](\ominus h)\oplus gyr[v,h](\ominus v)\}\nonumber\\
&=&H\oplus V.\nonumber
\end{eqnarray}
Therefore, $0\in (\ominus Q_{P})\oplus Q_{P}\subseteq (H\oplus V)\oplus (V\oplus H)$ and
\begin{eqnarray}
(H\oplus V)\oplus (V\oplus H)&=&\bigcup_{v_{1},v_{2}\in V,h_{1},h_{2}\in H}\{(h_{1}\oplus v_{1})\oplus (v_{2}\oplus h_{2})\}\nonumber\\
&=&\bigcup_{v_{1},v_{2}\in V,h_{1},h_{2}\in H}\{((h_{1}\oplus v_{1})\oplus v_{2})\oplus gyr[h_{1}\oplus v_{1},v_{2}](h_{2})\}\nonumber\\
&=&\bigcup_{v_{1},v_{2}\in V,h_{1},h_{2}\in H}\{(h_{1}\oplus (v_{1}\oplus gyr[v_{1},h_{1}](v_{2})))\oplus gyr[h_{1}\oplus v_{1},v_{2}](h_{2})\}\nonumber\\
&=&(H\oplus (V\oplus V))\oplus H\nonumber\\
&\subseteq&(U\oplus H)\oplus H,\nonumber
\end{eqnarray}
hence, $\pi(0)\in\pi((\ominus Q_{P})\oplus Q_{P})\subseteq\pi((U\oplus H)\oplus H)=\pi (U)\subseteq O$. Hence, $G/H$ is first-countable.

\end{proof}

It follows from \cite[Corollary 1.3.10(2)]{linbook1} that every weakly first-countable Fr\'{e}chet-Urysohn space is first-countable, so it is clear that if $G$ is a strongly topological gyrogroup, $H$ is a inner neutral and closed strong subgyrogroup of $G$ and the space $G/H$ is weakly first-countable, then $G/H$ is first-countable. Next, we give the other type of methods to that the equivalence between the properties of weakly first-countable and first-countable in $G/H$.

\begin{definition}
Let $\mathscr{P}$ be a family of subsets of a space $X$ with $x\in\bigcap\mathscr{P}$.

$(1)$\, The family $\mathscr{P}$ is called a {\it network at $x$} \cite{E} if for each neighborhood $U$ of $x$ there exists $P\in\mathscr{P}$ such that $P\subseteq U$.

$(2)$\, The family $\mathscr{P}$ is called a {\it $cs$-network at $x$} \cite{Ls97} if for any sequence $L$ converging to $x$ and a neighborhood $U$ of $x$, there exists $P\in\mathscr{P}$ such that $L$ is eventually in $P$ and $P\subseteq U$.

$(3)$\, The family $\mathscr{P}$ is called an {\it $sn$-network at $x$} \cite{Ls96} if $\mathscr{P}$ is a network at $x$ and each element of $\mathscr{P}$ is a sequential neighborhood of $x$.

$(4)$\, The family $\mathscr{P}$ is called an {\it $so$-network at $x$} \cite{Ls96} if $\mathscr{P}$ is a network at $x$ and each element of $\mathscr{P}$ is a sequential open subset of $X$.

$(5)$\, A space $X$ is called {\it $csf$-countable} (resp., {\it $snf$-countable}, {\it $sof$-countable}) \cite{Ls96} if for each $x\in X$, there is a countable $cs$-network (resp., $sn$-network, $so$-network) at $x$.
\end{definition}

Note that in \cite{BtZl04}, $csf$-countable spaces and $snf$-countable spaces are called {\it spaces with countable $cs^{\ast}$-character} and {\it spaces with countable $sb$-character}, respectively. A family $\mathscr{P}$ of subsets of a space $X$ is called a {\it $cs$-network} if for each $x\in X$, $\{P\in\mathscr{P}:x\in P\}$ is a $cs$-network at $x$.

According to the process of the proof of \cite[Lemma 4.7.1]{AA} and \cite[Proposition 4.7.2]{AA}, we can easily obtain the following two lemmas:

\begin{lemma}\label{l31'}
Let $\{V_{n}(x):n\in\omega,x\in X\}$ and $\{W_{n}(x):n\in\omega,x\in X\}$ be two $sn$-networks on a Hausdorff space $X$. Then, for each $x\in X$ and each $n\in\omega$, there is $m\in\omega$ such that $W_{m}(x)\subseteq V_{n}(x)$.
\end{lemma}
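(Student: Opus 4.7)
The plan is to argue by contradiction, using the two defining features of an $sn$-network at a point: each member is a sequential neighborhood, and the family forms a network at the point. The only subtlety is that the definition in the paper does not require an $sn$-network to be closed under finite intersections, so I would first perform a harmless reduction to the ``decreasing'' case.

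First, I would replace, for each $x\in X$, the family $\{W_n(x):n\in\omega\}$ by the decreasing family $\{W_0(x)\cap W_1(x)\cap\cdots\cap W_n(x):n\in\omega\}$. Finite intersections of sequential neighborhoods of $x$ are again sequential neighborhoods of $x$ (the intersection of finitely many ``eventually in'' conditions is again such a condition), and this refined family is still a network at $x$, since any $P$ in the original family that sits inside a given open neighborhood $U\ni x$ contains some $W_k(x)$ with $W_0(x)\cap\cdots\cap W_k(x)\subseteq W_k(x)\subseteq P\subseteq U$. Thus, after this replacement, $\{W_n(x):n\in\omega\}$ remains an $sn$-network at $x$, and we may now assume $W_{n+1}(x)\subseteq W_n(x)$ for all $n\in\omega$.

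Next, fix $x\in X$ and $n\in\omega$, and suppose towards contradiction that $W_m(x)\not\subseteq V_n(x)$ for every $m\in\omega$. For each $m$ pick $w_m\in W_m(x)\setminus V_n(x)$. I claim that $w_m\to x$. Indeed, for every open neighborhood $U$ of $x$, the network property of $\{W_k(x):k\in\omega\}$ yields some $k_0\in\omega$ with $W_{k_0}(x)\subseteq U$, and by monotonicity $w_m\in W_m(x)\subseteq W_{k_0}(x)\subseteq U$ for all $m\geq k_0$. Hence $\{w_m\}$ converges to $x$. But $V_n(x)$ is a sequential neighborhood of $x$, so the sequence $\{w_m\}$ must eventually lie in $V_n(x)$, contradicting the fact that $w_m\notin V_n(x)$ for every $m$. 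This contradiction produces the required $m$ with $W_m(x)\subseteq V_n(x)$.

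The argument is short, and the only place requiring any care is the initial reduction to a decreasing family, which must preserve the $sn$-network property at every point; once that is in place, the contradiction driven by the defining property of a sequential neighborhood is routine. The Hausdorff hypothesis is not actually used in the argument and appears to serve only to match the standing convention of the paper.
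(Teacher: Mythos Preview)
Your argument is correct and is precisely the standard approach: the paper does not give its own proof but defers to \cite[Lemma 4.7.1]{AA}, whose argument (for weak bases) is exactly the contradiction you run, namely building a sequence $w_m\in W_m(x)\setminus V_n(x)$ that must converge to $x$ and hence eventually enter the sequential neighborhood $V_n(x)$. Your preliminary reduction to a decreasing family is the right way to make the convergence step clean, and your remark that the Hausdorff hypothesis is not actually invoked is accurate.
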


\begin{lemma}\label{l53}
Let $\{V_{n}(x):n\in\omega,x\in X\}$ be an $sn$-network on a homogeneous Hausdorff space $X$, and let $b$ be an element of $X$. Suppose further that, for each $x\in X$, $f_{x}$ is a homeomorphism of $X$ onto $X$ such that $f_{x}(b)=x$. Put $W_{n}(x)=f_{x}(V_{n}(b))$. Then $\{W_{n}(x):n\in\omega,x\in X\}$ is an $sn$-network on $X$.
\end{lemma}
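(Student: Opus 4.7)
The plan is to verify, for each $x \in X$, that the countable family $\{W_n(x) : n \in \omega\}$ is an $sn$-network at $x$; i.e., that it is a network at $x$ and that each $W_n(x)$ is a sequential neighborhood of $x$. Both conditions transport cleanly along the homeomorphism $f_x$, so the whole argument is essentially a change of variables via $f_x$, pushing the hypothesis at the fixed base point $b$ out to an arbitrary point $x$. Homogeneity enters only implicitly, through the assumed existence of the maps $f_x$.

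First I would check the network condition. Fix $x \in X$ and an open neighborhood $U$ of $x$. Since $f_x$ is continuous and $f_x(b) = x$, the preimage $f_x^{-1}(U)$ is an open neighborhood of $b$. By hypothesis $\{V_n(b) : n \in \omega\}$ is (in particular) a network at $b$, so there exists $n \in \omega$ with $V_n(b) \subseteq f_x^{-1}(U)$, whence $W_n(x) = f_x(V_n(b)) \subseteq U$. Note also that $b \in V_n(b)$, because every sequential neighborhood of $b$ contains $b$ (the constant sequence at $b$ must eventually lie in it); applying $f_x$ gives $x \in W_n(x)$, so each $W_n(x)$ is genuinely a neighborhood-candidate at $x$.

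Next I would show that each $W_n(x)$ is a sequential neighborhood of $x$. Let $\{y_k\}_{k \in \omega}$ be a sequence in $X$ converging to $x$. Since $f_x^{-1}$ is continuous, the image sequence $\{f_x^{-1}(y_k)\}_{k \in \omega}$ converges to $f_x^{-1}(x) = b$. Because $V_n(b)$ is a sequential neighborhood of $b$, there exists $k_0$ such that $f_x^{-1}(y_k) \in V_n(b)$ for all $k \geq k_0$; applying $f_x$ again yields $y_k \in f_x(V_n(b)) = W_n(x)$ for all $k \geq k_0$. This is exactly what is needed.

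I do not anticipate any serious obstacle; the proof is a routine pushforward of the $sn$-network structure from $b$ to $x$ along the homeomorphism $f_x$, and the Hausdorff hypothesis is not actually needed for this particular step (it is presumably used elsewhere, and invoked here only so that the conclusion is stated in a setting consistent with Lemma \ref{l31'}). The real content of the lemma is the \emph{conclusion} rather than its proof: a countable $sn$-network at a single distinguished point $b$ can be transported into a globally coherent $sn$-network across all of $X$, which is precisely the ingredient needed to combine with Lemma \ref{l31'} in the subsequent arguments.
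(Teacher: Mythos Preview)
Your proof is correct and is precisely the standard transport argument. The paper does not give its own proof of this lemma; it merely states that the result follows by the same reasoning as \cite[Proposition 4.7.2]{AA}, which is exactly the homeomorphism-pushforward verification you wrote out.
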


\begin{theorem}
Suppose that $G$ is a strongly topological gyrogroup with neighborhood base $\mathscr U$ of $0$, $H$ is a closed strong subgyrogroup of $G$ and $H$ is inner neutral, if the space $G/H$ is an $snf$-countable space, then $G/H$ is an $sof$-countable space.
\end{theorem}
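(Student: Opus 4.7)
The plan is to build, from the given countable $sn$-network at the coset $\pi(0)$, a countable $so$-network at $\pi(0)$, and then appeal to the homogeneity of $G/H$ established in Lemma~\ref{homogeneous} to propagate such a network to every other point, giving $sof$-countability of the whole space $G/H$.

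Starting from a decreasing countable $sn$-network $\{V_n:n\in\omega\}$ at $\pi(0)$, the natural candidate for an element of the $so$-network is the sequential interior
$$W_n=\{y\in V_n:V_n\text{ is a sequential neighborhood of }y\text{ in }G/H\}.$$
Since $V_n$ itself is a sequential neighborhood of $\pi(0)$, one has $\pi(0)\in W_n\subseteq V_n$; and because $\{V_n\}$ is already a network at $\pi(0)$, so is $\{W_n\}$: if $V_n\subseteq U$ for an open $U\ni\pi(0)$, then $W_n\subseteq U$ trivially. Thus the whole content of the theorem reduces to showing that each $W_n$ is sequentially open in $G/H$.

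To verify that, I would argue by contradiction. Take a sequence $\{y_k\}$ converging to $y\in W_n$ and assume, after passing to a subsequence, that $V_n$ is not a sequential neighborhood of any $y_k$; for each $k$ this provides a sequence $(z_k^{(j)})_{j\in\omega}$ in $(G/H)\setminus V_n$ with $z_k^{(j)}\to y_k$. The strategy is to translate the picture to $\pi(0)$: choosing $b\in\pi^{-1}(y)$, the left translation by $\ominus b$ descends to a homeomorphism $f_{\ominus b}$ of $G/H$, since the identity $x\oplus(a\oplus h)=(x\oplus a)\oplus\operatorname{gyr}[x,a](h)$ from Lemma~\ref{a} together with the strong-subgyrogroup condition $\operatorname{gyr}[x,a](H)=H$ forces $x\oplus(a\oplus H)=(x\oplus a)\oplus H$. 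By Lemma~\ref{l53} the image $f_{\ominus b}(V_n)$ is an $sn$-neighborhood of $\pi(0)$, so by Lemma~\ref{l31'} it contains some $V_m$; consequently the translated points $f_{\ominus b}(z_k^{(j)})$ lie outside $V_m$, while the translated $y_k$ converge to $\pi(0)$. Using the openness of $\pi$ (Lemma~\ref{t00000}), the inner neutrality of $H$ to choose symmetric $V\subseteq U$ in $\mathscr U$ with $H\oplus V\subseteq U\oplus H$, and the countable chain $\{V_l\}$ itself to stratify the lifted double array, the goal is to extract a single diagonal sequence $f_{\ominus b}(z_{k_i}^{(j_i)})\to\pi(0)$ lying entirely outside $V_m$, contradicting that $V_m$ is a sequential neighborhood of $\pi(0)$.

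The main obstacle is precisely this diagonal extraction: in an arbitrary topological space, even one with countable $sn$-character, a double convergent array need not admit a diagonal subsequence converging to the common limit, and typically an extra property such as $\alpha_4$ is needed. The hope is that the present hypotheses supply exactly that uniformity: the neighborhood base $\mathscr U$ at $0$ in the strongly topological gyrogroup $G$ furnishes a canonical descending family of symmetric open sets, the strong-subgyrogroup property makes every $H$-saturated such neighborhood descend to $G/H$ cleanly, and the inner-neutrality estimate $H\oplus V\subseteq U\oplus H$ converts an open cover of a coset into an open cover of $0$. Once $W_n$ is shown to be sequentially open, $\{W_n:n\in\omega\}$ is the desired countable $so$-network at $\pi(0)$, and Lemma~\ref{homogeneous} yields $sof$-countability at every point of $G/H$.
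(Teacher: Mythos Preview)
Your proposal has a genuine gap, and you identify it yourself: the diagonal extraction. You reduce everything to showing that the sequential interior $W_n$ of $V_n$ is sequentially open, set up the double array $z_k^{(j)}\to y_k\to y$, and then say ``the hope is that the present hypotheses supply exactly that uniformity''. But you never cash this in. Nothing in the hypotheses gives you an $\alpha_4$-type property for free, and translating to $\pi(0)$ via $f_{\ominus b}$ does not help: after translation you still face the same diagonalization problem at $\pi(0)$, with no extra leverage. The inner-neutrality estimate $H\oplus V\subseteq U\oplus H$ and the strong-subgyrogroup condition control how open sets push down to $G/H$, but they do not by themselves produce a convergent diagonal out of a double array.

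The paper avoids diagonalization altogether by replacing your $W_n$ with an \emph{algebraically} defined set
\[
B_n=\{x\in O_n(\pi(0)):\ \pi(a_x\oplus T_{O_k(\pi(0))})\subseteq O_n(\pi(0))\ \text{for some }k\},
\]
where $T_K=\pi^{-1}(K)$ and $O_n(x)=\pi(a_x\oplus T_{O_n(\pi(0))})$ is the translated $sn$-network from Lemma~\ref{l53}. The point is that membership in $B_n$ is witnessed by an entire translated $sn$-neighborhood sitting inside $O_n(\pi(0))$, and the crucial step is a \emph{doubling} lemma (Claim~1 in the paper): inner neutrality guarantees that the family $Q_m(x)=\pi\bigl(a_x\oplus(T_{O_m(\pi(0))}\oplus T_{O_m(\pi(0))})\bigr)$ is still a network at each point, hence an $sn$-network since $Q_m(x)\supseteq O_m(x)$. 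Then for $y\in B_n$ with witness $k$, Lemma~\ref{l31'} gives $m$ with $Q_m(\pi(0))\subseteq O_k(\pi(0))$, i.e.\ $T_{O_m}\oplus T_{O_m}\subseteq T_{O_k}$, and one checks that the whole sequential neighborhood $O_m(y)$ lies inside $B_n$. This shows $B_n$ is sequentially open directly, with no limit-of-limits argument. What you are missing is precisely this doubling trick; it is the place where inner neutrality actually does work, and it is what lets the paper bypass the $\alpha_4$ obstacle you ran into.
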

\begin{proof}
For each $x\in G/H$, fix a point $a_{x}\in G$ such that $\pi (a_{x})=x$, where $a_{\pi (0)}=0$. For each subset $K$ of $G/H$, put $T_{K}=\pi^{-1}(K)$.

Let $\{O_{n}(x):n\in\omega,x\in G/H\}$ be an $sn$-network on $G/H$. Put $Q_{n}(x)=\pi (a_{x}\oplus (T_{O_{n}(\pi (0))}\oplus T_{O_{n}(\pi (0))}))$ for each $x\in G/H$ and $n\in\omega$.

\medskip
{\bf Claim 1:} $\{Q_{n}(x):n\in\omega,x\in G/H\}$ is a network on $G/H$.

We can assume that $O_{n}(x)=\pi (a_{x}\oplus T_{O_{n}(\pi (0))})$ for each $x\in G/H$ and $n\in\omega$. Indeed, this follows from Lemma \ref{l53} since $h_{a}$ on $G/H$ given by the formula $h_{a}(b\oplus H)=(a\oplus b)\oplus H$, for each $a\in G$, is a homeomorphism of $G/H$ onto itself. Next, we only need prove that $\{Q_{n}(x):n\in\omega,x\in G/H\}$ is a network of $G/H$. Indeed, let $V$ be a neighborhood of a point $x\in G/H$. Since $G$ is a strongly topological gyrogroup and $H$ is inner neutral, there exists $W\in\mathscr U$ such that $\pi (a_{x}\oplus ((W\oplus H)\oplus (W\oplus H)))\subseteq V$. It follows from $\pi (0)\in \pi (W\oplus H)$ that there exists $n\in\omega$ such that $O_{n}(\pi (0))\subseteq \pi (W\oplus H)$. So $x\in Q_{n}(x)=\pi (a_{x}\oplus (T_{O_{n}(\pi (0))}\oplus T_{O_{n}(\pi (0))}))\subseteq V$, whence $\{Q_{n}(x):n\in\omega,x\in G/H\}$ is a network of $G/H$.

\medskip
{\bf Claim 2:} $O_{n}(\pi (0))$ contains a sequentially open neighbourhood of $\pi (0)$ for each $n\in\omega$.

Let $B_{n}$ be the set of all points $x\in O_{n}(\pi (0))$ such that $\pi (a_{x}\oplus T_{O_{k}(\pi (0))})\subseteq O_{n}(\pi (0))$, for some $k\in\omega$. Clearly, $\pi (0)\in B_{n}\subseteq O_{n}(\pi (0))$. We claim that the set $B_{n}$ is sequentially open in $G/H$. Indeed, take any $y\in B_{n}$. Then $\pi (a_{y}\oplus T_{O_{k}(\pi (0))})\subseteq O_{n}(\pi (0))$, for some $k\in\omega$. By Claim 1 and Lemma \ref{l31'}, there is $m\in\omega$ such that $\pi (T_{O_{m}(\pi (0))}\oplus T_{O_{m}(\pi (0))})\subseteq O_{k}(\pi (0))$. Then $\pi (a_{y}\oplus (T_{O_{m}(\pi (0))}\oplus T_{O_{m}(\pi (0))}))\subseteq \pi (a_{y}\oplus T_{O_{k}(\pi (0))})\subseteq O_{n}(\pi (0))$, which implies that $O_{m}(y)=\pi (a_{y}\oplus T_{O_{m}(\pi (0))})\subseteq B_{n}$. Since $y$ was an arbitrary point of $B_{n}$, it follows that $B_{n}$ is sequentially open in $G/H$.

Now it is clear that $\{B_{n}:n\in\omega\}$ is a countable $so$-network of $G/H$ at $\pi (0)$. Hence $G/H$ is an $sof$-countable space.
\end{proof}

\begin{corollary}
Suppose that $G$ is a strongly topological gyrogroup, $H$ is a closed strong subgyrogroup of $G$ and $H$ is inner neutral, if the space $G/H$ is weakly first-countable, then $G/H$ is a first-countable space.
\end{corollary}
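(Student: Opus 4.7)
The plan is to deduce the corollary from the immediately preceding theorem by showing that weak first-countability supplies both ingredients we need: $snf$-countability (to trigger the theorem and obtain $sof$-countability) and sequentiality (to convert a countable $so$-network into a countable local base).

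First, I would observe that any weakly first-countable space is automatically $snf$-countable. If $\{B(n,x):n\in\mathbb{N}\}$ is a decreasing weak base at $x$, then each $B(n,x)$ is a sequential neighborhood of $x$: given any sequence converging to $x$, the defining property of a weak base forces the sequence to be eventually in any open set containing $B(n,x)$, and in fact eventually in $B(n,x)$ itself by the weak-base characterization of open sets. Hence $\{B(n,x):n\in\mathbb{N}\}$ is a countable $sn$-network at $x$, so $G/H$ is $snf$-countable. The hypotheses of the preceding theorem are thus satisfied, so $G/H$ is $sof$-countable, i.e., at each point there is a countable network consisting of sequentially open subsets.

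Next, I would invoke the classical fact that every weakly first-countable space is sequential. Suppose for contradiction that $A\subseteq G/H$ is sequentially closed but not closed, and pick $x\in\overline{A}\setminus A$. Then no $B(n,x)$ can be contained in $(G/H)\setminus A$, so we may choose $a_n\in B(n,x)\cap A$ for each $n$. The sequence $\{a_n\}$ converges to $x$: by the weak-base property, every open neighborhood of $x$ contains some $B(m,x)$, and hence contains $a_k$ for all $k\ge m$. This contradicts the sequential closedness of $A$, and sequentiality follows.

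Finally, combining the two observations: in a sequential space, every sequentially open set is open. Therefore the countable $so$-network at each point of $G/H$, whose members are by definition sequentially open, consists of open sets and forms a countable base at that point. This is exactly first-countability of $G/H$. The only subtlety is verifying the standard folklore equivalences relating weak bases, $sn$-networks, and sequentiality; once these are in place the corollary follows immediately from the preceding theorem without any further use of the gyrogroup structure.
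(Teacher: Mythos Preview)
Your proposal is correct and matches the paper's intended derivation: the corollary is stated without proof immediately after the theorem that $snf$-countability implies $sof$-countability, and the paper itself invokes exactly the folklore equivalence you use (weakly first-countable $\Leftrightarrow$ sequential plus $snf$-countable) in the proof of the subsequent theorem. One minor slip in your sequentiality sketch: the point with $B(n,x)\cap A\neq\emptyset$ for all $n$ should be obtained from the failure of the weak-base openness criterion for $(G/H)\setminus A$, not picked as an arbitrary element of $\overline{A}\setminus A$, since weak-base sets need not be neighborhoods---but as you acknowledge, this is standard folklore and the fix is immediate.
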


A partially ordered set $(T,\leq)$ is called a {\it tree} if, for each $t\in T$, the set $\mathord{\downarrow} t=\{s\in T:s\leq t\}$ is well-ordered by the order $\leq$. Given an element $t\in T$, let $\mathord{\uparrow} t=\{s\in T:s\geq t\}$ and succ$(t)=\mbox{min}(\mathord{\uparrow} t\setminus\{t\})$ be the set of successors of $t$ in $T$. A maximal linearly ordered subset of a tree $T$ is called a {\it branch} of $T$. By max$(T)$ (min$(T)$) we denote the set of maximal (minimal) elements of a tree $T$. Under a {\it sequence tree} in a space $X$ we understand a tree $(T,\leq)$ such that (1) $T\subseteq X$; (2) $T$ has no infinite branch; (3) for each $t\not\in\mbox{max}(T)$, the set succ$(t)$ is countable and converges to $t$.

\begin{theorem}
Let $G$ be a strongly topological gyrogroup with neighborhood base $\mathscr U$ of $0$, $H$ a inner neutral and closed strong subgyrogroup of $G$. If the space $G/H$ is a $csf$-countable and sequential $\alpha_{7}$-space, then $G/H$ is first-countable.
\end{theorem}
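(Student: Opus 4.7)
The plan is to reduce to the preceding $snf$-countable $\Rightarrow$ $sof$-countable theorem via an upgrade from $cs$-networks to $sn$-networks. By Lemma \ref{homogeneous}, $G/H$ is homogeneous, so it suffices to produce a countable local base at $\pi(0)$. If I can show that $G/H$ is $snf$-countable at $\pi(0)$, then the preceding theorem gives a countable $so$-network at $\pi(0)$; since $G/H$ is sequential, every sequentially open set is open, so this $so$-network is in fact a countable local base, and first-countability follows.

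To upgrade the given countable $cs$-network $\{P_n : n\in\omega\}$ at $\pi(0)$ (which I may assume is decreasing and closed under finite intersection) to a family of sequential neighbourhoods, I would form refinements $Q_n$ using the strong neighborhood base $\mathscr U$ and inner neutrality of $H$, in the spirit of the sets $\pi(a_x\oplus(T_{O_n(\pi(0))}\oplus T_{O_n(\pi(0))}))$ constructed in the preceding $snf\Rightarrow sof$ proof. The goal is that $\{Q_n : n\in\omega\}$ forms a countable $sn$-network at $\pi(0)$.

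Suppose for contradiction that some $Q_{n_0}$ fails to be a sequential neighbourhood of $\pi(0)$. Using sequentiality together with homogeneity, I would build a sequence tree $T$ rooted at $\pi(0)$ entirely avoiding $Q_{n_0}$: at each non-maximal node $t$, the failure of $Q_{n_0}$ to be a sequential neighbourhood at $t$ (obtained by translating the failure at $\pi(0)$) supplies the converging sequence $\mathrm{succ}(t)$. The tree has no infinite branch, since an infinite branch would yield a sequence converging to $\pi(0)$ that must eventually lie in some $P_m\subseteq Q_{n_0}$ by the $cs$-network property applied to a neighbourhood of $\pi(0)$ contained in $Q_{n_0}$, contradicting the construction. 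The level-$n$ slices of $T$ through $\pi(0)$ then form a sheaf with vertex $\pi(0)$.

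Applying the $\alpha_7$-property to this sheaf produces a sequence $M$ converging to some point $y\in G/H$ and meeting infinitely many sheaf members. Homogeneity of $G/H$, together with Lemma 3.17 of \cite{BX2022}, inner neutrality of $H$, and the strong neighborhood base $\mathscr U$, allow a left-translation of $M$ through $G$ to produce a sequence $M'$ converging to $\pi(0)$ that is still frequently outside a suitable translate of $Q_{n_0}$; the $cs$-network property applied to $M'$ then forces $M'$ to lie eventually in some $P_m\subseteq Q_{n_0}$, a contradiction. The main obstacle will be this translation step: $\alpha_7$ only delivers convergence to some $y\neq \pi(0)$, and the nonassociativity of $\oplus$ together with the passage through the quotient map $\pi$ requires careful left-cancellation manipulations (in the style of Lemma \ref{3yl1} and \cite[Lemma 3.17]{BX2022}) to transport the ``frequently outside $Q_{n_0}$'' property back to a sequence at $\pi(0)$.
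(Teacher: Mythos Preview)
Your high-level reduction is right and matches the paper: show $G/H$ is $snf$-countable at $\pi(0)$, apply the preceding $snf\Rightarrow sof$ theorem, and use sequentiality to convert the $so$-network into a genuine local base. The substance, however, is entirely in the $cs\Rightarrow sn$ upgrade, and your sketch of that step has real gaps.

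First, your sequence-tree construction does not work as described. Knowing that $Q_{n_0}$ fails to be a sequential neighbourhood of $\pi(0)$ gives a sequence converging to $\pi(0)$ frequently outside $Q_{n_0}$; translating by a homeomorphism taking $\pi(0)$ to an arbitrary node $t$ only gives a sequence converging to $t$ frequently outside the \emph{translate} of $Q_{n_0}$, not outside $Q_{n_0}$ itself. So you cannot populate $\mathrm{succ}(t)$ as claimed. Second, your argument that the tree has no infinite branch is incorrect: in a tree where each $\mathrm{succ}(t)$ converges to $t$, an infinite branch does not yield a sequence converging to the root $\pi(0)$, so the $cs$-network cannot be invoked there. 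Third, ``level-$n$ slices'' of such a tree are not a sheaf at $\pi(0)$; they converge to level-$(n-1)$ nodes, not to $\pi(0)$. Finally, the translation of the $\alpha_7$ limit $y$ back to $\pi(0)$ is the crux and cannot be left as a vague appeal to homogeneity: the ``frequently outside $Q_{n_0}$'' information does not survive an arbitrary left translation.

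The paper avoids all of this by organising the contradiction differently. It first enlarges the $cs$-network $\mathcal A$ so that it is closed under finite unions, intersections, and the operation $\pi(T_{A_1}\oplus(\cdots\oplus T_{A_n})\cdots)$ (this is where inner neutrality is used). Assuming the sequential-neighbourhood subfamily $\mathcal A'$ is not a network, one fixes $U\in\mathscr U$ with $\mathcal A'\cap\{A:A\subseteq\pi(U)\}=\emptyset$, enumerates $\mathcal A''=\{A\in\mathcal A:A\subseteq\pi(U)\}$, and inductively builds sequences $L_k\to\pi(0)$ with $L_k$ missing the union $B_{m_{k-1}}$ of previously used members of $\mathcal A''$, inside a shrinking chain $U_k\in\mathscr U$. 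These $L_k$ \emph{themselves} form the sheaf at $\pi(0)$, so no translation is needed. One then sets $X=\bigcup_k L_k$ and splits into two cases. If $\pi(0)$ is isolated in $\overline X\setminus X$, then on a small closed neighbourhood the only possible limit of any sequence in $X$ is $\pi(0)$ (since $X$ is discrete), so $\alpha_7$ produces a sequence converging to $\pi(0)$ meeting infinitely many $L_k$; the $cs$-network then traps it in some $B_{m_k}$, contradicting the construction. If $\pi(0)$ is not isolated in $\overline X\setminus X$, sequentiality gives a nontrivial sequence in $\overline X\setminus X$ converging to $\pi(0)$, and the Banakh--Zdomsky\u\i\ lemma yields a sequence tree with $\max(T)\subset X$; walking down a finite branch and using the gyro-sum closure of $\mathcal A$ forces $\mathrm{succ}(t_n)$ into some $B_{m_k}$, hence into finitely many $L_j$'s, contradicting $t_n\neq\pi(0)$. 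The key ideas you are missing are (i) building the sheaf \emph{directly} at $\pi(0)$ from failed sequential neighbourhoods, (ii) the two-case split governed by the closure structure of $X$, and (iii) the closure of $\mathcal A$ under the iterated gyro-sum, which is what makes the tree argument terminate inside $\mathcal A''$.
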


\begin{proof}
Let $\mathcal{A}$ be a countable $cs$-network at $\pi (0)$ in $G/H$. Without loss of generality, we may assume that the family $\mathcal{A}$ is closed under finite unions and finite intersections. For each $x\in G/H$, fix $a_{x}\in G$ such that $\pi (a_{x})=x$, where $a_{\pi (0)}=0$. For each subset $K$ of $G/H$, put $T_{K}=\pi^{-1}(K)$. Since $G$ is a strongly topological gyrogroup and $H$ is inner neutral in $G$, we can assume that $\pi (T_{A_{1}}\oplus (T_{A_{2}}\oplus (\cdots \oplus (T_{A_{n-1}}\oplus T_{A_{n}}))\cdots )\in\mathcal{A}$ if $A_{i}\in\mathcal{A}$ for each $i\leq n$. It suffices to prove that the subcollection $\mathcal{A}'=\{A\in\mathcal{A}:A\mbox{~is a sequential neighborhood of~}\pi (0)\}$ of $\mathcal{A}$ is a network at $\pi (0)$ in $G/H$, which implies that $G/H$ has a countable $sn$-network at $\pi (0)$.

Assuming the contrary, then there exists $U\in\mathscr U$ such that $A\not\subseteq p(U)$ for any $A\in\mathcal{A}'$. Let $\mathcal{A}''=\{A\in\mathcal{A}:A\subseteq p(U)\}$. Then $\mathcal{A}'\cap\mathcal{A}''=\emptyset$. Since $\mathcal{A}''$ is a countable family, we may write $\mathcal{A}''=\{A_{n}:n\in\omega\}$. Put $B_{n}=\bigcup_{k\leq n}A_{k}$ for every $n\in\omega$. Note that $B_{n}\in\mathcal{A}''$ for every $n\in\omega$.

Let $m_{-1}=0$ and $U_{-1}=U$. Since $B_{0}$ is not a sequential neighborhood at $\pi (0)$ in $G/H$, there exists a sequence $L_{0}=\{x_{0,i}\}_{i\in\omega}$ in $G/H$ such that $L_{0}$ converging to $\pi (0)$ and $L_{0}\cap B_{0}=\emptyset$. Since the space $G/H$ is regular by Theorem \ref{regular}, we can choose $U_{0}\in\mathscr U$ such that $(U_{0}\oplus H)\oplus (U_{0}\oplus H)\subseteq U\oplus H$ and $\overline{\pi ((U_{0}\oplus H)\oplus (U_{0}\oplus H))}\subseteq \pi (U)$. It follows that there exists $m_{0}\in\omega$ such that $L_{0}$ is eventually in $A_{m_{0}}\subseteq \pi (U_{0})$.  Without loss of generality, we can assume that $L_{0}\subseteq A_{m_{0}}$. By induction, we can construct $L_{k}$,  $m_{k}$ and $U_{k}\in\mathscr U$ such that for each $k\in\omega$,

(\expandafter{\romannumeral1}) $m_{k}>m_{k-1}$, $L_{k}=\{x_{k,i}\}_{i\in\omega}$ converges to $\pi (0)$ and $L_{k}\subseteq p(U_{k})\setminus B_{m_{k-1}}$;

(\expandafter{\romannumeral2}) $L_{k}\subseteq A_{m_{k}}\subseteq B_{m_{k}}$;

(\expandafter{\romannumeral3}) $\overline{\pi (U_{k})}\cap\{x_{j,i}:j,i<k\}=\emptyset$ and $\overline{\pi ((U_{k}\oplus H)\oplus (U_{k}\oplus H))}\subseteq \pi (U_{k-1})$.

Let $X=\bigcup_{k\in\omega}L_{k}$, $Y=\overline{X}\setminus X$. Then $X$ is a discrete subspace of $G/H$. It follows that $Y$ is closed in $G/H$. Consider the following two cases.

{\bf Case 1.} $\pi (0)$ is an isolated point of $Y$.

We can find $W\in\mathscr U$ such that $\overline{p(W)}\cap Y=\{\pi (0)\}$. It follows that $X'=(X\cup\{\pi (0)\})\cap \overline{p(W)}=(X\cup Y)\cap \overline{p(W)}=\overline{X}\cap \overline{p(W)}$ is closed in $G/H$. For every $k\in\omega$ consider the sequence $S_{k}=L_{k}\cap \overline{p(W)}$ convergent
to $\pi (0)$. Since $G/H$ is an $\alpha_{7}$-space, there is a convergent sequence $S\subseteq G/H$ such that $S\cap S_{k}\neq\emptyset$ for infinitely many sequences $S_{k}$. Taking into account that $X'$ is a closed subspace of $G/H$ with $|X'\cap S|=\aleph_{0}$, we conclude that the limit point lim$S$ of $S$ belongs to the set $X'$. Moreover, we can assume that $S\subseteq X'$. Since the space $X$ is discrete, lim$S\in X'\setminus X=\{\pi (0)\}$. Thus the sequence $S$ converges to $\pi (0)$. Since $\mathcal{A}''$ is a $cs$-network at $\pi (0)$ in $G/H$, there is a number $k_{0}\in\omega$ such that $S$ is eventually in $B_{m_{k_{0}}}$. Then $S$ cannot meet infinitely many sequences $S_{k}$, this contradicts to the choice of $S$.

{\bf Case 2.} $\pi (0)$ is a cluster point of $Y$.

Since $Y$, as a closed subspace of $G/H$, is sequential, we may pick a non-trival sequence in $Y$ converging to $\pi (0)$. Then it follows from \cite[Lemma 1]{BtZl04} that there exists a sequential tree $T\subseteq \overline{X}$ such that min$(T)=\{\pi (0)\}$, max$(T)\subset X$ and succ$(\pi (0))\subseteq Y$. Let $t_{0}=\pi (0)$. Since $\mathcal{A}$ is a $cs$-network at $\pi (0)$ in $G/H$, there is $C_{0}\in\mathcal{A}$ such that $C_{0}\subseteq \pi (U_{0})$ and succ$(\pi (0))$ is eventually in $C_{0}$. Pick $t_{1}\in \mbox{succ}(t_{0})\cap C_{0}$. By induction, we can construct a finite branch $\{t_{i}:i\leq n+1\}$ of the tree $T$ and a sequence $\{C_{i}:i\leq n\}$ of elements of $\mathcal{A}$ such that succ$(t_{i})$ is eventually in $\pi (a_{t_{i}}\oplus T_{C_{i}})$, $C_{i}\subseteq \pi (U_{i})$ and $t_{i+1}\in \mbox{succ}(t_{i})\cap \pi (a_{t_{i}}\oplus T_{C_{i}})$ for each $i\leq n$. Note that the infinite set $M=\mbox{succ}(t_{n})\cap \pi (a_{t_{n}}\oplus T_{C_{n}})$ converges to the point $t_{n}\neq \pi (0)$. In addition, $M\subseteq \pi (a_{t_{n}}\oplus T_{C_{n}})\subseteq \pi (a_{t_{n-1}}\oplus (T_{C_{n-1}}\oplus T_{C_{n}}))\subseteq\cdots\subseteq \pi (a_{t_{1}}\oplus (T_{C_{1}}\oplus (\cdots\oplus (T_{C_{n-1}}\oplus T_{C_{n}})\cdots )\subseteq \pi (T_{C_{0}}\oplus (T_{C_{1}}\oplus (\cdots \oplus (T_{C_{n-1}}\oplus T_{C_{n}})\cdots )\subseteq \pi ((U_{0}\oplus H)\oplus ((U_{1}\oplus H)\oplus (\cdots \oplus ((U_{n-1}\oplus H)\oplus (U_{n}\oplus H))\cdots )\subseteq \pi (U)$. By our assumption on $\mathcal{A}$, $\pi (T_{C_{0}}\oplus (T_{C_{1}}\oplus (\cdots \oplus (T_{C_{n-1}}\oplus T_{C_{n}})\cdots )\in\mathcal{A}''$ and $\pi (T_{C_{0}}\oplus (T_{C_{1}}\oplus (\cdots \oplus (T_{C_{n-1}}\oplus T_{C_{n}})\cdots )\subseteq B_{m_{k}}$ for some $k$. Consequently, $M\subseteq X\cap B_{m_{k}}$ and $M\subseteq\{x_{j,i}:j\leq k,i\in\omega\}$ by the
item (\expandafter{\romannumeral1}) of the construction of $X$. Since $\pi (0)$ is a unique cluster point of the set $\{x_{j,i}:j\leq k,i\in\omega\}$, the sequence $M$ cannot
converge to $t_{n}\neq \pi (0)$, which is a contradiction.

Therefore, $G/H$ is $snf$-countable. Moreover, it is well-known that a topological space $X$ is weakly first-countable if and only if $X$ is sequential and $snf$-countable. Then we obtain that $G/H$ is weakly first-countable, hence a first-countable space. We complete the proof.
\end{proof}

Therefore, we conclude the following results.

\begin{corollary}
Suppose that $G$ is a strongly topological gyrogroup, $H$ is a closed strong subgyrogroup of $G$ and $H$ is inner neutral, then the followings are equivalent.

$(1)$\, $G/H$ is first-countable;

$(2)$\, $G/H$ is a bisequential space;

$(3)$\, $G/H$ is a weakly first-countable space;

$(4)$\, $G/H$ is a $csf$-countable and sequential $\alpha_{7}$-space.
\end{corollary}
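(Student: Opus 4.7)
The plan is to package the corollary by combining the easy direction $(1)\Rightarrow (2),(3),(4)$ with the three converse implications that are already established in the theorems immediately preceding it. The forward direction is routine once one writes down a decreasing countable local base at a point; the converse directions are the substantive content and have already been done.

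Concretely, suppose $G/H$ is first-countable at a point $x$ with a decreasing countable base $\{V_{n}:n\in\mathbb{N}\}$. For $(2)$, the countable prefilter $\{V_{n}:n\in\mathbb{N}\}$ converges to $x$, and any prefilter $\zeta$ accumulating to $x$ must meet every $V_{n}$ (since $x\in\overline{P}$ forces $V_{n}\cap P\neq\emptyset$ for every $P\in\zeta$), so the two prefilters are synchronous and $G/H$ is bisequential. For $(3)$, the assignment $B(n,x)=V_{n}$ is exactly the data required in the definition of a weakly first-countable space. For $(4)$, the family $\{V_{n}:n\in\mathbb{N}\}$ is a countable $cs$-network at $x$, while every first-countable space is automatically sequential and an $\alpha_{7}$-space (indeed even $\alpha_{4}$).

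The three converse implications are precisely the theorems proved above. The step $(2)\Rightarrow (1)$ is the bisequential-implies-first-countable theorem, where the family $\{\pi((\ominus Q_{P})\oplus Q_{P}):P\in\xi\}$ derived from a countable open prefilter $\xi$ converging to $\pi(0)$ is shown to be a base at $\pi(0)$ by means of the gyrogroup identities together with the inner-neutrality condition $H\oplus V\subseteq U\oplus H$. The step $(3)\Rightarrow (1)$ is the corollary obtained above, which chains $snf$-countability and $sof$-countability together with the standard fact that a space is weakly first-countable iff it is sequential and $snf$-countable. The step $(4)\Rightarrow (1)$ is the last theorem above, whose sequence-tree argument, combined with the case analysis on whether $\pi(0)$ is isolated in $Y=\overline{X}\setminus X$ or a cluster point of $Y$, converts a $csf$-countable sequential $\alpha_{7}$-quotient into an $snf$-countable (hence first-countable) one.

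Since all the technical work has been discharged, the corollary itself presents no genuine obstacle; the only remaining task is to verify the three trivial forward implications and then to cite the corresponding theorems for the converses. If any of the ingredients had to be singled out as the hardest, it would be $(4)\Rightarrow (1)$, whose proof genuinely relies on the full combination of inner-neutrality, the strongly topological gyrogroup structure, and the tree-plus-$\alpha_{7}$ machinery.
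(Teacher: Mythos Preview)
Your proposal is correct and matches the paper's own treatment: the corollary is stated without proof, introduced by ``Therefore, we conclude the following results,'' since the forward implications $(1)\Rightarrow(2),(3),(4)$ are standard (indeed recorded in the diagram of relationships earlier in the paper) and the three converses are precisely the content of the three theorems immediately preceding the corollary. Your write-up simply makes this packaging explicit.
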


\section{Quotient spaces with locally compact strong subgyrogroups}

In this section, it is proved that if $G$ is a strongly topological gyrogroup and $H$ is a locally compact strong subgyrogroup of $G$, then there exists an open neighborhood $U$ of the identity element $0$ such that $\pi (\overline{U})$ is closed in $G/H$ and the restriction of $\pi $ to $\overline{U}$ is a perfect mapping from $\overline{U}$ onto the subspace $\pi (\overline{U})$, which implies that if $H$ is a locally compact strong subgyrogroup of a strongly topological gyrogroup $G$ and the quotient space $G/H$ has some nice properties, such as locally compact, locally countably compact, locally pseudocompact, etc., then $G$ also has the same properties.

\begin{proposition}\label{closed}
If $H$ is a locally compact subgyrogroup of a topological gyrogroup $G$, then $H$ is closed in $G$.
\end{proposition}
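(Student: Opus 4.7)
The plan is to adapt the classical proof that a locally compact subgroup of a Hausdorff topological group is closed, taking care because right translations in a gyrogroup are not obviously homeomorphisms and because one cannot freely associate. The key will be to rely only on left translations (which are homeomorphisms by the left cancellation law, Lemma \ref{a}(1)) and on the joint continuity of $\oplus$ and continuity of $\ominus$.

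First, I would use local compactness of $H$ to pick a compact neighborhood $K$ of $0$ in $H$, and then choose an open set $W$ of $G$ with $0\in W\cap H\subseteq K$. Because $G$ is assumed Hausdorff and $K$ is compact in $G$, the set $K$ is in fact closed in $G$. Now take an arbitrary point $x\in\overline{H}$; I want to show $x\in H$. Fix a net $(h_\alpha)_{\alpha\in D}$ in $H$ with $h_\alpha\to x$. The map $(a,b)\mapsto(\ominus a)\oplus b$ is continuous, so $(\ominus h_\alpha)\oplus x\to (\ominus x)\oplus x=0$. Since $W$ is an open neighbourhood of $0$, we can choose an index $\alpha_0$ with $(\ominus h_{\alpha_0})\oplus x\in W$.

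Next, I would apply the left translation $L_{\ominus h_{\alpha_0}}\colon y\mapsto(\ominus h_{\alpha_0})\oplus y$, which is continuous (and in fact a homeomorphism, with inverse $L_{h_{\alpha_0}}$ by Lemma \ref{a}(1)). Since $h_\alpha\to x$, continuity gives $(\ominus h_{\alpha_0})\oplus h_\alpha\to(\ominus h_{\alpha_0})\oplus x\in W$; because $W$ is open, the net $(\ominus h_{\alpha_0})\oplus h_\alpha$ is eventually in $W$. But $H$ is a subgyrogroup, so $(\ominus h_{\alpha_0})\oplus h_\alpha\in H$ for every $\alpha$, hence it lies eventually in $W\cap H\subseteq K$. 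Because $K$ is closed in $G$, the limit point satisfies $(\ominus h_{\alpha_0})\oplus x\in K\subseteq H$.

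Finally, I would close the argument using the left cancellation law:
\[
x=h_{\alpha_0}\oplus\bigl((\ominus h_{\alpha_0})\oplus x\bigr)\in h_{\alpha_0}\oplus H\subseteq H,
\]
because $H$ is closed under $\oplus$. Thus $\overline{H}\subseteq H$, as desired. The main conceptual obstacle is resisting the temptation to repeat the group-theoretic ``$xV\cap H\subseteq h_0K$'' computation, which would drag in gyrations $\operatorname{gyr}[\,\cdot\,,\,\cdot\,]$ acting on elements outside $H$ that we cannot control; bypassing this by applying the continuous map $L_{\ominus h_{\alpha_0}}$ directly to the convergent net and exploiting the symmetry of left cancellation (which relates $(\ominus h_{\alpha_0})\oplus x$ back to $x$ without any gyration appearing) is what makes the argument go through in the gyrogroup setting.
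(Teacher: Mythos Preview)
Your proof is correct. Each step checks out: the compact neighbourhood $K$ of $0$ in $H$ is closed in the Hausdorff space $G$; the map $a\mapsto(\ominus a)\oplus x$ is continuous, so $(\ominus h_\alpha)\oplus x\to 0$; the left translation $L_{\ominus h_{\alpha_0}}$ is a homeomorphism with inverse $L_{h_{\alpha_0}}$ by Lemma~\ref{a}(1); the net $(\ominus h_{\alpha_0})\oplus h_\alpha$ lies in $H$ because a subgyrogroup is closed under $\ominus$ and $\oplus$, and its limit lies in the closed set $K\subseteq H$; finally $x=h_{\alpha_0}\oplus((\ominus h_{\alpha_0})\oplus x)\in H$ again by Lemma~\ref{a}(1).

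Your route, however, is genuinely different from the paper's. The paper argues at a higher level: it invokes the fact (from \cite{AW}) that $\overline{H}$ is again a subgyrogroup, then uses the general topological fact that a dense locally compact subspace of a Hausdorff space is open, so $H$ is open in $\overline{H}$; since an open subgyrogroup of a topological gyrogroup is closed, $H$ is closed and dense in $\overline{H}$, hence $H=\overline{H}$. This is short but leans on two external ingredients (closure of a subgyrogroup is a subgyrogroup, and open subgyrogroups are closed), the latter of which itself needs a small gyrogroup-algebra argument. Your approach is longer but entirely self-contained: it uses only continuity of $\oplus$, $\ominus$, the left cancellation law, and basic compactness, and it sidesteps any need to analyse cosets or gyrations outside $H$. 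Either argument is perfectly acceptable; yours has the advantage of not requiring the reader to chase references.
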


\begin{proof}
We show that $H=\overline{H}$ in $G$. By \cite[Proposition 7]{AW}, $\overline{H}$ is a subgyrogroup of $G$. Since $H$ is a dense locally compact subspace of $\overline{H}$, we know that $H$ is open in $\overline{H}$. Therefore, $H$ is closed, which means that $H=\overline{H}$. Therefore, $H$ is a closed subgyrogroup in $G$.
\end{proof}

\begin{lemma}\label{3subset}
Let $G$ be a topological gyrogroup. For each open neighborhood $U$ of $0$ and $x\in G$, there exists an open neighborhood $V$ of $0$ such that $V\oplus x\subseteq x\oplus U$.
\end{lemma}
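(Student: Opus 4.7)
My plan is to exploit the joint continuity of $\oplus$ together with the standard fact that, in any topological gyrogroup, left translation is a homeomorphism. First I would observe that the map $L_{x}\colon G\to G$ defined by $L_{x}(y)=x\oplus y$ is a homeomorphism of $G$ onto itself: it is continuous as a partial application of $\oplus$, and its inverse is $L_{\ominus x}$. To verify the latter, the left cancellation law (Lemma~\ref{a}(1)) gives $L_{\ominus x}\circ L_{x}=\mathrm{id}$, while the identity $\mbox{gyr}[x,\ominus x]=\mathrm{id}$ (a standard consequence of $G3$ and $G1$) together with $G3$ yields $L_{x}(L_{\ominus x}(z))=x\oplus((\ominus x)\oplus z)=(x\oplus(\ominus x))\oplus\mbox{gyr}[x,\ominus x](z)=z$; since $L_{\ominus x}$ is itself continuous, $L_{x}$ is a homeomorphism. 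In particular, $x\oplus U$ is an open neighborhood of the point $x=x\oplus 0$ in $G$.

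Next I would introduce the continuous map $R_{x}\colon G\to G$ defined by $R_{x}(v)=v\oplus x$, whose continuity is immediate from the joint continuity of $\oplus$. Because $R_{x}(0)=0\oplus x=x\in x\oplus U$, the preimage
\[
V:=R_{x}^{-1}(x\oplus U)
\]
is an open subset of $G$ containing $0$. By construction $V\oplus x=R_{x}(V)\subseteq x\oplus U$, which is exactly the required inclusion. Thus $V$ is the open neighborhood of $0$ we are seeking.

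There is no real obstacle to this argument; the only point that is not completely automatic is the homeomorphism property of $L_{x}$, which rests on the gyrogroup identity $\mbox{gyr}[x,\ominus x]=\mathrm{id}$ so that $L_{\ominus x}$ is a genuine two-sided inverse of $L_{x}$ rather than merely a left inverse. Both ingredients are standard in the topological-gyrogroup literature and can be cited from \cite{AW}, so the proof essentially reduces to invoking openness of $x\oplus U$ and continuity of right translation by $x$.
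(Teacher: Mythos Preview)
Your argument is correct and follows essentially the same route as the paper's proof: both use continuity of the right translation $R_{x}$ together with the fact that $x\oplus U$ is an open neighborhood of $x$ (equivalently, that $L_{x}$ is a homeomorphism with inverse $L_{\ominus x}$). The paper phrases this as two successive pull-backs---first finding $W$ with $(\ominus x)\oplus W\subseteq U$ via $L_{\ominus x}$, then $V$ with $V\oplus x\subseteq W$ via $R_{x}$---whereas you combine them into the single preimage $V=R_{x}^{-1}(x\oplus U)$, but the content is identical.
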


\begin{proof}
For each open neighborhood $U$ of $0$ and $x\in G$, define $L_{\ominus x}(y):G\rightarrow G$ by $L_{\ominus x}(y)=\ominus x\oplus y$ and $R_{x}:G\rightarrow G$ by $R_{x}(y)=y\oplus x$. For each open neighborhood $U$ of $0$, we can find an open neighborhood $W$ of $x$ such that $L_{\ominus x}(W)=(\ominus x)\oplus W\subseteq U$, as $L_{\ominus x}$ is continuous. By the same method, we can find an open neighborhood $V$ of $0$ such that $R_{x}(V)=V\oplus x\subseteq W$. Then $(\ominus x)\oplus (V\oplus x)\subseteq U$, which means that $V\oplus x\subseteq x\oplus U$.
\end{proof}

\begin{theorem}\label{disjoint}
Let $G$ be a strongly topological gyrogroup with neighborhood base $\mathscr U$ of $0$, $F$ a compact subset of $G$, and $P$ a closed subset of $G$ such that $F\cap P=\emptyset$. Then there exists an open neighborhood $V$ of the identity element $0$ such that $(F\oplus V)\cap P=\emptyset$ and $(V\oplus F)\cap P=\emptyset$.
\end{theorem}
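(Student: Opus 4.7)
My plan is to run a standard Wallace-type compactness argument using only the joint continuity of $\oplus$, handling the two containments in parallel and then intersecting the resulting neighborhoods. I anticipate that the strong gyrogroup hypothesis (invariance of $\mathscr U$ under gyrations) will not actually be needed for this statement.

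For each $x\in F$, since $P$ is closed and $x=x\oplus 0\in G\setminus P$, joint continuity of $\oplus\colon G\times G\to G$ at the point $(x,0)$ gives a basic open rectangle inside the preimage of $G\setminus P$: that is, an open neighborhood $W_x$ of $x$ and a base element $U_x\in\mathscr U$ with $W_x\oplus U_x\subseteq G\setminus P$. By the symmetric argument applied at $(0,x)$, there is also an open neighborhood $W'_x$ of $x$ and some $U'_x\in\mathscr U$ with $U'_x\oplus W'_x\subseteq G\setminus P$.

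Compactness of $F$ then yields finite subcovers $F\subseteq\bigcup_{i=1}^{n}W_{x_i}$ and $F\subseteq\bigcup_{j=1}^{m}W'_{x'_j}$. I set
$$V=\bigcap_{i=1}^{n}U_{x_i}\,\cap\,\bigcap_{j=1}^{m}U'_{x'_j},$$
which is an open neighborhood of $0$. For any $y\in F$ and $v\in V$, picking $i,j$ with $y\in W_{x_i}\cap W'_{x'_j}$ yields $y\oplus v\in W_{x_i}\oplus U_{x_i}\subseteq G\setminus P$ and $v\oplus y\in U'_{x'_j}\oplus W'_{x'_j}\subseteq G\setminus P$, establishing both $(F\oplus V)\cap P=\emptyset$ and $(V\oplus F)\cap P=\emptyset$.

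I do not foresee a genuine obstacle; the whole argument is driven by continuity at a single point together with finite covers. An alternative ``translation'' route, writing $y=x_i\oplus u$ with $u\in U_{x_i}$ and trying to bound $v\oplus y=(v\oplus x_i)\oplus\mathrm{gyr}[v,x_i](u)$ via the gyro-associative laws of Lemma \ref{a}, would force one to invoke the strong gyrogroup structure to control the gyration factor; but the direct continuity route above sidesteps that complication entirely, so the only small thing to check is the standard fact that a point in an open set in a product topology lies in a basic rectangle.
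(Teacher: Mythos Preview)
Your argument is correct. It is, however, a genuinely different route from the paper's.

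The paper covers $F$ by left translates $x\oplus O_{x}$ with $O_{x}\in\mathscr U$, and for a point $y\in x\oplus O_{x}$ it reduces $y\oplus Q$ to $(x\oplus O_{x})\oplus O_{x}$ and then applies the gyro-associative law $(x\oplus s)\oplus t = x\oplus(s\oplus\mathrm{gyr}[s,x](t))$ together with the strong-gyrogroup invariance $\mathrm{gyr}[s,x](O_{x})=O_{x}$ to land inside $x\oplus W_{x}\subseteq G\setminus P$. For the left-sided inclusion $(V\oplus F)\cap P=\emptyset$ the paper first invokes Lemma~\ref{3subset} to pass from $U_{y}\oplus y$ to $y\oplus Q_{1}$ and repeats a similar gyro-computation. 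Thus the paper's proof genuinely uses the ``strongly'' hypothesis at several points---exactly the translation route you anticipated and deliberately avoided. Your direct Wallace-type argument, working with basic rectangles in $G\times G$ at the points $(x,0)$ and $(0,x)$, bypasses all gyration bookkeeping; it uses only the joint continuity of $\oplus$ and therefore proves the theorem for an arbitrary topological gyrogroup (indeed for any Hausdorff space carrying a jointly continuous binary operation with a two-sided identity). So your proof is both shorter and strictly more general; the paper's approach buys nothing additional here.
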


\begin{proof}
For each $x\in F$, we can find an open neighborhood $W_{x}$ of $0$ in $G$ such that $(x\oplus W_{x})\cap P=\emptyset$. Then choose an open neighborhood $O_{x}\in \mathscr U$ with $O_{x}\oplus O_{x}\subseteq W_{x}$. Since $F$ is compact and $F\subseteq \bigcup_{x\in F}\{x\oplus O_{x}\}$, we can find a finite subset $C\subseteq F$ such that $F\subseteq\bigcup_{x\in C}\{x\oplus O_{x}\}$. Put $Q=\bigcap_{x\in C}O_{x}$. For each $y\in F$, there exists $x\in C$ with $y\in x\oplus O_{x}$. Then $y\oplus Q\subseteq (x\oplus O_{x})\oplus Q\subseteq (x\oplus O_{x})\oplus O_{x}=\bigcup_{s,t\in O_{x}}\{(x\oplus s)\oplus t\}=\bigcup_{s,t\in O_{x}}\{x\oplus (s\oplus \mbox{gyr}[s,x](t))\}=x\oplus (O_{x}\oplus O_{x})\subseteq x\oplus W_{x}\subseteq G\setminus P$. Therefore, $(F\oplus Q)\cap P=\emptyset$.

On the other hand, choose $Q_{1}\in \mathscr U$ such that $Q_{1}\oplus Q_{1}\subseteq Q$. By Lemma \ref{3subset}, for each $y\in F$, we can find an open neighborhood $U_{y}$ of $0$ such that $U_{y}\oplus y\subseteq y\oplus Q_{1}$. Then $F\subseteq \bigcup_{y\in F}\{U_{y}\oplus y\}$. Since $F$ is compact, there is a finite set $E\subseteq F$ such that $F\subseteq \bigcup_{y\in E}\{U_{y}\oplus y\}$. Put $U=\bigcap_{y\in E}U_{y}$. For each $t\in F$, we can find $y\in E$ with $t\in U_{y}\oplus y$. Then
\begin{eqnarray}
U\oplus t&\subseteq&U\oplus (U_{y}\oplus y)\nonumber\\
&\subseteq&U\oplus (y\oplus Q_{1})\nonumber\\
&=&\bigcup\{u\oplus (y\oplus q):u\in U,q\in Q_{1}\}\nonumber\\
&=&\bigcup\{(u\oplus y)\oplus \mbox{gyr}[u,y](q):u\in U,q\in Q_{1}\}\nonumber\\
&=&(U\oplus y)\oplus Q_{1}\nonumber\\
&\subseteq&(U_{y}\oplus y)\oplus Q_{1}\nonumber\\
&\subseteq&(y\oplus Q_{1})\oplus Q_{1}\nonumber\\
&=&\bigcup\{(y\oplus p)\oplus q:p,q\in Q_{1}\}\nonumber\\
&=&\bigcup\{y\oplus (p\oplus \mbox{gyr}[p,y](q)):p,q\in Q_{1}\}\nonumber\\
&=&y\oplus (Q_{1}\oplus Q_{1})\nonumber\\
&\subseteq&y\oplus Q\nonumber\\
&\subseteq&G\setminus P\nonumber.
\end{eqnarray}
Therefore, $(U\oplus F)\cap P=\emptyset$. Finally, put $V=Q\cap U$, and we obtain that $(F\oplus V)\cap P=\emptyset$ and $(V\oplus F)\cap P=\emptyset$.
\end{proof}

\begin{lemma}\label{perfect1}
Let $G$ be a strongly topological gyrogroup and $H$ a locally compact strong subgyrogroup of $G$. Suppose that $P$ is a closed symmetric subset of $G$ such that $P$ contains an open neighborhood of $0$ in $G$, and $\overline{P\oplus (P\oplus P)}\cap H$ is compact. Then the restriction $f$ of $\pi$ to $P$ is a perfect mapping from $P$ onto the subspace $\pi(P)$ of $G/H$.
\end{lemma}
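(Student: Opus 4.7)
The plan is to verify the three conditions for $f$ to be a perfect mapping. Continuity is inherited from $\pi$ and surjectivity is built into the definition of $\pi(P)$, so the substance lies in showing (i) the fibers $f^{-1}(\pi(x))$ are compact for each $x\in P$, and (ii) $f$ is a closed map. For (i), I would write $f^{-1}(\pi(x))=(x\oplus H)\cap P=x\oplus(H\cap(\ominus x\oplus P))$, using the left cancellation law of Lemma \ref{a} together with the fact that left translation by $\ominus x$ is a homeomorphism. Since $P$ is symmetric, $\ominus x\in P$, hence $\ominus x\oplus P\subseteq P\oplus P$, and since $0\in P$ one has $P\oplus P\subseteq P\oplus(P\oplus P)$. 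Therefore $H\cap(\ominus x\oplus P)\subseteq H\cap\overline{P\oplus(P\oplus P)}$, which is compact by hypothesis; combined with $H$ being closed in $G$ (Proposition \ref{closed}) and $\ominus x\oplus P$ being closed, this intersection is a closed subset of a compact set, hence compact, so the fiber is compact.

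For (ii), I would take a closed subset $F$ of $P$ (hence of $G$) and a point $y=\pi(x)\in\pi(P)\setminus\pi(F)$ with $x\in P$, and look for an open neighborhood $W$ of $y$ in $G/H$ satisfying $W\cap\pi(F)=\emptyset$. The key move will be to enlarge the fiber to $K'=(x\oplus H)\cap\overline{P\oplus P}$; a computation parallel to the one above rewrites $K'=x\oplus(H\cap\overline{\ominus x\oplus(P\oplus P)})\subseteq x\oplus(H\cap\overline{P\oplus(P\oplus P)})$, and the same hypothesis forces $K'$ to be compact. Since $F\subseteq P\subseteq\overline{P\oplus P}$ and $(x\oplus H)\cap F=\emptyset$ (this is exactly $\pi(x)\notin\pi(F)$), one also has $K'\cap F=\emptyset$. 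Applying Theorem \ref{disjoint} then produces a symmetric open neighborhood $V$ of $0$, which I would further shrink so that $V$ lies inside the open neighborhood of $0$ contained in $P$, satisfying $(V\oplus K')\cap F=\emptyset$. Finally, I would set $W=\pi(V\oplus x)$, open in $G/H$ by Lemma \ref{t00000} and containing $y$.

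The remaining task is to verify $W\cap\pi(F)=\emptyset$: assuming the contrary yields $v\in V$ and $z\in F$ with $z=(v\oplus x)\oplus h$ for some $h\in H$. Here Lemma \ref{a}(5) rewrites this as $z=v\oplus(x\oplus h')$ with $h'=\mbox{gyr}[x,v](h)$, and the strong subgyrogroup assumption $\mbox{gyr}[x,v](H)=H$ keeps $h'$ inside $H$. Left cancellation then gives $x\oplus h'=\ominus v\oplus z\in V\oplus F\subseteq V\oplus P\subseteq P\oplus P$ (using $V\subseteq P$ and $V$ symmetric), so that $x\oplus h'\in(x\oplus H)\cap(P\oplus P)\subseteq K'$, whence $z=v\oplus(x\oplus h')\in V\oplus K'$, contradicting $(V\oplus K')\cap F=\emptyset$. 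I expect the main obstacle to be precisely this last chain of identifications: absorbing the gyroassociative defect via $\mbox{gyr}[x,v]$ and recognizing that the strong subgyrogroup property is exactly what keeps the transferred element inside $H$; without this invariance, the term produced by the gyration would escape $H$ and the separation argument would break down immediately.
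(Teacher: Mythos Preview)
Your argument is correct and follows essentially the same route as the paper's proof: for compactness of fibers you translate $(x\oplus H)\cap P$ by $\ominus x$ into a closed subset of $H\cap\overline{P\oplus(P\oplus P)}$, and for closedness you separate the compact set $(x\oplus H)\cap\overline{P\oplus P}$ from $F$ via Theorem~\ref{disjoint}, then derive a contradiction by pushing the gyration into $H$ using the strong subgyrogroup hypothesis. The only cosmetic differences are that you phrase closedness as ``complement of the image is open'' rather than the paper's contrapositive ``limit points of $f(M)$ lie in $f(M)$'', and you supply the details for why $(x\oplus H)\cap\overline{P\oplus P}$ is compact where the paper simply asserts it.
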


\begin{proof}
It is obvious that $f$ is continuous. Moreover, since $H$ is a locally compact subgyrogroup of $G$, we know that $H$ is closed in $G$ by Proposition \ref{closed}.

{\bf Claim 1.} $f^{-1}(f(a))$ is compact for each $a\in P$.

Indeed, from the definition of $f$, we have $f^{-1}(f(a))=(a\oplus H)\cap P$. By \cite[Proposition 3]{AW}, the left gyrotranslation is a homeomorphism, so the subspace $(a\oplus H)\cap P$ and $H\cap ((\ominus a)\oplus P)$ are homeomorphic, thus both of them are closed in $G$. From $\ominus a\in \ominus P=P$, it follows that $$H\cap ((\ominus a)\oplus P)\subseteq H\cap (P\oplus P)\subseteq \overline{P\oplus (P\oplus P)}\cap H.$$ Hence, $H\cap ((\ominus a)\oplus P)$ is compact and so is the set $f^{-1}f(a)$.

{\bf Claim 2.} $f$ is a closed mapping.

Let us fix any closed subset $M$ of $P$ and let $a$ be an any point of $P$ such that $f(a)\in \overline{f(M)}$. It suffices to show that $f(a)\in f(M)$.

Suppose on the contrary. Then $(a\oplus H)\cap (M\oplus H)\cap P=\emptyset$. Since $H$ is a strong subgyrogroup in $G$, $(a\oplus H)\oplus H=\bigcup_{x,y\in H}\{(a\oplus x)\oplus y\}=\bigcup_{x,y\in H}\{a\oplus (x\oplus \mbox{gyr}[x,a](y))\}=a\oplus (H\oplus H)=a\oplus H$. Then $(a\oplus H)\cap M\cap P=\emptyset$, and $(a\oplus H)\cap\overline{(P\oplus P)}\cap M=\emptyset$ since $M\subseteq P$. Obviously, $(a\oplus H)\cap \overline{(P\oplus P)}$ is compact. Since $M$ is a closed and disjoint from the compact subset $(a\oplus H)\cap \overline{(P\oplus P)}$, by Theorem \ref{disjoint}, there exists an open neighborhood $W\in \mathscr U$ such that $W\subseteq P$ and $(W\oplus ((a\oplus H)\cap \overline{(P\oplus P)}))\cap M=\emptyset $.

Since the quotient mapping $\pi $ is open by Lemma~\ref{t00000} and $W\oplus a$ is an open neighborhood of $a$, the set $\pi (W\oplus a)$ is an open neighborhood of $\pi (a)$ in $G/H$. Therefore, the set $\pi (W\oplus a)\cap \pi (M)\neq \emptyset $ and we can fix $m\in M$ and $y\in W$ such that $\pi (m)=\pi (y\oplus a)$, that is, $m\in (y\oplus a)\oplus H$. Then, $(y\oplus a)\oplus H=y\oplus (a\oplus gyr[a,y](H))=y\oplus (a\oplus H)$. Hence, there exists an $h\in H$ such that $a\oplus h=\ominus y\oplus m$. Since $\ominus y\in \ominus W=W\subseteq P$ and $m\in M\subseteq P$, we have that $a\oplus h=\ominus y\oplus m\in \overline{(P\oplus P)}$. In addition, $a\oplus h\in a\oplus H$. Hence, $a\oplus h\in ((a\oplus H)\cap \overline{(P\oplus P)})$ and $m\in W\oplus ((a\oplus H)\cap \overline{(P\oplus P)})$. Thus, $M\cap (W\oplus ((a\oplus H)\cap \overline{(P\oplus P)}))\neq \emptyset$, which is a contradiction.

Therefore, $f(a)\in f(M)$ and $f(M)$ is closed in $f(P)$. Then, since $a\in P$ is arbitrarily taken, we conclude that the mapping $f$ is perfect.
\end{proof}

The following result is very important in this paper.

\begin{theorem}\label{perfect2}
Let $G$ be a strongly topological gyrogroup and $H$ a locally compact strong subgyrogroup of $G$. Then there exists an open neighborhood $U$ of the identity element $0$ such that $\pi (\overline{U})$ is closed in $G/H$ and the restriction of $\pi $ to $\overline{U}$ is a perfect mapping from $\overline{U}$ onto the subspace $\pi (\overline{U})$.
\end{theorem}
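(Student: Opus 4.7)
The plan is to apply Lemma \ref{perfect1} to $P=\overline{U}$ for a $U\in\mathscr U$ chosen via the local compactness of $H$. By Proposition \ref{closed}, $H$ is closed in $G$; local compactness of $H$ yields an open neighborhood $W_{0}$ of $0$ in $G$ with $\overline{W_{0}}\cap H$ compact. Using the joint continuity of $\oplus$ together with the strongly topological structure, I would select a symmetric $U\in\mathscr U$ so small that $(U\oplus U)\oplus(U\oplus U)\subseteq W_{0}$. The standard joint-continuity inclusion $\overline{A}\oplus\overline{B}\subseteq\overline{A\oplus B}$ then gives $\overline{U}\oplus(\overline{U}\oplus\overline{U})\subseteq\overline{W_{0}}$, so $\overline{\overline{U}\oplus(\overline{U}\oplus\overline{U})}\cap H\subseteq\overline{W_{0}}\cap H$ is compact. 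Since $\overline{U}$ is closed, symmetric (because $U\in\mathscr U$ is gyration-invariant and symmetric), and contains the open neighborhood $U$ of $0$, Lemma \ref{perfect1} immediately delivers that $\pi|_{\overline{U}}$ is a perfect mapping from $\overline{U}$ onto the subspace $\pi(\overline{U})$.

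The remaining (and harder) task is to verify that $\pi(\overline{U})$ is closed in $G/H$, which is equivalent to $\overline{U}\oplus H=\pi^{-1}(\pi(\overline{U}))$ being closed in $G$. Take $x\in G\setminus(\overline{U}\oplus H)$; since $H$ is a strong subgyrogroup and left cosets partition $G$, this is the same as $(x\oplus H)\cap\overline{U}=\emptyset$. I intend to produce an open neighborhood of $x$ disjoint from $\overline{U}\oplus H$ by applying Theorem \ref{disjoint} to a compact slice of the coset $x\oplus H$, rather than to the whole coset (which need not be compact).

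The crucial step, which I expect to be the main obstacle, is to show that the set $A:=(x\oplus H)\cap\overline{U\oplus U}$ is compact whenever it is nonempty. Indeed, picking any $h_{0}\in H$ with $y_{0}:=x\oplus h_{0}\in\overline{U\oplus U}$, the identity $x\oplus H=y_{0}\oplus H$ (cosets form a partition) together with gyroassociativity and the strong subgyrogroup property lets me rewrite every element of $A$ as $y_{0}\oplus h'$ with $h'\in H$, and left cancellation forces $h'\in(\ominus y_{0})\oplus\overline{U\oplus U}\subseteq\overline{U\oplus U}\oplus\overline{U\oplus U}\subseteq\overline{(U\oplus U)\oplus(U\oplus U)}\subseteq\overline{W_{0}}$. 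Hence $h'\in\overline{W_{0}}\cap H$, so $A\subseteq y_{0}\oplus(\overline{W_{0}}\cap H)$ is a closed subset of a compact set, and therefore compact. Applying Theorem \ref{disjoint} to the disjoint pair $(A,\overline{U})$ yields $V\in\mathscr U$ with $(V\oplus A)\cap\overline{U}=\emptyset$; shrinking $V$ inside $U$, any putative $v\oplus(x\oplus h)\in\overline{U}$ would give $x\oplus h\in V\oplus\overline{U}\subseteq\overline{U\oplus U}$, placing $x\oplus h\in A$ and contradicting the separation already secured. The degenerate case $A=\emptyset$ is disposed of by the same inclusion without even invoking Theorem \ref{disjoint}. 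It is precisely in the compactness argument for $A$ that the symmetry and gyration-invariance of the base $\mathscr U$ combine with the partition-by-cosets property of the strong subgyrogroup $H$ to reduce the noncompact coset $x\oplus H$ to a tractable compact piece.
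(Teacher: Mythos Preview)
Your argument is correct, but it diverges from the paper's in the second half, and the paper's route is considerably shorter.

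For the first part (applying Lemma \ref{perfect1} to $P=\overline{U}$), your setup is essentially the same as the paper's: pick $W_{0}$ with $\overline{W_{0}}\cap H$ compact, then choose a small symmetric $U\in\mathscr U$ so that $\overline{P\oplus(P\oplus P)}\cap H$ lands in the compact set. The paper does exactly this with $U_{0}\oplus(U_{0}\oplus U_{0})\subseteq W$.

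The real difference is in proving $\pi(\overline{U})$ closed in $G/H$. You argue directly: for $x\notin\overline{U}\oplus H$ you show the slice $A=(x\oplus H)\cap\overline{U\oplus U}$ is compact (via the coset identity $x\oplus H=y_{0}\oplus H$ and the fact that the relevant $h'$ is trapped in $\overline{W_{0}}\cap H$), then separate $A$ from $\overline{U}$ by Theorem \ref{disjoint}, and finally use $V\subseteq U$ to force any hypothetical intersection point back into $A$. This works, though you should say explicitly that the resulting open neighborhood of $x$ is $V\oplus(x\oplus H)$ and that it is $H$-saturated (both follow from $H$ being a strong subgyrogroup), so that disjointness from $\overline{U}$ upgrades to disjointness from $\overline{U}\oplus H$.

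The paper avoids this separation argument entirely. Having already obtained that $\pi|_{P}$ is perfect (hence closed) on the \emph{larger} set $P=\overline{U_{0}}$, it invokes the regularity of $G/H$ (Theorem \ref{regular}) to pick an open $V_{0}$ with $\overline{V_{0}}\subseteq\pi(U_{0})$, and then \emph{replaces} $U_{0}$ by the smaller $U=\pi^{-1}(V_{0})\cap U_{0}$. Now $\overline{U}$ is closed in $P$, so $\pi(\overline{U})$ is closed in $\pi(P)$ (closed map), and since $\pi(\overline{U})\subseteq\overline{V_{0}}\subseteq\pi(P)$ with $\overline{V_{0}}$ closed in $G/H$, the set $\pi(\overline{U})$ is closed in $G/H$. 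So the paper trades a hands-on separation for a two-step ``prove perfect on a bigger set, then shrink via regularity'' maneuver. Your approach keeps a single $U$ throughout and is self-contained, at the price of the extra compactness and saturation bookkeeping; the paper's approach is shorter and leverages Theorem \ref{regular}, which is proved earlier anyway.
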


\begin{proof}
Since $H$ is locally compact, we know that $H$ is closed in $G$ by Proposition \ref{closed} and we can find an open neighborhood $V$ of $0$ in $G$ such that $\overline{V\cap H}$ is compact. Since $G$ is regular, we can choose an open neighborhood $W$ of $0$ such that $\overline{W}\subseteq V$. As a closed subspace of the compact set $\overline{V\cap H}$, $\overline{W}\cap H$ is compact. Let $U_{0}$ be an arbitrary symmetric open neighborhood of $0$ such that $U_{0}\oplus (U_{0}\oplus U_{0})\subseteq W$. By the joint continuity, we have $\overline{U_{0}}\oplus (\overline{U_{0}}\oplus \overline{U_{0}})\subseteq \overline{U_{0}\oplus (U_{0}\oplus U_{0})}$. Then the set $P=\overline{U_{0}}$ satisfies all restrictions on $P$ in Lemma \ref{perfect1}. It follows from Lemma \ref{perfect1} that the restriction of $\pi $ to $P$ is a perfect mapping from $P$ onto the subspace $\pi (P)$.

It follows from Lemma \ref{t00000} that $\pi $ is an open mapping, the set $\pi (U_{0})$ is open in $G/H$. It follows from Theorem \ref{regular} that the space $G/H$ is regular, then we can find an open neighborhood $V_{0}$ of $\pi (0)$ in $G/H$ such that $\overline{V_{0}}\subseteq \pi (U_{0})$. Hence $U=\pi ^{-1}(V_{0})\cap U_{0}$ is an open neighborhood of $0$ contained in $P$ such that the restriction $f$ of $\pi$ to $\overline{U}$ is a perfect mapping from $\overline{U}$ onto the subspace $\pi (\overline{U})$. Furthermore, $\pi (\overline{U})$ is closed in $\pi (P)$, and $\pi (\overline{U})\subseteq \overline{V_{0}}\subseteq \pi (U_{0})\subseteq \pi (P)$. Then $\pi (\overline{U})$ is closed in $\overline{V_{0}}$, so that $\pi (\overline{U})$ is closed in $G/H$.
\end{proof}

\begin{corollary}\label{property}
Let $\mathscr P$ be a topological property preserved by preimages of spaces under perfect mappings (in the class of completely regular spaces) and also inherited by regular closed sets. Let $G$ be a strongly topological gyrogroup and $H$ a locally compact strong subgyrogroup of $G$. Then if the quotient space $G/H$ has the property $\mathscr P$, we can find an open neighborhood $U$ of the identity element $0$ such that $\overline{U}$ has the property $\mathscr P$.
\end{corollary}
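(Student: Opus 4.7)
The plan is to use Theorem~\ref{perfect2} to reduce the corollary to the two defining features of the property $\mathscr P$: heredity to regular closed subsets and preservation under perfect preimages.

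First I would invoke Theorem~\ref{perfect2} to obtain an open symmetric neighborhood $U$ of $0$ such that the set $\pi(\overline{U})$ is closed in $G/H$ and the restriction $f=\pi|_{\overline{U}}:\overline{U}\to \pi(\overline{U})$ is a perfect map. Since $G/H$ is assumed to have the property $\mathscr P$, the first step is to argue that $\pi(\overline{U})$ is a regular closed subspace of $G/H$, so that it inherits $\mathscr P$. For this, note that $\pi$ is open and continuous (Lemma~\ref{t00000}), so $\pi(U)$ is open and $\pi(U)\subseteq \pi(\overline{U})\subseteq \overline{\pi(U)}$; combined with $\pi(\overline{U})$ being closed this gives $\pi(\overline{U})=\overline{\pi(U)}$. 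Because $\pi(U)$ is an open subset of the closed set $\pi(\overline{U})$, we have $\pi(U)\subseteq \operatorname{int}_{G/H}\pi(\overline{U})$, and therefore
\[
\pi(\overline{U})=\overline{\pi(U)}\subseteq \overline{\operatorname{int}_{G/H}\pi(\overline{U})}\subseteq \pi(\overline{U}),
\]
showing that $\pi(\overline{U})$ is regular closed in $G/H$.

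Next, the hypothesis that $\mathscr P$ is hereditary with respect to regular closed sets yields that $\pi(\overline{U})$ has $\mathscr P$. To apply the perfect-preimage clause I must work in the class of completely regular spaces. Since $G$ is a $T_0$-strongly topological gyrogroup it is completely regular by \cite{BL1}, hence $\overline{U}$, as a subspace of $G$, is completely regular; likewise $G/H$ inherits regularity from Theorem~\ref{regular}, and its regular closed subspace $\pi(\overline{U})$ is completely regular as a subspace of $G/H$ (which carries a completely regular topology on passing through the quotient, as $G/H$ is a homogeneous regular $T_1$-space by Lemma~\ref{homogeneous} and Theorem~\ref{regular}). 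Thus both the source and target of $f$ are completely regular, and the perfect mapping $f$ is precisely the situation covered by the second hypothesis on $\mathscr P$.

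The final step is to conclude: since $\pi(\overline{U})$ has $\mathscr P$ and $f:\overline{U}\to \pi(\overline{U})$ is a perfect map from a completely regular space onto a completely regular space with $\mathscr P$, preservation of $\mathscr P$ under preimages of perfect mappings gives that $\overline{U}$ has $\mathscr P$, as required. The only genuinely non-cosmetic part of the argument is verifying that $\pi(\overline{U})$ is regular closed and that the ambient spaces are completely regular so that both clauses of the hypothesis on $\mathscr P$ are legitimately available; the rest is a direct appeal to Theorem~\ref{perfect2}.
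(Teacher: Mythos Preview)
Your approach is exactly the one the paper intends; the corollary is stated there without proof, as an immediate consequence of Theorem~\ref{perfect2}, and your explicit verification that $\pi(\overline{U})=\overline{\pi(U)}$ is regular closed in $G/H$ fills in the one step the paper leaves to the reader.

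There is, however, a genuine soft spot in your complete-regularity paragraph. You argue that $\pi(\overline{U})$ is completely regular because $G/H$ ``carries a completely regular topology \ldots\ as $G/H$ is a homogeneous regular $T_1$-space.'' That implication is false in general: homogeneity together with regularity and $T_1$ does not force complete regularity. In fact the paper itself records the complete regularity of $G/H$ (for $H$ a closed strong subgyrogroup) as an open question at the end of Section~5, so you cannot simply assert it. The paper glosses over this point entirely, and for the concrete properties listed afterward (local compactness, countable compactness, pseudocompactness, Lindel\"of, $\sigma$-compactness, \v{C}ech-completeness) the perfect-preimage preservation theorems hold already for Hausdorff spaces, so the parenthetical ``in the class of completely regular spaces'' is not actually needed in the applications. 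But as your argument is written, that step does not stand; either drop the Tychonoff clause and note that the relevant preservation theorems require only Hausdorffness, or acknowledge that the complete regularity of $G/H$ is not established in the paper.
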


As we all know, local compactness, countable compactness, pseudocompactness, the Lindel\"{o}f property, $\sigma $-compactness and $\check{\rm C}$ech-completeness are all inherited by regular closed sets and preserved by perfect preimages. Then the followings are clear by Corollary \ref{property}.

\begin{corollary}
Let $G$ be a strongly topological gyrogroup and $H$ a locally compact strong subgyrogroup of $G$. If the quotient space $G/H$ has some of the following properties:

\smallskip
(1) $G/H$ is locally compact;

\smallskip
(2) $G/H$ is locally countably compact;

\smallskip
(3) $G/H$ is locally pseudocompact;

\smallskip
(4) $G/H$ is locally $\sigma $-compact;

\smallskip
(5) $G/H$ is locally Lindel\"{o}f;

\smallskip
(6) $G/H$ is locally $\check{C}$ech-complete; and

\smallskip
(7) $G/H$ is locally realcompact,\\
\smallskip
then $G$ also has the same property.
\end{corollary}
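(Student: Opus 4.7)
The plan is to apply Corollary~\ref{property} once for each of the seven local properties listed, and then use the homogeneity of $G$ to promote the conclusion at the identity to every point. Throughout, let $\mathscr{P}_0$ denote the ``global'' counterpart of the local property $\mathscr{P}$ under consideration, namely \emph{compact}, \emph{countably compact}, \emph{pseudocompact}, \emph{$\sigma$-compact}, \emph{Lindel\"{o}f}, \emph{$\check{C}$ech-complete}, or \emph{realcompact}.

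The first step is to verify that the property ``locally $\mathscr{P}_0$'' is itself closed-hereditary and preserved by perfect preimages in the class of Tychonoff spaces. For closed-heredity, given a closed $C\subseteq X$ with $X$ locally $\mathscr{P}_0$ and a point $c\in C$, one intersects the witnessing $X$-neighbourhood of $c$ with $C$ and applies the (already known) closed-heredity of the global $\mathscr{P}_0$. For the perfect-preimage part, one pulls back along a perfect map a $\mathscr{P}_0$-neighbourhood of $f(y)$ to a $\mathscr{P}_0$-neighbourhood of $y$, using that preimages under a perfect map take compact sets to compact sets and open sets to open sets. These are classical facts in general topology.

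With this in hand, Corollary~\ref{property} applied with $\mathscr{P}=$ ``locally $\mathscr{P}_0$'' supplies an open neighbourhood $U$ of $0$ such that $\overline{U}$ is locally $\mathscr{P}_0$. Since $U$ is open with $0\in U\subseteq \overline{U}$, the point $0$ lies in the $G$-interior of $\overline{U}$, so any $\mathscr{P}_0$-neighbourhood $N$ of $0$ in the subspace $\overline{U}$ is already a $\mathscr{P}_0$-neighbourhood of $0$ in $G$. Finally, by \cite[Proposition~3]{AW} each left gyrotranslation $L_{x}(y)=x\oplus y$ is a self-homeomorphism of $G$, so $L_{x}(N)$ is a $\mathscr{P}_0$-neighbourhood of $x$ for every $x\in G$. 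Letting $x$ range over $G$ shows that $G$ is locally $\mathscr{P}_0$, as required.

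The only nontrivial obstacle is the first step: the case-by-case verification that each of the seven local properties is closed-hereditary and preserved by perfect preimages in the Tychonoff class. However, each instance is a standard textbook fact (see, e.g., \cite{E} for the compact-type and Lindel\"of cases, and \cite{AA} for the $\check{C}$ech-complete and realcompact variants), so once these are invoked the remainder of the proof reduces to a single application of Corollary~\ref{property} together with the homogeneity-by-gyrotranslation argument above.
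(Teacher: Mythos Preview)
Your proposal is correct and follows the paper's approach: the paper simply asserts (in the sentence preceding the corollary) that these properties are inherited by regular closed sets and preserved by perfect preimages, then invokes Corollary~\ref{property}, leaving the homogeneity step you spell out implicit. One small caveat: your closed-heredity argument for ``locally $\mathscr{P}_0$'' appeals to closed-heredity of the global $\mathscr{P}_0$, which fails for pseudocompactness, but Corollary~\ref{property} only requires inheritance by \emph{regular closed} subsets, and that does hold in all seven cases.
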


Since paracompactness is inherited by regular closed sets and preserved by perfect preimages and it was proved in \cite[Theorem 4.6]{BL2} that every locally paracompact strongly topological gyrogroup is paracompact, the following result is trivial.

\begin{corollary}
Let $G$ be a strongly topological gyrogroup and $H$ a locally compact strong subgyrogroup of $G$. If the quotient space $G/H$ is locally paracompact, then $G$ is a paracompact space.
\end{corollary}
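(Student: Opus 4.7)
The plan is to reduce the claim to showing that $G$ is locally paracompact at the identity $0$, and then invoke \cite[Theorem 4.6]{BL2}, which states that every locally paracompact strongly topological gyrogroup is paracompact.

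First, since $G/H$ is locally paracompact and homogeneous by Lemma \ref{homogeneous}, I can pick a paracompact open neighborhood $W$ of $\pi(0)$ in $G/H$. Applying Theorem \ref{perfect2}, I obtain an open neighborhood $U_{0}$ of $0$ in $G$ such that $\pi(\overline{U_{0}})$ is closed in $G/H$ and the restriction $\pi|_{\overline{U_{0}}}\colon\overline{U_{0}}\to \pi(\overline{U_{0}})$ is a perfect mapping. Using Theorem \ref{regular} together with the open-continuous property of $\pi$ (Lemma \ref{t00000}), I then shrink $U_{0}$ to an open neighborhood $U$ of $0$ with $\pi(\overline{U})\subseteq W$, exactly as in the last paragraph of the proof of Theorem \ref{perfect2}: one selects an open neighborhood $V_{0}$ of $\pi(0)$ with $\overline{V_{0}}\subseteq W$ and sets $U=\pi^{-1}(V_{0})\cap U_{0}$. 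By construction, $\pi(\overline{U})$ is still closed in $G/H$ and the restriction of $\pi$ to $\overline{U}$ is still a perfect mapping onto $\pi(\overline{U})$.

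Next I combine two classical facts: closed subspaces of paracompact spaces are paracompact, and paracompactness is preserved under perfect preimages. Since $\pi(\overline{U})$ is a closed subspace of the paracompact space $W$, it is paracompact; and since $\pi|_{\overline{U}}$ is perfect, $\overline{U}$ is paracompact as well. Thus $U$ is an open neighborhood of $0$ in $G$ with paracompact closure, so $G$ is locally paracompact at $0$. As left gyrotranslations in $G$ are homeomorphisms by \cite[Proposition 3]{AW}, the space $G$ is homogeneous and therefore locally paracompact everywhere. A final appeal to \cite[Theorem 4.6]{BL2} yields that $G$ is paracompact.

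The only non-routine step is justifying the shrinkage so that $\pi(\overline{U})\subseteq W$ without losing the perfect-mapping conclusion of Theorem \ref{perfect2}; this is why I lift the neighborhood from $G/H$ back through $\pi$ rather than trying to construct it directly in $G$. Everything else is a formal application of Corollary \ref{property} in spirit, adapted to the fact that we only have \emph{local} paracompactness of $G/H$ rather than paracompactness of the whole quotient.
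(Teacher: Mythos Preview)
Your argument is correct and matches the paper's approach: the paper simply remarks that paracompactness is inherited by regular closed sets and preserved by perfect preimages, so one obtains a paracompact $\overline{U}$ and then invokes \cite[Theorem~4.6]{BL2}, which is exactly what you do in detail. One small technical point: when you choose $V_{0}$, you need $\overline{V_{0}}\subseteq W\cap \pi(U_{0})$ (not just $\overline{V_{0}}\subseteq W$), since the argument from the last paragraph of Theorem~\ref{perfect2} uses $\overline{V_{0}}\subseteq \pi(U_{0})$ to conclude that $\pi(\overline{U})$ is closed in $G/H$; this is immediate from regularity and does not affect the rest of your proof.
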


\begin{corollary}
Let $G$ be a strongly topological gyrogroup and $H$ a locally compact strong subgyrogroup of $G$. If the quotient space $G/H$ is a $k$-space, then $G$ is also a $k$-space.
\end{corollary}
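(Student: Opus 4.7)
The plan is to transfer the $k$-space property from $G/H$ down to a closed neighborhood of $0$ in $G$ via the perfect mapping supplied by Theorem~\ref{perfect2}, and then to spread it over all of $G$ using the homogeneity provided by left gyrotranslations.

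First, I would apply Theorem~\ref{perfect2} to obtain an open neighborhood $U$ of $0$ such that $\pi(\overline{U})$ is closed in $G/H$ and $\pi|_{\overline{U}}$ is a perfect surjection onto $\pi(\overline{U})$. Since the $k$-space property is inherited by closed subspaces, $\pi(\overline{U})$ is a $k$-space; since it is preserved by perfect preimages, $\overline{U}$ is a $k$-space. Equivalently, one may quote Corollary~\ref{property}, as being a $k$-space is preserved by perfect preimages and inherited by regular closed subspaces.

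Next, for every $a\in G$ the left gyrotranslation $y\mapsto a\oplus y$ is a homeomorphism of $G$, so $a\oplus\overline{U}=\overline{a\oplus U}$ is closed in $G$ and homeomorphic to $\overline{U}$, hence is itself a $k$-space. Since $a=a\oplus 0\in a\oplus U\subseteq\mbox{int}(a\oplus\overline{U})$ for each $a\in G$, the family $\{a\oplus\overline{U}:a\in G\}$ is a closed cover of $G$ whose interiors still cover $G$.

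Finally, I would invoke the standard lemma that if a space $X$ admits a closed cover $\{F_{\alpha}\}$ by $k$-subspaces with $X=\bigcup_{\alpha}\mbox{int}(F_{\alpha})$, then $X$ is a $k$-space. The verification is short: for any $A\subseteq X$ whose intersection with every compact set is closed, each $A\cap F_{\alpha}$ is closed in $F_{\alpha}$ (since compact subsets of $F_{\alpha}$ are compact in $X$), and intersecting the relatively open complement $F_{\alpha}\setminus A$ with $\mbox{int}(F_{\alpha})$ exhibits, for each point of $\mbox{int}(F_{\alpha})\setminus A$, an open neighborhood in $X$ disjoint from $A$. Applied to $\{a\oplus\overline{U}:a\in G\}$, this yields that $G$ is a $k$-space. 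The main subtlety the lemma addresses is that $U$ itself need not be a $k$-space, as open subspaces of $k$-spaces do not in general inherit the property; this is why one must work with the closed translates $a\oplus\overline{U}$ whose interiors cover $G$ rather than directly with an open cover by $k$-subspaces.
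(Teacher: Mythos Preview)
Your proposal is correct and follows essentially the same route as the paper: obtain a $k$-space neighborhood $\overline{U}$ of $0$ via Theorem~\ref{perfect2} (or equivalently Corollary~\ref{property}), then use homogeneity to conclude that $G$ is locally a $k$-space, hence a $k$-space. The only difference is that the paper simply cites Engelking for the fact that a locally $k$-space is a $k$-space, whereas you supply the short direct argument for it.
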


\begin{proof}
Since the property of being a $k$-space is invariant under taking perfect preimages and a locally $k$-space is a $k$-space, see \cite[Section 3.3]{E}, it follows that $G$ is also a $k$-space.
\end{proof}

A topological gyrogroup is {\it feathered} if it contains a non-empty compact set $K$ of countable character in $G$.

\begin{lemma}\cite{BLL}\label{paracompact}
Let $G$ be a strongly topological gyrogroup. Then the followings are equivalent:

\smallskip
(1) $G$ is feathered,

\smallskip
(2) $G$ is a $p$-space, and

\smallskip
(3) $G$ is a paracompact $p$-space.
\end{lemma}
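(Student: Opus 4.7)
The plan is to establish the cyclic implications $(3)\Rightarrow(2)\Rightarrow(1)\Rightarrow(3)$, modeled on the classical theorem of Arhangel'ski\u{\i} for topological groups and using the perfect-mapping machinery (Theorem~\ref{perfect2}) and the local-to-global paracompactness result \cite[Theorem~4.6]{BL2} already available in the gyrogroup setting.

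The implication $(3)\Rightarrow(2)$ is immediate from the definition of a paracompact $p$-space. For $(2)\Rightarrow(1)$, recall that every $p$-space $G$ embeds as a $G_{\delta}$-subset in some \v{C}ech-complete extension, hence $G$ itself contains a non-empty compact set $K$ of type $G_{\delta}$. Using the continuity of $\oplus$ and $\ominus$, together with the gyroinvariance of the neighborhood base $\mathscr U$ at $0$ (Definition~\ref{d11}) and the homogeneity of $G$ by left gyrotranslations, I would show that $K$ has countable character in $G$; this is the standard argument used for topological groups and only requires that every point of $G$ admits a countable family of open neighborhoods whose intersection is $K$, which one extracts from the $G_{\delta}$-representation together with the continuity of $\oplus$. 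Consequently $G$ is feathered.

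The main direction is $(1)\Rightarrow(3)$. Assume $G$ contains a non-empty compact set $K$ of countable character. The first task is to produce a compact strong subgyrogroup $H$ of $G$ with countable character at $0$. Using the symmetric gyro\-invariant base $\mathscr U$ at $0$, I would imitate the admissible-subgyrogroup construction from Definition~4.3: start from a decreasing sequence $\{U_n\}\subseteq\mathscr U$ of open neighborhoods whose behaviour at $0$ is controlled by $K$, arrange $U_{n+1}\oplus(U_{n+1}\oplus U_{n+1})\subseteq U_n$, and set $H=\bigcap_n U_n$. Since each $U_n$ is gyroinvariant, the intersection $H$ is automatically a strong subgyrogroup; because $K$ has countable character, a careful choice of the $U_n$ forces $H\subseteq K$, so $H$ is compact, and the sequence $\{U_n\}$ itself certifies that $H$ has countable character at $0$.

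With such an $H$ in hand, Theorem~\ref{perfect2} gives an open neighborhood $U$ of $0$ for which $\pi\colon\overline{U}\to\pi(\overline{U})\subseteq G/H$ is a perfect surjection onto a closed subspace of $G/H$. Because $H$ has countable character, the quotient $G/H$ is first-countable at $\pi(0)$, hence (by homogeneity, Lemma~\ref{homogeneous}) first-countable everywhere; combined with the regularity of $G/H$ from Theorem~\ref{regular} and the standard coset-space argument (essentially the metrization procedure reproduced in Theorem~\ref{metri}, but now with the compact subgyrogroup $H$ playing the role of $P$) one concludes that $G/H$ is metrizable. Then $\overline{U}$, being a perfect preimage of a metrizable space, is a paracompact $p$-space, so $G$ is locally a paracompact $p$-space; applying \cite[Theorem~4.6]{BL2} upgrades local paracompactness to paracompactness of all of $G$, and the $p$-space property is preserved because each point has a neighborhood that is a $p$-space and the paracompact gluing is well-behaved.

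The main obstacle I anticipate is the first step of $(1)\Rightarrow(3)$: transferring the compact set of countable character to a genuine compact \emph{strong} subgyrogroup of countable character. In the group setting one simply takes the intersection of sufficiently small symmetric neighborhoods, but in a gyrogroup the non-associativity and the presence of non-trivial gyrations mean that intersections of subsets need not be subgyrogroups; the gyroinvariance of the base $\mathscr U$ is exactly what makes the admissible-subgyrogroup construction work, and verifying that the resulting $H$ is both compact and of countable character requires delicate control of the nested neighborhoods against the compact set produced by featheredness.
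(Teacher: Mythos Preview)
The paper does not prove this lemma; it is quoted verbatim from \cite{BLL} and used as a black box in the proof of the next theorem. So there is no in-paper argument to compare against, and what you are really doing is reconstructing the proof of \cite{BLL}, which in turn adapts Arhangel'ski\u{\i}'s classical theorem for topological groups \cite[Theorem~4.3.15]{AA}.

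Your overall plan for $(1)\Rightarrow(3)$ is the right one, and the admissible construction does give a compact strong subgyrogroup: choosing $U_n\in\mathscr U$ with $U_n$ inside the $n$th member of a countable neighborhood base of the compact set $K\ni 0$ and with $U_{n+1}\oplus(U_{n+1}\oplus U_{n+1})\subseteq U_n$ yields $H=\bigcap_n U_n=\bigcap_n\overline{U_n}$, closed and contained in $K$, hence compact. Two points need tightening. First, ``the sequence $\{U_n\}$ itself certifies that $H$ has countable character'' is not literally correct---the $U_n$ are neighborhoods of $0$, not of $H$. What is true is that $\{H\oplus U_n\}$ is a countable base of neighborhoods of $H$: for any open $V\supseteq H$, Theorem~\ref{disjoint} applied to the compact set $H$ and the closed set $G\setminus V$ produces $W\ni 0$ with $H\oplus W\subseteq V$, and then any $U_m\subseteq W$ does the job. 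Second, routing the argument through the \emph{local} perfect map of Theorem~\ref{perfect2} leaves you with the unproved gluing step ``the paracompact gluing is well-behaved''. It is cleaner to note that once $H$ is a compact strong subgyrogroup the \emph{global} quotient map $\pi:G\to G/H$ is already perfect: fibers $x\oplus H$ are compact, and if $F\subseteq G$ is closed with $x\notin F\oplus H$ then $(x\oplus H)\cap F=\emptyset$, Theorem~\ref{disjoint} gives $V\in\mathscr U$ with $(V\oplus(x\oplus H))\cap F=\emptyset$, and the strong-subgyrogroup identity $\mbox{gyr}[v,x](H)=H$ yields $V\oplus(x\oplus H)=(V\oplus x)\oplus H$, so $V\oplus x$ is an open neighborhood of $x$ missing $F\oplus H$. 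With $\pi$ perfect and $G/H$ metrizable (your appeal to Theorem~\ref{metri} is legitimate, since $H$ is admissible generated from $\mathscr U$ and $G/H$ is first-countable), $G$ is directly a perfect preimage of a metrizable space, hence a paracompact $p$-space---no localization or gluing required.

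For $(2)\Rightarrow(1)$ your sketch is vaguer than it needs to be: simply cite the general fact that every Tychonoff $p$-space is of countable type \cite[Proposition~4.3.4]{AA}, so any singleton in $G$ lies in a compact set of countable character and $G$ is feathered.
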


\begin{theorem}
Let $G$ be a strongly topological gyrogroup and $H$ a locally compact strong subgyrogroup of $G$. If the quotient space $G/H$ is a feathered space, then $G$ is a paracompact $p$-space.
\end{theorem}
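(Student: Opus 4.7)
The plan is to show that $G$ is feathered; Lemma~\ref{paracompact} will then yield at once that $G$ is a paracompact $p$-space.

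Applying Theorem~\ref{perfect2}, I fix an open neighbourhood $U$ of $0$ such that $f:=\pi|_{\overline{U}}\colon\overline{U}\to\pi(\overline{U})$ is a perfect mapping onto the closed subspace $\pi(\overline{U})$ of $G/H$. Since $G/H$ is feathered and is homogeneous by Lemma~\ref{homogeneous}, it is of point-countable type, so I fix a compact $F\subseteq G/H$ with $\pi(0)\in F$ and $F$ of countable character in $G/H$, with countable base $\{V_{n}:n\in\omega\}$ of open neighbourhoods of $F$. Because $\pi(\overline{U})$ is closed in $G/H$, a routine argument (given a neighbourhood $W$ of $F\cap\pi(\overline{U})$ in $\pi(\overline{U})$, adjoin $G/H\setminus\pi(\overline{U})$ to extend to a neighbourhood of $F$ in $G/H$) shows that $\{V_{n}\cap\pi(\overline{U}):n\in\omega\}$ is a countable base of $F':=F\cap\pi(\overline{U})$ in $\pi(\overline{U})$. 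The perfect-map property then gives that $K:=f^{-1}(F')=\pi^{-1}(F)\cap\overline{U}$ is compact in $\overline{U}$ with countable base $\{\pi^{-1}(V_{n})\cap\overline{U}:n\in\omega\}$ of open neighbourhoods in $\overline{U}$, and $0\in K$.

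The decisive step is to convert this into a countable base of $K$ in $G$ itself. Since $f^{-1}(\pi(0))=H\cap\overline{U}$ is compact by the construction of $U$ in the proof of Theorem~\ref{perfect2}, a slight refinement of $U$ ensures $H\cap\overline{U}\subseteq U$, and then the closedness of $f$ gives that $\pi(\overline{U})\setminus f(\overline{U}\setminus U)$ is an open neighbourhood of $\pi(0)$ in $\pi(\overline{U})$ whose $f$-preimage lies in $U$. Replacing $F$ by a compact set of countable character in $G/H$ contained inside this neighbourhood, I may assume $K\subseteq U$. Then each $\pi^{-1}(V_{n})\cap U$ is open in $G$ and contains $K$; for any open $W\supseteq K$ in $G$, the set $W\cap\overline{U}$ is a neighbourhood of $K$ in $\overline{U}$, so it contains $\pi^{-1}(V_{n})\cap\overline{U}$ for some $n$, whence $\pi^{-1}(V_{n})\cap U\subseteq W\cap U\subseteq W$. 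Thus $\{\pi^{-1}(V_{n})\cap U:n\in\omega\}$ is a countable base of $K$ in $G$, so $G$ is feathered and Lemma~\ref{paracompact} completes the argument.

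The main obstacle is the shrinking of $F$ inside the prescribed open neighbourhood of $\pi(0)$: the plain hypothesis that $G/H$ is feathered guarantees only the existence of \emph{some} compact of countable character containing $\pi(0)$, not arbitrarily small ones. To handle this I would invoke Corollary~\ref{property} with $\mathscr P$ the property of being a paracompact $p$-space, which is preserved under perfect preimages and inherited by regular closed subsets, so that $\overline{U}$ is itself a paracompact $p$-space. A perfect mapping of $\overline{U}$ onto a metric space then yields compact fibres of countable character, and by shrinking the target open set one places such a fibre inside $U$, delivering the desired compact set of countable character in $G$.
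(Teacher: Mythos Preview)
Your overall strategy coincides with the paper's: obtain the perfect map $\overline U\to\pi(\overline U)$ from Theorem~\ref{perfect2}, transfer the feathered property to $\overline U$, locate a compact set of countable character in $G$, and finish with Lemma~\ref{paracompact}. The paper does this in two lines: feathered spaces are closed under closed subspaces and under perfect preimages (Proposition~4.3.36 of \cite{AA}), so $\overline U$ is feathered, hence $U$ contains a compact set of countable character in $G$. You unpack the argument explicitly and correctly isolate the delicate point---arranging the compact set to lie inside the open $U$ so that countable character in $\overline U$ upgrades to countable character in $G$---which the paper simply absorbs into its citation.

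Your proposed resolution of that point, however, overshoots. Invoking Corollary~\ref{property} with $\mathscr P=$ ``paracompact $p$-space'' presupposes that $G/H$ itself is a paracompact $p$-space, but the hypothesis says only that $G/H$ is feathered; since $G/H$ is a coset space and not a gyrogroup, Lemma~\ref{paracompact} cannot be invoked to identify these two notions, and for general spaces they are not equivalent. The fix is exactly what the paper does implicitly: take $\mathscr P$ to be the feathered property itself in Corollary~\ref{property} (it is preserved under closed subspaces and perfect preimages), conclude that $\overline U$ is feathered, and then appeal to the standard fact from the theory around Proposition~4.3.36 of \cite{AA} that in a feathered space one can find compact sets of countable character inside any prescribed open set. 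That is precisely the ``arbitrarily small'' compact your construction was missing, and it avoids the unwarranted paracompactness assumption on $G/H$.
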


\begin{proof}
By Theorem \ref{perfect2}, there exists an open neighborhood $U$ of the identity element $0$ in $G$ such that $\overline{U}$ is a preimage of a closed subset of $G/H$ under a perfect mapping. Moreover, since the class of feathered spaces is closed under taking closed subspaces, it follows from \cite[Proposition 4.3.36]{AA} that $\overline{U}$ is a feathered space. Therefore, $U$ contains a non-empty compact subspace $F$ with a countable base of neighborhoods in $G$, thus $G$ is a paracompact $p$-space by Lemma \ref{paracompact}.
\end{proof}

\section{Quotient spaces with locally compact and metrizable strong subgyrogroups}
In this section, we give some applications about Theorem \ref{perfect2} combining generalized metric properties. In particular, we assume that the strong subgyrogroup $H$ of a strongly topological gyrogroup $G$ is locally compact and metrizable.

\begin{definition}\cite{SF}
Let $X$ be a topological space. A space is called {\it strictly Fr\'echet-Urysohn at a point $x\in X$} if whenever $\{A_{n}\}_{n}$ is a sequence of subsets in $X$ and $x\in \bigcap _{n\in \mathbb{N}}\overline{A_{n}}$, there exists $x_{n}\in A_{n}$ for each $n\in \mathbb{N}$ such that the sequence $\{x_{n}\}_{n}$ converges to $x$. A space $X$ is called {\it strictly Fr\'echet-Urysohn} if it is strictly Fr\'echet-Urysohn at every point $x\in X$.
\end{definition}

\begin{lemma}\cite{LS}\label{i1}
Suppose that $X$ is a regular space, and that $f: X\rightarrow Y$ is a closed mapping. Suppose also that $b\in X$ is a $G_{\delta}$-point in the space $F=f^{-1}(f(b))$ (i.e., the singleton $\{b\}$ is a $G_{\delta}$-set in the space $F$) and $F$ is countably compact and strictly Fr\'echet-Urysohn at $b$. If the space $Y$ is strictly Fr\'echet-Urysohn at $f(b)$, then $X$ is strictly Fr\'echet-Urysohn at $b$.
\end{lemma}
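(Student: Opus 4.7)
The plan is to fix a sequence $\{A_n\}_{n\in\mathbb{N}}$ of subsets of $X$ with $b\in\bigcap_n\overline{A_n}$ and produce points $x_n\in A_n$ with $x_n\to b$. First I would use the $G_\delta$-property of $\{b\}$ in $F$, together with the countable compactness of $F$ and the regularity inherited from $X$, to promote $\{b\}$ to a countable nested neighborhood base of $b$ in $F$: starting from $\{b\}=\bigcap_n W_n^F$ with each $W_n^F$ open in $F$, I shrink by regularity to open $V_n^F\subseteq F$ with $\overline{V_{n+1}^F}^F\subseteq V_n^F$, and argue by extracting an accumulation point in $F$ of any would-be witness sequence $z_n\in V_n^F\setminus W$ that $\{V_n^F\}$ is in fact a base at $b$ in $F$.

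Next I would transfer this base to $X$: pick open sets $V_n\subseteq X$ with $V_n\cap F=V_n^F$, and by regularity of $X$ arrange the chain $\overline{V_{n+1}}\subseteq V_n$. Replacing each $A_n$ by $A_n\cap V_n$ preserves $b\in\overline{A_n}$, since $V_n$ is an open neighborhood of $b$. The closedness of $f$ in the form $f(\overline{A_n})=\overline{f(A_n)}$ then gives $f(b)\in\overline{f(A_n)}$, so the strict Fr\'echet-Urysohn property of $Y$ at $f(b)$ yields $y_n\in f(A_n)$ with $y_n\to f(b)$. I then select $x_n\in A_n\cap f^{-1}(y_n)$; by construction these points automatically lie in $V_n$.

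Finally, I would verify $x_n\to b$ by contradiction. Suppose some open $O\ni b$ misses infinitely many $x_n$ and pass to a subsequence $\{x_{n_k}\}\subseteq X\setminus O$. Applying closedness of $f$ to the set $C=\{x_{n_k}:k\in\mathbb{N}\}$ gives $f(b)\in\overline{f(C)}=f(\overline{C})$, so there exists $c\in\overline{C}\cap F$. Two cases arise: if infinitely many $x_{n_k}$ already lie in $F$, they belong to $V_{n_k}^F$ and converge to $b$ in $F$ by the base property, contradicting $x_{n_k}\notin O$; otherwise $c$ is a proper accumulation point of $C$, and the fact that every tail $\{x_{n_j}:j\geq K\}\subseteq V_{n_K}$ combined with $\overline{V_{N+1}}\subseteq V_N$ forces $c\in\bigcap_N V_N$, whence $c\in F\cap\bigcap_N V_N=\{b\}$; but then $b$ is itself an accumulation point of $C\subseteq X\setminus O$, contradicting that $O$ is an open neighborhood of $b$.

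The main obstacle is the last step: the delicate deduction $c=b$ in the case $c\notin C$, which requires chaining the nesting $\overline{V_{N+1}}\subseteq V_N$ with the observation that a proper accumulation point survives the removal of any finite initial segment of $C$, and then collapsing the intersection against $F$. I note in passing that the hypothesis ``$F$ is strictly Fr\'echet-Urysohn at $b$'' is a convenience: countable compactness of $F$ together with $\{b\}$ being $G_\delta$ in the regular space $F$ already yields first-countability of $F$ at $b$, so the stated hypothesis may be regarded either as redundant or as permitting a more direct proof that avoids explicitly constructing the base $\{V_n^F\}$.
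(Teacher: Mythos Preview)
The paper does not prove this lemma; it is quoted from \cite{LS} and invoked as a black box in the theorem that follows. There is therefore no in-paper argument to compare your proposal against.

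On its own merits your argument is correct, with three small points to tighten. First, the implication $b\in\overline{A_n}\Rightarrow f(b)\in\overline{f(A_n)}$ is continuity of $f$, not closedness; closedness is what you need later for $\overline{f(C)}\subseteq f(\overline{C})$. In the paper's application $f$ is a restriction of the continuous quotient map $\pi$, so both hold, but the attributions should be kept straight. Second, demanding simultaneously $V_n\cap F=V_n^F$ and $\overline{V_{n+1}}\subseteq V_n$ in $X$ is slightly more than one can arrange directly; it is cleaner to construct the $V_n$ in $X$ from the outset (choose $U_n$ open in $X$ with $\{b\}=F\cap\bigcap_n U_n$, then shrink by regularity of $X$ to get $\overline{V_{n+1}}\subseteq V_n\cap U_{n+1}$) and rerun the countable-compactness argument to see that $\{V_n\cap F\}$ is still a base at $b$ in $F$. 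Third, in the final dichotomy, if only finitely many $x_{n_k}$ lie in $F$, discard them first so that $C\cap F=\emptyset$; then any $c\in\overline{C}\cap F$ obtained from closedness is automatically a proper accumulation point, and your nesting argument $c\in\bigcap_N\overline{V_{N+1}}\subseteq\bigcap_N V_N$ together with $c\in F$ forces $c=b$. Your closing observation that the hypothesis ``$F$ is strictly Fr\'echet--Urysohn at $b$'' is redundant in the presence of countable compactness, regularity, and the $G_\delta$ condition is also correct.
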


\begin{theorem}
Let $G$ be a strongly topological gyrogroup and $H$ a locally compact metrizable strong subgyrogroup of $G$. If the quotient space $G/H$ is strictly Fr\'echet-Urysohn, then $G$ is also strictly Fr\'echet-Urysohn.
\end{theorem}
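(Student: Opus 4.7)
The plan is to combine Theorem \ref{perfect2} with Lemma \ref{i1}. Since $G$ is a topological gyrogroup, left gyrotranslations are homeomorphisms, so $G$ is homogeneous; hence it suffices to verify that $G$ is strictly Fr\'echet-Urysohn at the identity element $0$. By Theorem \ref{perfect2}, I would fix an open neighborhood $U$ of $0$ such that $\pi(\overline{U})$ is closed in $G/H$ and the restriction $f = \pi|_{\overline{U}}$ is a perfect map from $\overline{U}$ onto $\pi(\overline{U})$. I will apply Lemma \ref{i1} to this map $f$ at the point $b = 0 \in \overline{U}$.

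The hypotheses of Lemma \ref{i1} fall out as follows. The space $\overline{U}$ is regular as a subspace of the (regular) strongly topological gyrogroup $G$. The fibre is
$$F = f^{-1}(f(0)) = \overline{U} \cap \pi^{-1}(\pi(0)) = \overline{U} \cap H;$$
tracing the construction in the proof of Theorem \ref{perfect2}, $\overline{U}$ lies in a set $P$ with $\overline{P \oplus (P \oplus P)} \cap H$ compact, and $F$ is closed in $G$ since $H$ is closed by Proposition \ref{closed}, so $F$ is compact. Because $H$ is metrizable, $F$ is a compact metrizable space, hence first-countable. In particular, $\{0\}$ is a $G_\delta$-set in $F$, and $F$ is strictly Fr\'echet-Urysohn at $0$ (indeed at every point). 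Finally, $\pi(\overline{U})$ is a subspace of $G/H$, and strict Fr\'echet-Urysohn passes to arbitrary subspaces because closures in the subspace coincide with intersections of ambient closures with the subspace; hence $\pi(\overline{U})$ is strictly Fr\'echet-Urysohn at $\pi(0) = f(0)$.

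Lemma \ref{i1} then gives that $\overline{U}$ is strictly Fr\'echet-Urysohn at $0$. To promote this to $G$, I would take any sequence $\{A_n\}_{n \in \mathbb{N}}$ of subsets of $G$ with $0 \in \bigcap_{n \in \mathbb{N}}\overline{A_n}$ and set $B_n = A_n \cap U$. Since $U$ is open and $0 \in U$, every open neighborhood $W$ of $0$ satisfies $(W \cap U) \cap A_n \neq \emptyset$, so $0 \in \overline{B_n}$ for each $n$; because $B_n \subseteq U \subseteq \overline{U}$, the strict Fr\'echet-Urysohn property of $\overline{U}$ at $0$ yields $b_n \in B_n \subseteq A_n$ with $b_n \to 0$ in $\overline{U}$, hence in $G$. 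Thus $G$ is strictly Fr\'echet-Urysohn at $0$, and by homogeneity at every point.

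The main obstacle I anticipate is the careful bookkeeping for the fibre $F = \overline{U} \cap H$: one must check that the neighborhood $U$ produced by Theorem \ref{perfect2} can be arranged so that $\overline{U} \cap H$ sits inside the compact set $\overline{V \cap H}$ arising from local compactness of $H$. Once this containment is established, metrizability of $H$ supplies the $G_\delta$ and first-countability conditions for free, and the remaining work is done by Lemma \ref{i1} together with the elementary localization argument in the previous paragraph.
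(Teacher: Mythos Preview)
Your proposal is correct and follows essentially the same route as the paper: invoke Theorem~\ref{perfect2} to obtain a perfect restriction $f=\pi|_{\overline U}$, then apply Lemma~\ref{i1}, and finally pass from a local to a global conclusion by homogeneity. The only cosmetic difference is that the paper applies Lemma~\ref{i1} at every $b\in\overline U$ (so that $\overline U$ itself is strictly Fr\'echet--Urysohn) and then says ``locally strictly Fr\'echet--Urysohn implies strictly Fr\'echet--Urysohn'', whereas you apply it at $b=0$ only and write out the explicit localization $B_n=A_n\cap U$; both are fine.

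One remark: your anticipated ``main obstacle'' is not an obstacle at all. The fibre $F=f^{-1}(f(0))$ is compact simply because $f$ is \emph{perfect}, so there is no need to trace through the construction of $U$ in Theorem~\ref{perfect2} or to arrange $\overline U\cap H\subseteq\overline{V\cap H}$. Metrizability of $F$ then follows since $F=\overline U\cap H$ is a subspace of the metrizable space $H$. With that observation your argument is complete and matches the paper's.
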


\begin{proof}
By Theorem \ref{perfect2}, there exists an open neighborhood $U$ of the identity element $0$ in $G$ such that $\pi |_{\overline{U}}:\overline{U}\rightarrow \pi (\overline{U})$ is a perfect mapping and $\pi (\overline{U})$ is closed in $G/H$.

Put $f=\pi |_{\overline{U}}:\overline{U}\rightarrow \pi (\overline{U})$. Then $f(\overline{U})=\pi (\overline{U})$ is strictly Fr\'echet-Urysohn. For each $b\in \overline{U}$, $f^{-1}(f(b))=\pi^{-1}(\pi (b))\cap \overline{U}=(b\oplus H)\cap \overline{U}$ is compact and metrizable. It follows from Lemma \ref{i1} that $\overline{U}$ is strictly Fr\'echet-Urysohn. Therefore, $G$ is locally strictly Fr\'echet-Urysohn and $G$ is strictly Fr\'echet-Urysohn.
\end{proof}

\begin{lemma}\cite[Proposition 4.7.18]{AA}\label{i}
Suppose that $X$ is a regular space, and that $f: X\rightarrow Y$ is a closed mapping. Suppose also that $b\in X$ is a $G_{\delta}$-point in the space $F=f^{-1}(f(b))$ (i.e., the singleton $\{b\}$ is a $G_{\delta}$-set in the space $F$) and $F$ is Fr\'echet-Urysohn at $b$. If the space $Y$ is strongly Fr\'echet-Urysohn, then $X$ is Fr\'echet-Urysohn at $b$.
\end{lemma}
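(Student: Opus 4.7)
The plan is to show that for any $A\subseteq X$ with $b\in\overline{A}$, some sequence in $A$ converges to $b$, separating two cases according to whether $b\in\overline{A\cap F}$ or not.

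If $b\in\overline{A\cap F}$, then since $A\cap F\subseteq F$ and $F$ is Fr\'echet-Urysohn at $b$, there is a sequence in $A\cap F\subseteq A$ converging to $b$ in $F$ (hence in $X$), finishing this case. Otherwise I pick an open neighborhood $W$ of $b$ with $W\cap A\cap F=\emptyset$ and replace $A$ by $A\cap W$, reducing to the situation $A\cap F=\emptyset$, i.e.\ $f(b)\notin f(A)$.

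Using the hypothesis that $\{b\}$ is a $G_{\delta}$ in $F$ together with regularity of $X$, I would construct a decreasing sequence of open sets $V_{n}$ in $X$ with $b\in V_{n}$, $\overline{V_{n+1}}\subseteq V_{n}$, and $F\cap\bigcap_{n}V_{n}=\{b\}$; by the nestedness also $F\cap\bigcap_{n}\overline{V_{n}}=\{b\}$. Since each $V_{n}$ is an open neighborhood of $b$ and $b\in\overline{A}$, we have $b\in\overline{A\cap V_{n}}$, and by continuity $f(b)\in\overline{f(A\cap V_{n})}$ for every $n$. The sets $f(A\cap V_{n})$ are decreasing, so the strongly Fr\'echet-Urysohn property of $Y$ at $f(b)$ yields points $y_{n}\in f(A\cap V_{n})$ with $y_{n}\to f(b)$; I then lift them to $a_{n}\in A\cap V_{n}$ with $f(a_{n})=y_{n}$. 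Note that $a_{n}\in A$ forces $a_{n}\notin F$, so $y_{n}\neq f(b)$ for all $n$.

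The main obstacle is the final step, showing $a_{n}\to b$ in $X$. I would argue by contradiction, assuming some subsequence $\{a_{n_{k}}\}$ avoids a fixed open neighborhood $U$ of $b$, and setting $T=\{a_{n_{k}}:k\in\mathbb{N}\}\subseteq X\setminus U$. Since $f$ is closed, $f(\overline{T})$ is closed in $Y$, and $y_{n_{k}}=f(a_{n_{k}})\to f(b)$ forces $f(b)\in f(\overline{T})$; thus some $t\in\overline{T}\cap F$ exists. If $t\in T$ then $f(t)=f(b)$ contradicts $A\cap F=\emptyset$. Otherwise $t$ is a genuine limit point of $T$, and since for every $m$ all but finitely many $a_{n_{k}}$ lie in $V_{n_{k}}\subseteq V_{m}$ (because $n_{k}\to\infty$), one gets $t\in\bigcap_{m}\overline{V_{m}}\cap F=\{b\}$; this contradicts $t\in\overline{T}\subseteq X\setminus U$ and $b\in U$. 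The crucial technical point in the whole argument is the regularity-shrinking $\overline{V_{n+1}}\subseteq V_{n}$, which is what keeps $\bigcap_{m}\overline{V_{m}}\cap F$ equal to $\{b\}$; without this refinement the cluster-point identification in the final contradiction step would fail.
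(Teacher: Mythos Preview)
The paper does not supply its own proof of this lemma; it is quoted verbatim from Arhangel'ski\u{\i}--Tkachenko \cite{AA}, Proposition~4.7.18. Your argument is correct and is essentially the standard proof of that proposition: the case split on whether $b\in\overline{A\cap F}$, the regularity-based shrinking $\overline{V_{n+1}}\subseteq V_{n}$ to control $F\cap\bigcap_{n}\overline{V_{n}}$, the application of the strongly Fr\'echet--Urysohn property of $Y$ to the decreasing images $f(A\cap V_{n})$, and the closed-map argument forcing a cluster point of $\{a_{n_{k}}\}$ into $F\cap\bigcap_{m}\overline{V_{m}}=\{b\}$ are exactly the ingredients used in the source.
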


\begin{theorem}
Let $G$ be a strongly topological gyrogroup and $H$ a locally compact metrizable strong subgyrogroup of $G$. If the quotient space $G/H$ is strongly Fr\'echet-Urysohn, then the space $G$ is also strongly Fr\'echet-Urysohn.
\end{theorem}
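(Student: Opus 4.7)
The plan is to mimic exactly the argument used for the strictly Fréchet-Urysohn theorem above, localising the problem to a regular closed neighbourhood of $0$ via Theorem~\ref{perfect2} and then invoking a perfect-preimage-style lemma (the strongly Fréchet-Urysohn analogue of the ones used earlier in this section). First I would apply Theorem~\ref{perfect2} to obtain an open neighbourhood $U$ of $0$ such that $\pi(\overline{U})$ is closed in $G/H$ and $f:=\pi|_{\overline{U}}\colon \overline{U}\to \pi(\overline{U})$ is a perfect mapping. Since the property of being strongly Fréchet-Urysohn is inherited by closed subspaces, $f(\overline{U})=\pi(\overline{U})$ is strongly Fréchet-Urysohn as a closed subspace of $G/H$.

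Next I would verify the required properties of the fibers. For each $b\in\overline{U}$, the fiber $f^{-1}(f(b))=(b\oplus H)\cap\overline{U}$ is compact by perfectness of $f$, and it is also metrizable: by \cite[Proposition~3]{AW} the left gyrotranslation $L_{b}$ is a homeomorphism of $G$, so $(b\oplus H)\cap\overline{U}$ is homeomorphic to the closed subset $H\cap(\ominus b\oplus\overline{U})$ of the metrizable space $H$. In particular each point of the fiber is a $G_{\delta}$-point in the fiber, and the fiber, being compact metrizable, is strongly Fréchet-Urysohn at each of its points. These are exactly the fiber-side hypotheses needed in the perfect-map lifting lemma of Arhangel'skii--Tkachenko (the strongly Fréchet-Urysohn counterpart of Lemma~\ref{i} and Lemma~\ref{i1}).

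Applying that lemma with $X=\overline{U}$, $Y=\pi(\overline{U})$ and $f$ as above, I conclude that $\overline{U}$ is strongly Fréchet-Urysohn at every point $b\in\overline{U}$; in particular $G$ is strongly Fréchet-Urysohn at $0$, because $U\subseteq\overline{U}$ is an open neighbourhood of $0$ and condition (SFU) is a local property. Finally, since left gyrotranslations are homeomorphisms of $G$ (\cite[Proposition~3]{AW}), $G$ is topologically homogeneous, so being strongly Fréchet-Urysohn at $0$ forces $G$ to be strongly Fréchet-Urysohn at every point, finishing the proof.

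The main obstacle I anticipate is the careful invocation of the appropriate strongly Fréchet-Urysohn lifting lemma: Lemma~\ref{i} as stated here only gives the Fréchet-Urysohn conclusion from a strongly Fréchet-Urysohn target, so I would either cite the strengthened variant in Arhangel'skii--Tkachenko, or, if it is preferred to do it by hand, run the following argument. Given a sequence $\{A_{n}:n\in\mathbb{N}\}$ of subsets of $\overline{U}$ with $b\in\bigcap_{n\in\mathbb{N}}\overline{A_{n}}$, we have $f(b)\in\bigcap_{n\in\mathbb{N}}\overline{f(A_{n})}$ since $f$ is continuous; applying (SFU) in $\pi(\overline{U})$ produces a sequence $\{y_{n}\}$ converging to $f(b)$ that meets infinitely many $f(A_{n})$. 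Lifting each $y_{n}$ that hits $f(A_{k_{n}})$ to a point $a_{n}\in A_{k_{n}}$ with $f(a_{n})=y_{n}$, the closedness of $f$ together with compactness of $F=f^{-1}(f(b))$ forces every cluster point of $\{a_{n}\}$ to lie in $F$; using that $b$ has a countable neighborhood base in the metrizable $F$, a diagonal choice extracts a subsequence converging to $b$ in $\overline{U}$ that still meets infinitely many $A_{n}$, verifying (SFU) at $b$.
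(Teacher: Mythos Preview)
Your setup via Theorem~\ref{perfect2} and the fiber analysis agree exactly with the paper. The divergence is in the key lifting step. The paper does \emph{not} try to lift the strongly Fr\'echet-Urysohn property directly through the perfect map. Instead it applies Lemma~\ref{i} verbatim to conclude only that $\overline{U}$ (hence $G$) is Fr\'echet-Urysohn, and then invokes the gyrogroup-specific fact \cite[Corollary 5.2]{LF1} that every Fr\'echet-Urysohn topological gyrogroup is strongly Fr\'echet-Urysohn. This two-step detour avoids any need for a strengthened lifting lemma.

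Your alternative ``by hand'' argument has a genuine gap. After lifting $y_{n}\in f(A_{k_{n}})$ to $a_{n}\in A_{k_{n}}$, perfectness of $f$ guarantees that every cluster point of $\{a_{n}\}$ lies in $F=f^{-1}(f(b))$, but it gives no reason why $b$ itself should be a cluster point; the sequence might accumulate only at some other point $c\in F$. Your sentence ``using that $b$ has a countable neighborhood base in the metrizable $F$, a diagonal choice extracts a subsequence converging to $b$'' is exactly the unjustified step: a countable base at $b$ in $F$ does not help unless you already know $b\in\overline{\{a_{n}\}}$. Making this work would require a more delicate construction (closer in spirit to the proof of Lemma~\ref{i1}, where one uses the $G_{\delta}$-property and countable compactness of $F$ to force accumulation at $b$ from the start), and even then one typically obtains only the Fr\'echet-Urysohn conclusion, not (SFU) directly. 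The paper's route through Fr\'echet-Urysohn plus the gyrogroup upgrade is both shorter and avoids this difficulty entirely.
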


\begin{proof}
By Theorem \ref{perfect2}, there exists an open neighborhood $U$ of the identity element $0$ in $G$ such that $\pi |_{\overline{U}}:\overline{U}\rightarrow \pi (\overline{U})$ is a perfect mapping and $\pi (\overline{U})$ is closed in $G/H$.

Put $f=\pi |_{\overline{U}}:\overline{U}\rightarrow \pi (\overline{U})$. Then $f(\overline{U})=\pi (\overline{U})$ is strongly Fr\'echet-Urysohn. For each $b\in \overline{U}$, $f^{-1}(f(b))=\pi^{-1}(\pi (b))\cap \overline{U}=(b\oplus H)\cap \overline{U}$ is metrizable. Therefore, the singleton $\{b\}$ is a $G_{\delta}$-set in the space $f^{-1}(f(b))$. Moreover, since the quotient space $G/H$ is strongly Fr\'echet-Urysohn, the space $G$ is locally Fr\'echet-Urysohn by Lemma \ref{i}. Hence, $G$ is Fr\'echet-Urysohn. Furthermore, every Fr\'echet-Urysohn topological gyrogroup is strongly Fr\'echet-Urysohn by \cite[Corollary 5.2]{LF1}. So $G$ is strongly Fr\'echet-Urysohn.
\end{proof}

\begin{theorem}\label{4dl5}
Let $G$ be a strongly topological gyrogroup and $H$ a locally compact metrizable strong subgyrogroup of $G$. If the quotient space $G/H$ is sequential, then $G$ is also sequential.
\end{theorem}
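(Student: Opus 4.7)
The plan is to follow the template of the two preceding theorems in this section: use Theorem~\ref{perfect2} to locally realize $\pi$ as a perfect mapping with compact metrizable fibers, transfer sequentiality across this map, and then globalize using homogeneity of $G$.

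By Theorem~\ref{perfect2} there exists an open neighborhood $U$ of the identity $0$ in $G$ such that $\pi(\overline{U})$ is closed in $G/H$ and the restriction $f:=\pi|_{\overline{U}}\colon\overline{U}\to\pi(\overline{U})$ is a perfect mapping. Since closed subspaces of sequential spaces are sequential, the hypothesis that $G/H$ is sequential forces $\pi(\overline{U})$ to be sequential. For any $b\in\overline{U}$, the fiber $f^{-1}(f(b))=(b\oplus H)\cap\overline{U}$ is closed in the left coset $b\oplus H$; because the left gyrotranslation $x\mapsto b\oplus x$ is a homeomorphism of $G$ by \cite[Proposition 3]{AW} and $H$ is metrizable, the coset $b\oplus H$ is metrizable, so each fiber is compact and metrizable, in particular first-countable.

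Next, I would invoke the classical preservation theorem (parallel in spirit to Lemmas~\ref{i1} and \ref{i}): if $f\colon X\to Y$ is a perfect mapping from a regular space $X$ onto a sequential space $Y$ and every fiber $f^{-1}(y)$ is first-countable, then $X$ is itself sequential. This is a well-known fact in the theory of generalized metric spaces, available in \cite{LS} or in the standard texts. Applied to $f\colon\overline{U}\to\pi(\overline{U})$, it shows that $\overline{U}$ is sequential, and hence the open subspace $U$ of $\overline{U}$ is sequential as well.

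Finally, globalize by homogeneity. For every $a\in G$, left gyrotranslation by $a$ is a homeomorphism of $G$, so $a\oplus U$ is an open sequential neighborhood of $a$; thus $\{a\oplus U:a\in G\}$ is an open cover of $G$ by sequential subspaces. A routine verification then shows that $G$ itself must be sequential: if $A\subseteq G$ is sequentially closed and $x\in\overline{A}$, choose $a\in G$ with $x\in a\oplus U$; then $A\cap(a\oplus U)$ is sequentially closed in the open sequential set $a\oplus U$ (any sequence converging inside $a\oplus U$ also converges in $G$), and $x$ lies in its closure within $a\oplus U$ because $a\oplus U$ is open in $G$, so sequentiality of $a\oplus U$ forces $x\in A$. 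The main technical hurdle is pinpointing and citing the correct perfect-map preservation result for sequential spaces with first-countable fibers; once this is in place, the argument mirrors the strategy of the two preceding theorems exactly.
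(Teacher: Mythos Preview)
Your overall architecture matches the paper's exactly: invoke Theorem~\ref{perfect2} to get a local perfect map with compact metrizable fibers, transfer sequentiality across it, and globalize via homogeneity. The globalization step you sketch (locally sequential $\Rightarrow$ sequential) is correct and the paper does the same.

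The difference is entirely in the middle step, and your self-identified ``main technical hurdle'' is the real content of the theorem. The paper does \emph{not} cite a black-box preservation result; instead it proves the transfer directly, and the argument is not short. Concretely, the paper first passes to a smaller $V$ with $\overline{V}\subseteq U$ and establishes: (i) if $\{x_n\}\subseteq\overline{U}$ has $\{\pi(x_n)\}$ convergent and $x$ is an accumulation point of $\{x_n\}$, then some subsequence converges to $x$ (this uses metrizability of the fiber to produce open sets $U_k$ in $G$ with $\overline{U_{k+1}}\subseteq U_k$ and $\{x\}=F\cap\bigcap_k U_k$, then perfectness to force the extracted subsequence to converge); (ii) using (i), the $\pi$-image of any sequentially closed subset of $\overline{V}$ is sequentially closed, hence closed, in $\pi(\overline{V})$; (iii) a contradiction argument: if $A\subseteq\overline{V}$ is sequentially closed but not closed, one separates $x\in\overline{A}\setminus A$ from $B=A\cap f^{-1}(f(x))$ (closed because the fiber is sequential), trims $A$ to a set $C$ disjoint from $f^{-1}(f(x))$, and then (ii) forces $f(C)$ closed while $f(x)\in\overline{f(C)}\setminus f(C)$.

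The general statement you want to cite --- perfect map from a regular space onto a sequential space with first-countable fibers has sequential domain --- is not a standard off-the-shelf theorem, and the paper's authors evidently did not find a clean reference either (they cite \cite{LS} only as work being \emph{improved}, not as a source for this lemma). So your proposal has the right skeleton but defers precisely the part that constitutes the proof; if you cannot locate an exact citation, you will need to supply an argument essentially equivalent to the paper's steps (i)--(iii) above.
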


\begin{proof}
By Theorem \ref{perfect2}, there exists an open neighborhood $U$ of the identity element $0$ in $G$ such that $\pi |_{\overline{U}}:\overline{U}\rightarrow \pi (\overline{U})$ is a perfect mapping and $\pi (\overline{U})$ is closed in $G/H$.

First, we show that if $\{x_{n}\}_{n}$ is a sequence in $\overline{U}$ such that $\{\pi (x_{n})\}_{n}$ is a convergent sequence in $\pi (\overline{U})$ and $x$ is an accumulation point of the sequence $\{x_{n}\}_{n}$, then there is a subsequence of $\{x_{n}\}_{n}$ which converges to $x$.

Since $\pi |_{\overline{U}}$ is perfect, every subsequence of $\{x_{n}\}_{n}$ has an accumulation point in $\overline{U}$. Put $F=\pi^{-1}(\pi (x))\cap \overline{U}$. Since $H$ is metrizable, $\pi^{-1}(\pi (x))=x\oplus H$ is also metrizable. Since every topological gyrogroup is regular, there exists a sequence $\{U_{k}\}_{k}$ of open subsets in $G$ such that $\overline{U_{k+1}}\subseteq U_{k}$ for each $k\in \mathbb{N}$ and $\{x\}=F\cap \bigcap_{k\in \mathbb{N}}U_{k}$. Choose a subsequence $\{x_{n_{k}}\}_{k}$ of $\{x_{n}\}_{n}$ such that $x_{n_{k}}\in U_{k}$ for each $k\in \mathbb{N}$. For an arbitrary accumulation point $p$ of a subsequence of the sequence $\{x_{n_{k}}\}_{k}$, we have $\pi (p)=\pi (x)$ and $p\in \bigcap _{k\in \mathbb{N}}\overline{U_{k}}$. Thus $p=x$. Therefore, $x$ is the unique accumulation point of every subsequence of $\{x_{n_{k}}\}_{k}$, proving that $x_{n_{k}}\rightarrow x$.

Then choose an open neighborhood $V$ of $0$ such that $\overline{V}\subseteq U$ and we show that $\overline{V}$ is a sequential subspace.

Suppose that $\overline{V}$ is not a sequential subspace, so we can find a non-closed and sequentially closed subset $A$ of $\overline{V}$. Then there exists a point $x$ such that $x\in cl_{\overline{V}}(A)\setminus A$. It is clear that $cl_{\overline{V}}(A)=\overline{A}$. Let $f=\pi |_{\overline{V}}:\overline{V}\rightarrow \pi (\overline{V})$ and $B=A\cap f^{-1}(f(x))$. Since $B$ is a closed subset of $A$, $B$ is sequentially closed. Moreover, the fiber $f^{-1}(f(x))=(\pi^{-1}(\pi(x)))\cap \overline{V}$ is sequential, so $B$ is closed in $\overline{V}$. Since $x\not \in B$, there exists an open neighborhood $W$ of $x$ in $\overline{V}$ such that $\overline{W}\cap B=\emptyset$. Let $C=\overline{W}\cap A$, then $C$ is also sequentially closed as a closed subset of $A$ and $x\in \overline{C}\setminus C$. Therefore, $C\cap f^{-1}(f(x))=\overline{W}\cap B=\emptyset$, then $f(x)\in \overline{f(C)}\setminus f(C)$. So $f(C)=\pi (C)$ is not closed in $\pi (\overline{V})$. However, this is impossible, as it is easy to verify that the image of each sequentially closed subset of $\overline{V}$ is closed in $\pi (\overline{V})$.

Indeed, let $C$ be sequentially closed in $\overline{V}$ and $\{y_{n}\}_{n}$  a sequence in $\pi (C)$ such that $y_{n}\rightarrow y$ in $\pi (\overline{V})$. Choose $x_{n}\in C$ with $\pi (x_{n})=y_{n}$ for each $n\in \mathbb{N}$. Since every subsequence of the sequence $\{x_{n}\}_{n}$ has an accumulation point, there exist a point $x\in \pi^{-1}(y)$ and a subsequence $\{x_{n_{k}}\}_{k}$ of $\{x_{n}\}_{n}$ such that $x_{n_{k}}\rightarrow x$. Since $C$ is sequentially closed, we obtain $x\in C$ and $y\in \pi (C)$. Therefore, $\pi (C)$ is sequentially closed in $\pi (\overline{V})$. Since $\pi |_{\overline{U}}:\overline{U}\rightarrow \pi (\overline{U})$ is a closed mapping and $\pi (\overline{U})$ is closed in $G/H$, $\pi (\overline{V})$ is closed in $G/H$. Since $G/H$ is sequential, $\pi (\overline{V})$ is also sequential and then $\pi (C)$ is closed in $\pi (\overline{V})$.

Since $G$ is homogeneous and $\overline{V}$ is a sequential subspace, we conclude that $G$ is a locally sequential space. Thus, $G$ is a sequential space.
\end{proof}

However, for the property of Fr\'echet-Urysohn, we do not know whether it has the similar result. Therefore, we pose the following question.

\begin{question}
Let $G$ be a strongly topological gyrogroup and $H$ a locally compact metrizable strong subgyrogroup of $G$. If the quotient space $G/H$ is Fr\'echet-Urysohn, is $G$ also Fr\'echet-Urysohn?
\end{question}

\begin{theorem}\label{4dl6}
Let $G$ be a strongly topological gyrogroup and $H$ a locally compact metrizable strong subgyrogroup of $G$. If the quotient space $G/H$ has property $\mathcal{P}$, where $\mathcal{P}$ is a topological property. Then the space $G$ is locally in $\mathcal{P}$ if $\mathcal{P}$ satisfies the following:

\smallskip
(1) $\mathcal{P}$ is closed hereditary;

\smallskip
(2) $\mathcal{P}$ contains point $G_{\delta}$-property, and

\smallskip
(3) let $f:X\rightarrow Y$ be a perfect mapping, if $X$ has $G_{\delta}$-diagonal and $Y$ is $\mathcal{P}$, then $X$ is $\mathcal{P}$.

\end{theorem}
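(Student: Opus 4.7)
My plan is to transfer $\mathcal{P}$ locally from the quotient $G/H$ to $G$ by exploiting the perfect mapping furnished by Theorem~\ref{perfect2} and then invoking the homogeneity of $G$.

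First I would apply Theorem~\ref{perfect2} to obtain an open neighborhood $U$ of the identity $0$ such that $\pi(\overline{U})$ is closed in $G/H$ and $f := \pi|_{\overline{U}}: \overline{U} \to \pi(\overline{U})$ is a perfect mapping. Since $G/H$ has property $\mathcal{P}$ and $\pi(\overline{U})$ is a closed subspace of $G/H$, condition~$(1)$ delivers $\pi(\overline{U}) \in \mathcal{P}$.

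The heart of the argument is to verify that $\overline{U}$ has a $G_{\delta}$-diagonal so that condition~$(3)$ can be applied to $f$. For each $x \in \overline{U}$, the fiber $f^{-1}(f(x)) = (x \oplus H) \cap \overline{U}$ is compact because $f$ is perfect, and it sits as a closed subspace of $x \oplus H$, which is homeomorphic to $H$ via the left gyrotranslation $L_x$ and hence metrizable; in particular each fiber is compact metrizable. By condition~$(2)$, every point of $\pi(\overline{U})$ is a $G_{\delta}$-set, so each fiber $f^{-1}(y)$ is itself a $G_{\delta}$-subset of $\overline{U}$ by continuity of $f$. Combining a compatible metric $d_x$ on the fiber $F_x := f^{-1}(f(x))$ with a descending sequence $(W_n)_{n \in \mathbb{N}}$ of open neighborhoods of $f(x)$ in $\pi(\overline{U})$ satisfying $\bigcap_n W_n = \{f(x)\}$, and using the compactness of $F_x$ via a tube-type argument, I would construct open sets $V_n(x) \subseteq f^{-1}(W_n)$ whose traces on $F_x$ are $d_x$-balls of radius $1/n$ around $x$; patching the $V_n(x)$ into open covers $\mathcal{V}_n$ of $\overline{U}$ yields a $G_{\delta}$-diagonal sequence whose stars shrink to points.

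Once $\overline{U}$ has a $G_{\delta}$-diagonal, $f$ is perfect, and $\pi(\overline{U}) \in \mathcal{P}$, condition~$(3)$ forces $\overline{U} \in \mathcal{P}$. Thus $G$ has $\mathcal{P}$ on the neighborhood $U$ of $0$, and the homogeneity of the topological gyrogroup $G$ (left gyrotranslations $L_a$ are homeomorphisms of $G$) propagates this to every point, giving that $G$ is locally in $\mathcal{P}$. I expect the $G_{\delta}$-diagonal construction to be the principal obstacle: condition~$(2)$ delivers only points-$G_{\delta}$ in the base rather than a full $G_{\delta}$-diagonal on $\pi(\overline{U})$, so the diagonal on $\overline{U}$ must be assembled by merging the fiberwise metrics with the countable local bases at the image points, with the compactness of the fibers (via a finite-refinement argument) supplying the necessary uniformity.
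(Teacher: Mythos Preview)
Your overall strategy matches the paper's up to the point where you need $\overline{U}$ to have a $G_\delta$-diagonal, but your proposed construction of that diagonal cannot succeed. From condition~(2) you extract only that each point of $\pi(\overline{U})$ is a $G_\delta$-set, and you propose to merge this with the compact metrizable fibers to assemble a $G_\delta$-diagonal sequence on $\overline{U}$. But point-$G_\delta$ in the base together with compact metrizable fibers under a perfect map is not enough in general: take $f$ to be the identity map on the two arrows space (Example~\ref{countexample} in this very paper). The fibers are singletons, hence trivially compact metrizable; the base is first-countable, so every point is $G_\delta$; yet the space is compact Hausdorff and non-metrizable, so by \v{S}ne\u{\i}der's theorem it cannot have a $G_\delta$-diagonal. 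Your tube-type patching cannot get around this, because the countable neighborhood systems $\{W_n\}$ at different base points are completely unrelated, and no finite-refinement argument on the fibers produces the needed uniformity across fibers.

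The paper closes this gap by exploiting the gyrogroup structure of $G$ rather than working directly on $\overline{U}$. From condition~(2) one gets that $\{H\}$ is $G_\delta$ in $G/H$, hence $H=\pi^{-1}(\pi(0))$ is $G_\delta$ in $G$; since $H$ is metrizable there is a countable family $\{W_n\}$ of open neighborhoods of $0$ in $G$ with $\{W_n\cap H\}$ a local base at $0$ in $H$, and intersecting gives $\{0\}=\bigcap_n(W_n\cap\pi^{-1}(V_n))$, so $G$ has countable pseudocharacter. The decisive step your proposal misses is then to invoke the result from \cite{BL1} that every strongly topological gyrogroup with countable pseudocharacter is submetrizable. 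This yields a $G_\delta$-diagonal on all of $G$, and hence on the subspace $\overline{U}$, after which conditions~(1) and~(3) applied to the perfect map $\pi|_{\overline{U}}$ finish the argument exactly as you outlined.
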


\begin{proof}
By the hypothesis, since $G/H$ is in $\mathcal{P}$ and $\mathcal{P}$ contains point $G_{\delta}$-property, $\{H\}$ is a $G_{\delta}$-subset in $G/H$, that is, there exists a sequence $\{V_{n}:n\in \mathbb{N}\}$ of open sets in $G/H$ such that $\{H\}=\bigcap_{n\in \mathbb{N}}V_{n}$. Therefore, $H=\bigcap _{n\in \mathbb{N}}\pi^{-1}(V_{n})$. Since $H$ is a metrizable strong subgyrogroup of $G$, there is a family $\{W_{n}:n\in \mathbb{N}\}$ of open neighborhoods of the identity element $0$ such that $\{W_{n}\cap H:n\in \mathbb{N}\}$ is an open countable neighborhood base in $H$. Hence, $$\{0\}=\bigcap_{n\in \mathbb{N}}(W_{n}\cap H)=\bigcap_{n\in \mathbb{N}}(W_{n}\cap \pi^{-1}(V_{n})).$$ Then $G$ has point $G_{\delta}$-property. It follows from \cite{BL1} that every strongly topological gyrogroup with countable pseudocharacter is submetrizable. So $G$ has $G_{\delta}$-diagonal.

By Theorem \ref{perfect2}, there is an open neighborhood $U$ of the identity element $0$ in $G$ such that $\pi |_{\overline{U}}:\overline{U}\rightarrow \pi (\overline{U})$ is a perfect mapping and $\pi (\overline{U})$ is closed in $G/H$. Then by (1) and (3), the subspace $\overline{U}$ is in $\mathcal{P}$. Therefore, $G$ is locally in $\mathcal{P}$.
\end{proof}

It is well-known that all stratifiable spaces, semi-stratifiable spaces and $\sigma$-spaces satisfy the conditions in Theorem \ref{4dl6}, respectively. Moreover, it was claimed in \cite{BLX} and \cite{LS2} that if a strongly topological gyrogroup $G$ has point $G_{\delta}$-property, then $G$ has a $KG$-sequence and if $f:X\rightarrow Y$ is a perfect map and $Y$ is a $k$-semistratifiable space, then $X$ is a $k$-semistratifiable space if and only if $X$ has a $KG$-sequence. Therefore, the following corollary is obtained.

\begin{corollary}
Let $G$ be a strongly topological gyrogroup and $H$ a locally compact metrizable strong subgyrogroup of $G$. If the quotient space $G/H$ is a stratifiable space (semi-stratifiable space, $k$-semistratifiable, $\sigma$-space), then $G$ is a local stratifiable space (semi-stratifiable space, $k$-semistratifiable, $\sigma$-space).
\end{corollary}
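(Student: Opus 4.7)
The plan is to derive each of the four cases from Theorem \ref{4dl6} by checking that the corresponding property $\mathcal{P}$ satisfies the three closure conditions (1)--(3) listed there. For stratifiability, semi-stratifiability, and the $\sigma$-space property, all three conditions are classical. Each of these properties is preserved under closed subspaces (condition (1)); each implies that every singleton is a $G_\delta$ (in fact these are subclasses of perfectly normal / semi-stratifiable spaces, so condition (2) is automatic); and for each of them the preimage theorem under perfect maps with $G_\delta$-diagonal domain is well documented in the generalized metric space literature (for stratifiable/semi-stratifiable spaces the preimage is even unconditional for perfect maps, and for $\sigma$-spaces one uses the $G_\delta$-diagonal of the domain to combine a $\sigma$-network on $Y$ with a $G_\delta$-diagonal sequence on $X$). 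Once (1)--(3) are verified, Theorem \ref{4dl6} directly yields that $G$ is locally $\mathcal{P}$ in each case.

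The $k$-semistratifiable case is the one requiring extra care, and this is where the paper's cited results come in. I would argue as follows. First, as in the proof of Theorem \ref{4dl6}, from $G/H$ being $k$-semistratifiable (hence of countable pseudocharacter at $\pi(0)$) together with the local countable pseudocharacter of the metrizable subgyrogroup $H$, one concludes that $G$ has countable pseudocharacter at $0$; by homogeneity $G$ has point $G_\delta$-property. Then invoking the result from \cite{BL1} that a strongly topological gyrogroup of countable pseudocharacter is submetrizable, we obtain that $G$ has a $G_\delta$-diagonal. Next, by the result from \cite{BLX} cited right after Theorem \ref{4dl6}, $G$ has a $KG$-sequence, so the regular closed neighborhood $\overline{U}$ provided by Theorem \ref{perfect2} inherits a $KG$-sequence as well.

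Now apply Theorem \ref{perfect2} to produce the perfect map $\pi\!\restriction_{\overline{U}}\colon \overline{U}\to \pi(\overline{U})$, where $\pi(\overline{U})$ is closed in $G/H$ and hence $k$-semistratifiable. By the preimage theorem from \cite{LS2} cited in the paper, a perfect preimage of a $k$-semistratifiable space is $k$-semistratifiable provided the domain carries a $KG$-sequence; since $\overline{U}$ has this property, $\overline{U}$ is $k$-semistratifiable. Homogeneity of $G$ then upgrades this to $G$ being locally $k$-semistratifiable, completing this case.

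The main obstacle I expect is the verification of condition (3) uniformly across the four cases: stratifiability and $\sigma$-space require that one actually has a $G_\delta$-diagonal in the domain (or something equivalent) to run the perfect-preimage argument, whereas semi-stratifiability is robust under arbitrary perfect preimages. Thus the real work is to confirm that Theorem \ref{4dl6} applies, by checking in each case that $G$ has a $G_\delta$-diagonal --- which, as indicated above, follows from submetrizability of strongly topological gyrogroups of countable pseudocharacter --- and that in the $k$-semistratifiable case the extra $KG$-sequence hypothesis is available, which is precisely the content of the cited result in \cite{BLX}. Once these ingredients are in place, the corollary is a direct application of Theorem \ref{4dl6}.
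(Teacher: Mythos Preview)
Your approach matches the paper's exactly: the paragraph preceding the corollary says precisely that stratifiable, semi-stratifiable and $\sigma$-spaces satisfy the three conditions of Theorem~\ref{4dl6}, and that the $k$-semistratifiable case is handled via the cited results from \cite{BLX} (point-$G_\delta$ in a strongly topological gyrogroup yields a $KG$-sequence) and \cite{LS2} (perfect preimage of a $k$-semistratifiable space is $k$-semistratifiable iff the domain has a $KG$-sequence). Your extra care in noting that $\overline{U}$ inherits the $KG$-sequence is a correct and useful detail the paper leaves implicit.

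One small correction to your parenthetical remark: perfect preimages of stratifiable (or semi-stratifiable) spaces are \emph{not} unconditionally stratifiable (semi-stratifiable)---any compact non-metrizable space maps perfectly onto a point---so the $G_\delta$-diagonal hypothesis in condition~(3) is genuinely needed there too, not only for $\sigma$-spaces. This does not affect your argument, since you ultimately invoke condition~(3) as stated.
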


Finally, we pose the following questions.

\begin{question}
Let $G$ be a strongly topological gyrogroup and $H$ a closed strong subgyrogroup of $G$. Is the quotient space $G/H$ completely regular?
\end{question}

\begin{question}
Let $\mathcal{P}$ be any class of topological spaces which is closed hereditary and closed under locally finite unions of closed sets. Is every strongly topological gyrogroup which is locally in $\mathcal{P}$ in $\mathcal{P}$ ?
\end{question}

\section{Quotient spaces with closed first-countable and separable strong subgyrogroups}

In this section, we study the quotient space $G/H$ with some generalized metric properties, where $G$ is a strongly topological gyrogroup and $H$ is a closed first-countable and separable strong subgyrogroup of $G$. In particular, we prove that if the quotient space $G/H$ is an $\aleph_{0}$-space, then $G$ is an $\aleph_{0}$-space; if the quotient space $G/H$ is a cosmic space, then $G$ is also a cosmic space; if the quotient space $G/H$ has a star-countable $cs$-network or star-countable $wcs^{*}$-network, then $G$ also has a star-countable $cs$-network or star-countable $wcs^{*}$-network, respectively.

\begin{definition}\cite{GMT, LS1}
Let $\mathcal{P}$ be a family of subsets of a topological space $X$.

1. $\mathcal{P}$ is called a {\it k-network} for $X$ if whenever $K\subseteq U$ with $K$ compact and $U$ open in $X$, there exists a finite family $\mathcal{P}^{'}\subseteq \mathcal{P}$ such that $K\subseteq \bigcup \mathcal{P}^{'}\subseteq U$.

2. $\mathcal{P}$ is called a {\it $wcs^{*}$-network} for $X$ if, given a sequence $\{x_{n}\}_{n}$ converging to a point $x$ in $X$ and a neighborhood $U$ of $x$ in $X$, there exists a subsequence $\{x_{n_{i}}\}_{i}$ of the sequence $\{x_{n}\}_{n}$ such that $\{x_{n_{i}}:i\in \mathbb{N}\}\subseteq P\subseteq U$ for some $P\in \mathcal{P}$.

\end{definition}

\begin{definition}\cite{ME1} Let $X$ be a topological space.

1. $X$ is called {\it cosmic} if $X$ is a regular space with a countable network.

2. $X$ is called an $\aleph_{0}$-space if it is a regular space with a countable $k$-network.
\end{definition}

It was claimed in \cite{linbook} that every base is a $k$-network and a $cs$-network for a topological space, and every $k$-network or every $cs$-network is a $wcs^{*}$-network for a topological space, but the converse does not hold. Moreover, a space $X$ has a countable $cs$-network if and only if $X$ has a countable $k$-network if and only if $X$ has a countable $wcs^{*}$-network, see \cite{LS}. Therefore, it is natural that a topological space is an $\aleph_{0}$-space if and only if it is a regular space with a countable $cs$-network. Moreover, every $\aleph_{0}$-space is a cosmic space and every cosmic space is a paracompact, separable space.

The following lemmas are necessary.

\begin{lemma}\label{t00002}\cite{BLX}
Suppose that $G$ is a topological gyrogroup and $H$ is a closed and separable $L$-subgyrogroup of $G$. If $Y$ is a separable subset of $G/H$, $\pi ^{-1}(Y)$ is also separable in $G$.
\end{lemma}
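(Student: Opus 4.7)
The plan is to prove the separability of $\pi^{-1}(Y)$ by producing an explicit countable dense subset built from countable dense subsets of $Y$ and $H$, using that $\pi$ is an open continuous map and that fibers are left cosets $x\oplus H$.

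First I would fix a countable dense subset $D=\{y_n:n\in\mathbb{N}\}$ of $Y$ and a countable dense subset $S$ of $H$ (which exists since $H$ is separable). For each $n\in\mathbb{N}$ I would pick a point $x_n\in\pi^{-1}(y_n)$. Then I would form the countable set
\[
E=\{x_n\oplus s : n\in\mathbb{N},\ s\in S\},
\]
and claim that $E$ is dense in $\pi^{-1}(Y)$.

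To verify density, I would take an arbitrary $z\in\pi^{-1}(Y)$ and an arbitrary open neighborhood $U$ of $z$ in $G$. By Lemma~\ref{t00000}, the natural map $\pi$ is open, so $\pi(U)$ is an open neighborhood of $\pi(z)\in Y$. Since $D$ is dense in $Y$, there exists $n$ with $y_n\in\pi(U)$, which means there is some $u\in U$ with $\pi(u)=\pi(x_n)$. Because fibers of $\pi$ are left cosets and $H$ is an $L$-subgyrogroup, we have $u\in x_n\oplus H$, so $u=x_n\oplus h$ for some $h\in H$. Now the left gyrotranslation $L_{x_n}:H\to G$, $h\mapsto x_n\oplus h$, is continuous (being a restriction of the jointly continuous operation), so $L_{x_n}^{-1}(U)$ is an open neighborhood of $h$ in $H$. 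By density of $S$ in $H$, I can pick $s\in S\cap L_{x_n}^{-1}(U)$, giving $x_n\oplus s\in U\cap E$. Hence every neighborhood of $z$ meets $E$, so $E$ is dense in $\pi^{-1}(Y)$.

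The argument is essentially routine once one has the openness of $\pi$ and the coset description of the fibers, both of which are already established in the excerpt. The only mildly delicate point is to be careful that $u=x_n\oplus h$ really does hold when $\pi(u)=\pi(x_n)$; this uses the standard fact, recalled just before Lemma~\ref{t00000} in the excerpt, that $\pi^{-1}(\pi(x_n))=x_n\oplus H$. No appeal to the strong/admissible structure, to gyrator-invariance, or to inner neutrality is needed, so the lemma holds in the generality stated.
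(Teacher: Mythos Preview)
Your argument is correct. The paper does not supply its own proof of this lemma; it simply quotes the result from \cite{BLX}, so there is nothing in the present paper to compare against. Your construction---choose countable dense sets $D\subseteq Y$ and $S\subseteq H$, lift each $y_n\in D$ to some $x_n\in\pi^{-1}(y_n)$, and take $E=\{x_n\oplus s:n\in\mathbb{N},\,s\in S\}$---is exactly the standard proof one would expect, and it goes through for the reasons you state: openness of $\pi$ (Lemma~\ref{t00000}), the fiber description $\pi^{-1}(\pi(x_n))=x_n\oplus H$, and continuity of the left gyrotranslation $L_{x_n}$.

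Two cosmetic remarks. First, when you say ``$\pi(U)$ is an open neighborhood of $\pi(z)\in Y$'' and then invoke density of $D$ in $Y$, what you are really using is that $\pi(U)\cap Y$ is a nonempty relatively open subset of $Y$; this is of course what you mean, but it is worth making explicit. Second, it is worth noting (as you implicitly use) that $E\subseteq\pi^{-1}(Y)$, since $\pi(x_n\oplus s)=\pi(x_n)=y_n\in Y$ for $s\in H$; this is why finding a point of $E$ in $U$ automatically gives a point of $E$ in $U\cap\pi^{-1}(Y)$. Neither of these affects the validity of the proof.
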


\begin{lemma}\cite{BL2}\label{t00003}
Every locally paracompact strongly topological gyrogroup is paracompact.
\end{lemma}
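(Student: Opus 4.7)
The plan is to adapt the classical argument that every locally paracompact topological group is paracompact to the gyrogroup setting. Concretely, I will construct an open paracompact strong subgyrogroup $H$ of $G$, and then, using the left-coset partition of $G$, express $G$ as a topological sum (clopen partition) of paracompact subspaces, which is itself paracompact.

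First I would use local paracompactness together with the regularity of strongly topological gyrogroups (cf.\ Theorem~\ref{regular}) to select a symmetric $V\in\mathscr U$ with $\overline V$ paracompact; by the joint continuity of $\oplus$ one may further arrange that $V\oplus(V\oplus V)$ lies inside a prescribed $V_{0}\in\mathscr U$ with $\overline{V_{0}}$ still paracompact. Next I would set $V^{(1)}=V$ and $V^{(n+1)}=V\oplus V^{(n)}$, and let $H=\bigcup_{n\in\mathbb N}V^{(n)}$. Because every element of $\mathscr U$ is gyration-invariant, axiom (G3) together with the identities of Lemma~\ref{a} propagates gyration-invariance to every $V^{(n)}$ and shows that $H$ is closed under $\oplus$ and under $\ominus$; hence $H$ is a strong subgyrogroup of $G$, and it is open (and so clopen) because $V\subseteq H$.

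The heart of the proof is then to show that $H$ is paracompact. I would prove inductively that each $\overline{V^{(n)}}$ is paracompact: the base case is immediate, and the inductive step presents $\overline{V^{(n+1)}}$ as the image of $\overline V\times\overline{V^{(n)}}$ under $\oplus$, using gyration-invariance to keep track of fibres and local structure. Once each $\overline{V^{(n)}}$ is paracompact, Michael's theorem on countable closed unions of paracompact subspaces delivers paracompactness of $H$. This inductive step is the main obstacle: continuous images of paracompact spaces need not be paracompact, so one cannot simply push paracompactness through $\oplus$. The workaround I envisage is to realise $V^{(n+1)}$ as the union of the translates $V\oplus x$, $x\in V^{(n)}$, and extract from the paracompactness of $\overline{V^{(n)}}$ a locally finite refinement that, by the gyration-invariance of $V$, pulls back to a locally finite open refinement of any given open cover of $V^{(n+1)}$.

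Finally, because every strong subgyrogroup is an $L$-subgyrogroup, \cite[Theorem~20]{ST} yields that $\{a\oplus H:a\in G\}$ is a partition of $G$. By Lemma~\ref{homogeneous} the left gyrotranslation $y\mapsto a\oplus y$ is a homeomorphism, so each coset is homeomorphic to $H$ and is open since $H$ is open. The cosets thus form a clopen partition of $G$ into paracompact subspaces, whence $G$ is the topological sum of paracompact spaces and is paracompact.
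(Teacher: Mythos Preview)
This lemma is not proved in the present paper; it is quoted from \cite{BL2} and used as a black box (for instance in Theorem~\ref{3dl3}). There is therefore no proof here to compare your attempt against.

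On the merits of your plan: the skeleton is the right one. Building the open strong subgyrogroup $H=\bigcup_{n}V^{(n)}$ from a symmetric $V\in\mathscr U$ with $\overline V$ paracompact, checking gyration-invariance of each $V^{(n)}$ so that $H$ is closed under $\oplus$ and $\ominus$, and then partitioning $G$ into clopen left cosets of $H$, all go through essentially as in the group case. The difficulty you yourself isolate---showing $H$ is paracompact---is the genuine crux, and your sketch does not resolve it. Concretely:
\begin{itemize}
\item Presenting $\overline{V^{(n+1)}}$ as the image of $\overline{V}\times\overline{V^{(n)}}$ under $\oplus$ does not help: the product of two paracompact spaces need not be paracompact (the Sorgenfrey plane), and even when it is, $\oplus$ is not a perfect map here, so paracompactness would not descend.
\item Your ``workaround'' indexes the cover $\{V\oplus x:x\in V^{(n)}\}$ of $V^{(n+1)}$ by the paracompact set $\overline{V^{(n)}}$, but paracompactness of the \emph{index set} gives no control over local finiteness of the translated family on $V^{(n+1)}$; there is no map along which a refinement can ``pull back'' in the way you suggest.
\item There is no Michael theorem saying that a countable increasing union of closed paracompact subspaces of a regular space is paracompact. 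The usable statement is for \emph{locally finite} closed covers; to manufacture such a cover from $\{\overline{V^{(n)}}\}$ via the shells $\overline{V^{(n)}}\setminus V^{(n-1)}$ you would again need each $\overline{V^{(n)}}$ to be paracompact, so the argument is circular.
\end{itemize}
In short, the outline is correct but the one nontrivial step is missing; for a complete argument you should consult \cite{BL2} directly.
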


\begin{lemma}\label{t00004}\cite{BD}
Every star-countable family $\mathcal{P}$ of subsets of a topological space $X$ can be expressed as $\mathcal{P}=\bigcup \{\mathcal{P} _{\alpha}:\alpha \in \Lambda\}$, where each subfamily $\mathcal{P}_{\alpha}$ is countable and $(\bigcup \mathcal{P} _{\alpha})\cap (\bigcup \mathcal{P} _{\beta})=\emptyset$ whenever $\alpha \not =\beta$.
\end{lemma}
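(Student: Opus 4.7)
The plan is to obtain the decomposition via a natural equivalence relation on $\mathcal{P}$ generated by the intersection pattern, and then use star-countability to show each equivalence class is countable.

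First, I would define a relation $\sim$ on $\mathcal{P}$ by declaring $P \sim P'$ whenever there exists a finite ``chain'' $P = P_0, P_1, \ldots, P_n = P'$ of members of $\mathcal{P}$ with $P_i \cap P_{i+1} \neq \emptyset$ for $0 \le i \le n-1$. It is immediate from the definition that $\sim$ is reflexive, symmetric, and transitive (concatenating chains). Let $\{\mathcal{P}_\alpha : \alpha \in \Lambda\}$ be the partition of $\mathcal{P}$ into equivalence classes under $\sim$.

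The disjointness condition is the easy half. If $\alpha \neq \beta$ and $x \in (\bigcup \mathcal{P}_\alpha) \cap (\bigcup \mathcal{P}_\beta)$, then $x \in P \cap P'$ for some $P \in \mathcal{P}_\alpha$ and $P' \in \mathcal{P}_\beta$; but then $P \sim P'$ via the one-step chain $P, P'$, forcing $\alpha = \beta$, a contradiction. Hence $(\bigcup \mathcal{P}_\alpha) \cap (\bigcup \mathcal{P}_\beta) = \emptyset$ whenever $\alpha \neq \beta$.

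The main step is to show each $\mathcal{P}_\alpha$ is countable. Fix $P \in \mathcal{P}_\alpha$ and, for each $n \in \mathbb{N}$, let $\mathcal{C}_n$ denote the set of all $Q \in \mathcal{P}$ that can be joined to $P$ by a chain of length at most $n$. Then $\mathcal{P}_\alpha = \bigcup_{n \in \mathbb{N}} \mathcal{C}_n$, so it suffices to prove each $\mathcal{C}_n$ is countable. I would do this by induction: $\mathcal{C}_1 = \{P\}$ is countable, and assuming $\mathcal{C}_n$ is countable, note that every $Q \in \mathcal{C}_{n+1}$ meets some $Q' \in \mathcal{C}_n$; thus $\mathcal{C}_{n+1} \subseteq \bigcup_{Q' \in \mathcal{C}_n} \{R \in \mathcal{P} : R \cap Q' \neq \emptyset\}$, which is a countable union of countable sets by star-countability of $\mathcal{P}$, hence countable. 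This completes the induction, so $\mathcal{P}_\alpha$ is a countable union of countable sets and therefore countable.

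The main (and really only) subtlety is setting up the induction cleanly and invoking star-countability at the correct step; once the chain-equivalence relation is in place, both the countability of classes and their pairwise disjoint unions follow directly.
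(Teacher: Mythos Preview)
Your argument is correct and is in fact the standard proof of this well-known folklore lemma: partition $\mathcal{P}$ into the connected components of the ``intersection graph'' and use star-countability together with an induction on chain length to see that each component is countable. One tiny cosmetic point: with the usual convention that the length of a chain $P_0,\ldots,P_n$ is $n$, your $\mathcal{C}_1$ would already be the star of $P$ rather than $\{P\}$; either indexing works, just be consistent.

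As for comparison with the paper: there is nothing to compare. The paper does not prove this lemma at all; it merely quotes it from Burke's chapter in the \emph{Handbook of Set-Theoretic Topology} (reference \cite{BD}) and uses it as a black box. Your write-up supplies exactly the classical proof one finds in that source.
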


\begin{theorem}\label{3dl3}
Let $G$ be a strongly topological gyrogroup and $H$ a closed first-countable and separable strong subgyrogroup of $G$. If the quotient space $G/H$ is a local $\aleph_{0}$-space, then $G$ is a topological sum of $\aleph_{0}$-subspace.
\end{theorem}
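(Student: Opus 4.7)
The strategy is to first upgrade the local $\aleph_{0}$-property from $G/H$ to $G$, and then assemble the resulting local structure into a topological sum by means of paracompactness.

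Step 1 (Setup). By hypothesis, choose an open neighbourhood $V$ of $\pi(0)$ in $G/H$ which is an $\aleph_{0}$-subspace. Every $\aleph_{0}$-space is cosmic and hence separable, so $V$ is separable; Lemma~\ref{t00002} then yields that $U:=\pi^{-1}(V)$ is a separable open neighbourhood of $0$ in $G$. Since $H$ is a first-countable topological gyrogroup, the Cai--Lin--He rectifiability result mentioned in the introduction forces $H$ to be metrizable, and combined with separability this gives $H$ a countable base. Using the strong subgyrogroup hypothesis $\mathrm{gyr}[x,y](H)=H$, pick a countable symmetric base $\{B_{n}:n\in\mathbb N\}\subseteq\mathscr U$ at $0$ in $G$ such that $\{B_{n}\cap H:n\in\mathbb N\}$ is a base of $H$ at $0$.

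Step 2 (Local $\aleph_{0}$-property of $G$). I plan to show that $U$ carries a countable $cs$-network, so that $U$ is itself an $\aleph_{0}$-subspace of $G$. Let $\mathcal N=\{N_{k}:k\in\mathbb N\}$ be a countable $cs$-network for $V$, and let $D=\{d_{m}:m\in\mathbb N\}$ be a countable dense subset of $U$. The candidate network is the countable family
\[
\mathcal F=\bigl\{(d_{m}\oplus B_{n})\cap \pi^{-1}(N_{k}):m,n,k\in\mathbb N\bigr\}.
\]
Given a sequence $\{x_{j}\}\to x$ in $U$ and a neighbourhood $O$ of $x$, I would first project by $\pi$ and use $\mathcal N$ to capture the tail of $\{\pi(x_{j})\}$ inside some $N_{k}\subseteq\pi(O)$, and then, in the ``fiber direction'', use density of $D$ together with the base $\{B_{n}\cap H\}$ of $H$ at $0$ to trap the tail of $\{x_{j}\}$ inside some $d_{m}\oplus B_{n}\subseteq O$. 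Here the identity $\mathrm{gyr}[x,y](H)=H$ and the associator formulas from Lemma~\ref{a} are used to re-associate translations of $\pi^{-1}(N_{k})$ and of $B_{n}$ in a controlled way. This step is the main obstacle: despite the non-associative gyration structure, one must verify rigorously that $\mathcal F$ really is a $cs$-network, which requires the strong subgyrogroup hypothesis to stabilize the fiber structure of $\pi$ under translation.

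Step 3 (Topological sum). With Step 2 in hand, $G$ is locally $\aleph_{0}$; in particular it is locally separable and locally paracompact, and by Lemma~\ref{t00003} it is globally paracompact. A standard fact about paracompact locally separable Hausdorff spaces (a locally finite refinement of a cover by separable open sets, together with the equivalence relation that declares two points equivalent when they are joined by a finite chain of such sets) yields a decomposition of $G$ into a disjoint family of clopen separable subspaces; Lemma~\ref{t00004} may be invoked here to make the decomposition explicit. Each clopen summand is separable and paracompact, hence Lindelöf, and remains locally $\aleph_{0}$, so it is a countable union of open $\aleph_{0}$-subspaces. Since the class of $\aleph_{0}$-spaces is closed under countable unions, each summand is itself an $\aleph_{0}$-space, and $G$ is the desired topological sum.
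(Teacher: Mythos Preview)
Your plan matches the paper's proof essentially step for step: the same candidate family $\mathcal F=\{(d_{m}\oplus B_{n})\cap\pi^{-1}(N_{k})\}$ for the local $cs$-network, the same appeals to Lemmas~\ref{t00002}, \ref{t00003} and \ref{t00004}, and the same star-countable decomposition of a locally finite $\aleph_{0}$-cover into countable clopen blocks. The one technical ingredient you should make explicit when you execute Step~2 is to choose the $B_{n}\in\mathscr U$ so that $B_{n+1}\oplus(B_{n+1}\oplus B_{n+1})\subseteq B_{n}$; this nesting (not just that $\{B_{n}\cap H\}$ is a base) is precisely what the paper uses to force the $H$-component of $(\ominus x)\oplus z$ into $B_{n}\cap H$ and thereby verify $\mathcal F$ is a $cs$-network.
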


\begin{proof}
Let $G$ be a strongly topological gyrogroup with a symmetric neighborhood base $\mathscr U$ at $0$. Since the quotient space $G/H$ is a local $\aleph_{0}$-space, we can find an open neighborhood $Y$ of $H$ in $G/H$ such that $Y$ has a countable $cs$-network. Put $X=\pi ^{-1}(Y)$. By Lemma \ref{t00000}, the natural homomorphism $\pi$ from $G$ onto $G/H$ is an open and continuous mapping, so $X$ is an open neighborhood of the identity element $0$ in $G$. Since $Y$ is an $\aleph_{0}$-space and each $\aleph_{0}$-space is separable, it follows from Lemma \ref{t00002} that $X$ is separable. Therefore, there is countable subset $B=\{b_{m}:m\in \mathbb{N}\}$ of $X$ such that $\overline{B}=X$.

Since $H$ is first-countable, there exists a countable family $\{V_{n}:n\in \mathbb{N}\}\subseteq \mathscr U$ of open symmetric neighborhoods of $0$ in $G$ such that $V_{n+1}\oplus (V_{n+1}\oplus V_{n+1})\subseteq V_{n}\subseteq X$ for each $n\in \mathbb{N}$ and the family $\{V_{n}\cap H:n\in \mathbb{N}\}$ is a local base at $0$ for $H$. Since $Y$ is an $\aleph_{0}$-space, there is a countable $cs$-network $\{P_{k}:k\in \mathbb{N}\}$ for $Y$.

{\bf Claim 1.} $X$ is an $\aleph_{0}$-space.

Put $\mathcal{F}=\{\pi ^{-1}(P_{k})\cap (b_{m}\oplus V_{n}):k,m,n\in \mathbb{N}\}$. Then $\mathcal{F}$ is a countable family of subsets of $X$. Suppose that $\{x_{i}\}_{i}$ is a sequence converging to a point $x$ in $X$ and $U$ be a neighborhood of $x$ in $X$. Then $U$ is also a neighborhood of $x$ in $G$. Let $V$ be an open neighborhood of $0$ in $G$ such that $x\oplus (V\oplus V)\subseteq U$. Since $\{V_{n}\cap H:n\in \mathbb{N}\}$ is a local base at $0$ for $H$, there is $n\in \mathbb{N}$ such that $V_{n}\cap H\subseteq V\cap H$. Moreover, $(x\oplus V_{n+1})\cap X$ is a non-empty open subset of $X$ and $\overline{B}=X$, whence $B\cap (x\oplus V_{n+1})\not =\emptyset$. Therefore, there exists $b_{m}\in B$ such that $b_{m}\in x\oplus V_{n+1}$. Furthermore, $(x\oplus V_{n+1})\cap (x\oplus V)$ is an open neighborhood of $x$ and $\pi :G\rightarrow G/H$ is an open mapping, so $\pi ((x\oplus V_{n+1})\cap (x\oplus V))$ is an open neighborhood of $\pi (x)$ in the space $Y$ and the sequence $\{\pi (x_{i})\}_{i}$ converges to $\pi (x)$ in $Y$. It is obtained that $$\{\pi (x)\}\cup \{\pi (x_{i}):i\geq i_{0}\}\subseteq P_{k}\subseteq \pi ((x\oplus V_{n+1})\cap (x\oplus V)) \mbox{ for some } i_{0},k\in \mathbb{N}.$$ By the left cancellation law of Lemma \ref{a}, it is easy to verify that $(x\oplus V_{n+1})\cap (x\oplus V)=x\oplus (V_{n+1}\cap V)$.
Therefore, for an arbitrary $z\in \pi ^{-1}(P_{k})\cap (b_{m}\oplus V_{n+1})$, $\pi (z)\in P_{k}\subseteq \pi (x\oplus (V_{n+1}\cap V))$. Since $z\in (x\oplus (V_{n+1}\cap V))\oplus H$, and $H$ is a strong subgyrogroup, then
$$z\in (x\oplus (V_{n+1}\cap V))\oplus H=\bigcup_{t\in V_{n+1}\cap V}\{(x\oplus t)\oplus H\}=\bigcup_{t\in V_{n+1}\cap V}\{x\oplus (t\oplus \mbox{gyr}[t,x](H))\}=x\oplus ((V_{n+1}\cap V)\oplus H).$$

Therefore, $\ominus x\oplus z\in (V_{n+1}\cap V)\oplus H$. Moreover, since $z\in b_{m}\oplus V_{n+1}$ and $b_{m}\in x\oplus V_{n+1}$, it follows that
\begin{eqnarray}
z&\in&(x\oplus V_{n+1})\oplus V_{n+1}\nonumber\\
&=&\bigcup_{u,v\in V_{n+1}}\{(x\oplus u)\oplus v\}\nonumber\\
&=&\bigcup_{u,v\in V_{n+1}}\{x\oplus (u\oplus \mbox{gyr}[u,x](v))\}\nonumber\\
&=&x\oplus (V_{n+1}\oplus V_{n+1}).\nonumber
\end{eqnarray}
So, $(\ominus x)\oplus z\in V_{n+1}\oplus V_{n+1}$. Hence, $(\ominus x)\oplus z\in ((V_{n+1}\cap V)\oplus H)\cap (V_{n+1}\oplus V_{n+1})$. There exist $a\in (V_{n+1}\cap V),~h\in H$ and $u_{3},v_{3}\in V_{n+1}$ such that $(\ominus x)\oplus z=a\oplus h=u_{3}\oplus v_{3}$, whence $h=(\ominus a)\oplus (u_{3}\oplus v_{3})\in V_{n+1}\oplus (V_{n+1}\oplus V_{n+1})\subseteq V_{n}$. Therefore, $(\ominus x)\oplus z\in (V_{n+1}\cap V)\oplus (V_{n}\cap H)$, and consequently, $z\in x\oplus ((V_{n+1}\cap V)\oplus (V_{n}\cap H))\subseteq x\oplus (V\oplus V)\subseteq U$. Thus, we obtain that $\pi ^{-1}(P_{k})\cap (b_{m}\oplus V_{n+1})\subseteq U$.

Since $b_{m}\in x\oplus V_{n+1}$, there is $u\in V_{n+1}$ such that $b_{m}=x\oplus u$, whence
\begin{eqnarray}
x&=&(x\oplus u)\oplus \mbox{gyr}[x,u](\ominus u)\nonumber\\
&=&b_{m}\oplus \mbox{gyr}[x,u](\ominus u)\nonumber\\
&\in&b_{m}\oplus \mbox{gyr}[x,u](V_{n+1})\nonumber\\
&=&b_{m}\oplus V_{n+1}.\nonumber
\end{eqnarray}
Therefore, there exists $i_{1}\geq i_{0}$ such that $x_{i}\in b_{m}\oplus V_{n+1}$ when $i\geq i_{1}$, whence $\{x\}\cup \{x_{i}:i\geq i_{1}\}\subseteq \pi ^{-1}(P_{k})\cap (b_{m}\oplus V_{n+1})$. Thus $\mathcal{F}$ is a countable $cs$-network for $X$, and hence $X$ is an $\aleph_{0}$-space.

Since $G$ is homogeneous, it is clear that $G$ is a local $\aleph_{0}$-space. Therefore, $G$ is a locally paracompact space. Furthermore, every locally paracompact strongly topological gyrogroup is paracompact by Lemma \ref{t00003}, so $G$ is paracompact. Let $\mathcal{A}$ be an open cover of $G$ by $\aleph_{0}$-subspace. Because the property of being an $\aleph_{0}$-space is hereditary, we can assume that $\mathcal{A}$ is locally finite in $G$ by the paracompactness of $G$. Moreover, as every point-countable family of open subsets in a separable space is countable, the family $\mathcal{A}$ is star-countable. Then $\mathcal{A}=\bigcup \{\mathcal{B}_{\alpha}:\alpha \in \Lambda\}$ by Lemma \ref{t00004}, where each subfamily $\mathcal{B}_{\alpha}$ is countable and $(\bigcup \mathcal{B}_{\alpha})\cap (\bigcup \mathcal{B}_{\beta})=\emptyset$ whenever $\alpha \not =\beta$. Set $X_{\alpha}=\bigcup \mathcal{B}_{\alpha}$ for each $\alpha \in \Lambda$. Then $G=\bigoplus_{\alpha \in \Lambda}X_{\alpha}$.

{\bf Claim 2.} $X_{\alpha}$ is an $\aleph_{0}$-subspace for each $\alpha \in \Lambda$.

Put $\mathcal{B}_{\alpha}=\{B_{\alpha ,n}:n\in \mathbb{N}\}$, where each $B_{\alpha ,n}$ is an open $\aleph_{0}$-subspace of $G$, and put $\mathcal{P}_{\alpha}=\bigcup _{n\in \mathbb{N}}\mathcal{P}_{\alpha ,n}$, where $\mathcal{P}_{\alpha ,n}$ is a countable $cs$-network for the $\aleph_{0}$-space $B_{\alpha ,n}$ for each $n\in \mathbb{N}$. Then $\mathcal{P}_{\alpha}$ is a countable $cs$-network for $X_{\alpha}$. Thus, $X_{\alpha}$ is an $\aleph_{0}$-space.

In conclusion, $G$ is a topological sum of $\aleph_{0}$-subspaces.
\end{proof}

\begin{corollary}
Let $G$ be a strongly topological gyrogroup and $H$ a closed first-countable and separable strong subgyrogroup of $G$. If the quotient space $G/H$ is an $\aleph_{0}$-space, $G$ is also an $\aleph_{0}$-space.
\end{corollary}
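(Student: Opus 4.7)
The plan is to derive the corollary directly from Theorem \ref{3dl3} together with the separability that the hypothesis forces upon $G$. Since any $\aleph_{0}$-space has a countable $k$-network and is, in particular, locally an $\aleph_{0}$-space, the hypothesis that $G/H$ is an $\aleph_{0}$-space puts us in the setting of Theorem \ref{3dl3}. Invoking that theorem we obtain a decomposition
\[
G=\bigoplus_{\alpha\in\Lambda}X_{\alpha},
\]
where each $X_{\alpha}$ is an open $\aleph_{0}$-subspace of $G$. What remains is to show that the index set $\Lambda$ is countable and that a countable topological sum of $\aleph_{0}$-spaces is again an $\aleph_{0}$-space.

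First I would show that $G$ is separable. Every $\aleph_{0}$-space has a countable network and therefore is separable, so $G/H$ is separable. Applying Lemma \ref{t00002} to the separable subset $Y=G/H$ gives that $\pi^{-1}(G/H)=G$ is separable. Since the family $\{X_{\alpha}:\alpha\in\Lambda\}$ is a pairwise disjoint family of non-empty open subsets of the separable space $G$, the index set $\Lambda$ must be at most countable.

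Finally I would assemble a countable $cs$-network (equivalently, countable $k$-network) for $G$. For each $\alpha\in\Lambda$ let $\mathcal{P}_{\alpha}$ be a countable $cs$-network for $X_{\alpha}$; set $\mathcal{P}=\bigcup_{\alpha\in\Lambda}\mathcal{P}_{\alpha}$, which is countable because $\Lambda$ is countable. Since the $X_{\alpha}$ are clopen in the topological sum $G$, any convergent sequence in $G$ is eventually contained in exactly one $X_{\alpha}$, and any neighborhood of its limit, intersected with that $X_{\alpha}$, is a neighborhood within $X_{\alpha}$; hence a suitable member of $\mathcal{P}_{\alpha}\subseteq\mathcal{P}$ witnesses the $cs$-network property at that point. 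Regularity of $G$ follows from \cite[Proposition 8]{AW} and standard facts on topological gyrogroups (or directly from Theorem \ref{regular} applied to the trivial subgyrogroup $\{0\}$). The only potential obstacle is the verification that the $\aleph_{0}$-property passes through a countable topological sum, but this is routine since convergent sequences, compact sets, and open neighborhoods in the sum all live inside finitely many summands. This completes the plan.
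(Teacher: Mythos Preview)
Your argument is correct, but it takes a longer route than the paper intends. In the proof of Theorem~\ref{3dl3}, Claim~1 establishes that the open set $X=\pi^{-1}(Y)$ is an $\aleph_{0}$-space whenever $Y$ is an open $\aleph_{0}$-subspace of $G/H$; when $G/H$ itself is an $\aleph_{0}$-space one may simply take $Y=G/H$, so $X=G$ and Claim~1 already yields the corollary outright, with no decomposition needed. Your approach instead invokes the full statement of Theorem~\ref{3dl3}, obtains the topological sum $G=\bigoplus_{\alpha}X_{\alpha}$, and then uses Lemma~\ref{t00002} (separability of $G$) to force $\Lambda$ to be countable. This is perfectly valid and has the virtue of being a black-box application of the theorem as stated, but it does extra work: the separability step and the countable-sum assembly are unnecessary once one notices that the construction in Claim~1 globalizes immediately under the stronger hypothesis.
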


By the similar proof of Theorem \ref{3dl3}, the following result is obvious.

\begin{theorem}
Let $G$ be a strongly topological gyrogroup and $H$ a closed first-countable and separable strong subgyrogroup of $G$. If the quotient space $G/H$ is a locally cosmic space, then $G$ is a topological sum of cosmic subspaces.
\end{theorem}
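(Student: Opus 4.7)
The plan is to mimic the proof of Theorem \ref{3dl3} line for line, replacing the role of a countable $cs$-network by that of a countable network, and then assembling the pieces using homogeneity, paracompactness, and Lemma \ref{t00004}. I begin by fixing an open neighborhood $Y$ of $\pi(0)$ in $G/H$ that is cosmic, setting $X=\pi^{-1}(Y)$, and recording that $X$ is an open neighborhood of $0$ in $G$ (by Lemma \ref{t00000}) which is separable (by Lemma \ref{t00002}, since every cosmic space is separable). Let $B=\{b_m:m\in\mathbb{N}\}$ be a countable dense subset of $X$, let $\{P_k:k\in\mathbb{N}\}$ be a countable network for $Y$, and, using the first-countability of $H$, choose a decreasing sequence $\{V_n:n\in\mathbb{N}\}\subseteq\mathscr U$ of symmetric open neighborhoods of $0$ with $V_{n+1}\oplus(V_{n+1}\oplus V_{n+1})\subseteq V_n\subseteq X$ such that $\{V_n\cap H:n\in\mathbb{N}\}$ is a local base at $0$ in $H$.

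The first and main step is to show that the countable family
\[
\mathcal{F}=\{\pi^{-1}(P_k)\cap(b_m\oplus V_n):k,m,n\in\mathbb{N}\}
\]
is a network for $X$. Given $x\in X$ and an open neighborhood $U$ of $x$ in $X$, pick an open neighborhood $V$ of $0$ with $x\oplus(V\oplus V)\subseteq U$, then $n\in\mathbb{N}$ with $V_n\cap H\subseteq V\cap H$, and, by density of $B$, some $b_m\in B\cap(x\oplus V_{n+1})$. Since $\pi$ is open (Lemma \ref{t00000}) and $\{P_k\}$ is a network for $Y$, we may choose $k$ with $\pi(x)\in P_k\subseteq\pi(x\oplus(V_{n+1}\cap V))$, so that $x\in\pi^{-1}(P_k)\cap(b_m\oplus V_{n+1})$. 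Reversing the strong-subgyrogroup computation from the proof of Theorem \ref{3dl3}, any $z$ in this intersection satisfies $\ominus x\oplus z\in(V_{n+1}\cap V)\oplus H$ and $\ominus x\oplus z\in V_{n+1}\oplus V_{n+1}$; writing $\ominus x\oplus z=a\oplus h=u_3\oplus v_3$ with $a\in V_{n+1}\cap V$, $h\in H$ and $u_3,v_3\in V_{n+1}$, the left cancellation law yields $h\in V_{n+1}\oplus(V_{n+1}\oplus V_{n+1})\subseteq V_n$, hence $h\in V_n\cap H\subseteq V\cap H$ and $z\in x\oplus(V\oplus V)\subseteq U$. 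Thus $x\in\pi^{-1}(P_k)\cap(b_m\oplus V_{n+1})\subseteq U$, so $\mathcal{F}$ is a countable network and $X$ is cosmic.

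Once $X$ is cosmic, the homogeneity of $G$ (via the left gyrotranslations, cf.\ Lemma \ref{homogeneous} applied at the level of $G$) gives that $G$ is locally cosmic. Since every cosmic space is paracompact (indeed Lindel\"of), $G$ is locally paracompact, so by Lemma \ref{t00003} $G$ is paracompact. Let $\mathcal{A}$ be an open cover of $G$ by cosmic subspaces; cosmic-ness is hereditary, so we may refine $\mathcal{A}$ to a locally finite open cover by cosmic subspaces. Because every cosmic subspace is separable, every point-countable (in particular locally finite) family of open cosmic sets is star-countable, so $\mathcal{A}$ is star-countable.

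Finally I apply Lemma \ref{t00004} to decompose $\mathcal{A}=\bigcup\{\mathcal{B}_\alpha:\alpha\in\Lambda\}$ into countable subfamilies whose unions $X_\alpha=\bigcup\mathcal{B}_\alpha$ are pairwise disjoint (and clopen, by local finiteness). Each $X_\alpha$ is a countable union of open cosmic subspaces; the union of countably many countable networks is a countable network, so each $X_\alpha$ is cosmic, and $G=\bigoplus_{\alpha\in\Lambda}X_\alpha$ as required. The main obstacle is the careful checking in the first step that the candidate family $\mathcal{F}$ actually captures neighborhoods in $X$ despite the non-associativity of $\oplus$; this is precisely where the strong-subgyrogroup hypothesis together with the choice $V_{n+1}\oplus(V_{n+1}\oplus V_{n+1})\subseteq V_n$ is essential, and where I expect to have to reproduce the gyration-commutation computation from Theorem \ref{3dl3} verbatim.
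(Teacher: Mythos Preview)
Your proposal is correct and follows exactly the route the paper intends: the paper itself says only ``By the similar proof of Theorem \ref{3dl3}, the following result is obvious,'' and your argument is precisely that similar proof, with the countable $cs$-network replaced by a countable network throughout. The key computation showing $\pi^{-1}(P_k)\cap(b_m\oplus V_{n+1})\subseteq U$, the passage from local cosmic to paracompact via Lemma~\ref{t00003}, and the star-countable decomposition via Lemma~\ref{t00004} all carry over verbatim, so there is nothing to add.
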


\begin{corollary}
Let $G$ be a strongly topological gyrogroup and $H$ a closed first-countable and separable strong subgyrogroup of $G$. If the quotient space $G/H$ is a cosmic space, $G$ is also a cosmic space.
\end{corollary}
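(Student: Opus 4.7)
The plan is to deduce this corollary directly from the preceding theorem together with the separability transfer result in Lemma~\ref{t00002}. First, I would observe that every cosmic space is in particular locally cosmic, since cosmic is a hereditary property and $G/H$ itself is an open (cosmic) neighbourhood of each of its points. Applying the previous theorem, we obtain a decomposition $G=\bigoplus_{\alpha\in\Lambda}X_{\alpha}$ as a topological sum of cosmic subspaces $X_{\alpha}$.

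Next, I would use the cosmicity of $G/H$ to control the cardinality of $\Lambda$. Since $G/H$ is cosmic, it is separable, so by Lemma~\ref{t00002} the preimage $\pi^{-1}(G/H)=G$ is separable in $G$. Each $X_{\alpha}$ is a non-empty clopen subset of $G$, and the family $\{X_{\alpha}:\alpha\in\Lambda\}$ is pairwise disjoint; therefore any dense subset of $G$ must meet every $X_{\alpha}$, forcing $\Lambda$ to be at most countable.

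Finally, for each $\alpha\in\Lambda$ pick a countable network $\mathcal{N}_{\alpha}$ for the cosmic space $X_{\alpha}$, and set $\mathcal{N}=\bigcup_{\alpha\in\Lambda}\mathcal{N}_{\alpha}$. Since each $X_{\alpha}$ is open in the topological sum $G$, for any open $U\subseteq G$ and any point $x\in U$ with $x\in X_{\alpha}$, the set $U\cap X_{\alpha}$ is open in $X_{\alpha}$, so there is some $N\in\mathcal{N}_{\alpha}\subseteq\mathcal{N}$ with $x\in N\subseteq U\cap X_{\alpha}\subseteq U$. Thus $\mathcal{N}$ is a network for $G$, and it is countable because $\Lambda$ is countable and each $\mathcal{N}_{\alpha}$ is countable. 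Since $G$ is regular (strongly topological gyrogroups are completely regular under the standing Hausdorff assumption), this yields that $G$ is cosmic.

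The only real subtlety is the step bounding $|\Lambda|$: a topological sum of cosmic subspaces need not be cosmic in general, and without separability of $G$ the argument collapses. The role of Lemma~\ref{t00002}, hence of the hypothesis that $H$ is first-countable and separable, is exactly to transport separability from $G/H$ up to $G$; everything else is bookkeeping on networks and topological sums.
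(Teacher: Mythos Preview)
Your argument is correct and matches the deduction the paper leaves implicit: apply the preceding theorem to write $G=\bigoplus_{\alpha\in\Lambda}X_{\alpha}$ with each $X_{\alpha}$ cosmic, then transfer separability from the cosmic space $G/H$ up to $G$ via Lemma~\ref{t00002} to force $\Lambda$ to be countable. The assembly of the countable networks $\mathcal{N}_{\alpha}$ into a countable network for the regular space $G$ is routine, exactly as you describe.
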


\begin{theorem}\label{3dl2}
Let $G$ be a strongly topological gyrogroup and $H$ a closed first-countable and separable strong subgyrogroup of $G$. If the quotient space $G/H$ has a star-countable $cs$-network, $G$ also has a star-countable $cs$-network.
\end{theorem}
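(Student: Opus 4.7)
The plan is to mirror the proof of Theorem \ref{3dl3} on each piece of a canonical decomposition of the given star-countable $cs$-network. Let $\mathcal{P}$ be a star-countable $cs$-network for $G/H$. By Lemma \ref{t00004}, I decompose $\mathcal{P}=\bigcup_{\alpha\in\Lambda}\mathcal{P}_\alpha$ with each $\mathcal{P}_\alpha$ countable and the unions $Y_\alpha:=\bigcup\mathcal{P}_\alpha$ pairwise disjoint. A short argument using pairwise disjointness shows that $\mathcal{P}_\alpha$ is a countable $cs$-network for the subspace $Y_\alpha$: if $\{y_n\}\subseteq Y_\alpha$ converges to some $y\in Y_\alpha$ inside an open set $V\cap Y_\alpha$ with $V$ open in $G/H$, some $P\in\mathcal{P}$ captures the tail inside $V$, and then $P\cap Y_\alpha\neq\emptyset$ forces $P\in\mathcal{P}_\alpha$. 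By the equivalence recalled at the beginning of this section, having a countable $cs$-network gives a countable $k$-network, so each $Y_\alpha$ is an $\aleph_0$-space and hence separable.

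Consequently, by Lemma \ref{t00002}, $X_\alpha:=\pi^{-1}(Y_\alpha)$ is separable in $G$, and I fix a countable dense subset $B_\alpha=\{b_{m,\alpha}:m\in\mathbb{N}\}$ of $X_\alpha$. Using the first-countability of $H$ exactly as in the proof of Theorem \ref{3dl3}, I choose a decreasing sequence $\{V_n:n\in\mathbb{N}\}\subseteq\mathscr{U}$ of open symmetric neighborhoods of $0$ with $V_{n+1}\oplus(V_{n+1}\oplus V_{n+1})\subseteq V_n$ such that $\{V_n\cap H:n\in\mathbb{N}\}$ is a local base at $0$ for $H$. Set
\[
\mathcal{F}_\alpha:=\{\pi^{-1}(P)\cap(b_{m,\alpha}\oplus V_n):P\in\mathcal{P}_\alpha,\,m,n\in\mathbb{N}\},\qquad \mathcal{F}:=\bigcup_{\alpha\in\Lambda}\mathcal{F}_\alpha.
\]
Each $\mathcal{F}_\alpha$ is countable, and I claim $\mathcal{F}$ is a star-countable $cs$-network for $G$.

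Star-countability of $\mathcal{F}$ is the easy half: if $F\in\mathcal{F}_\alpha$ meets $F'\in\mathcal{F}_\beta$, then $\pi(F)\subseteq Y_\alpha$ and $\pi(F')\subseteq Y_\beta$ share a point, forcing $\alpha=\beta$, so every $F\in\mathcal{F}$ meets only members of the countable family $\mathcal{F}_\alpha$. For the $cs$-network property, given a sequence $\{x_i\}\to x$ in $G$ and an open neighborhood $U$ of $x$, I would choose $V\in\mathscr{U}$ with $x\oplus(V\oplus V)\subseteq U$ and $n\in\mathbb{N}$ satisfying $V_n\cap H\subseteq V\cap H$. Applying the $cs$-network property of $\mathcal{P}$ to the sequence $\{\pi(x_i)\}\to\pi(x)$ and the open neighborhood $\pi((x\oplus V_{n+1})\cap(x\oplus V))$ of $\pi(x)$ yields some $P\in\mathcal{P}_\gamma$ with a tail of $\{\pi(x_i)\}$ inside $P\subseteq\pi((x\oplus V_{n+1})\cap(x\oplus V))$. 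The tail $\{x_i:i\geq i_0\}$ therefore lies in $X_\gamma$, so $x\in\overline{X_\gamma}\subseteq\overline{B_\gamma}$, and density produces $b_{m,\gamma}\in B_\gamma\cap(x\oplus V_{n+1})$. The gyrogroup identities used in Claim 1 of the proof of Theorem \ref{3dl3}, which rely crucially on $H$ being a strong subgyrogroup so that $(x\oplus(V_{n+1}\cap V))\oplus H=x\oplus((V_{n+1}\cap V)\oplus H)$, then show that $F:=\pi^{-1}(P)\cap(b_{m,\gamma}\oplus V_{n+1})\in\mathcal{F}_\gamma$ satisfies $F\subseteq U$ and contains the tail of $\{x_i\}$.

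The main obstacle is matching the index $\gamma$ of the tail-capturing element $P\in\mathcal{P}_\gamma$ to the index of the dense set $B_\gamma$ that should approximate $x$, since $x$ itself need not belong to $X_\gamma$. This is resolved by the observation that the tail of $\{x_i\}$ eventually lies in $X_\gamma$, placing $x$ in $\overline{X_\gamma}\subseteq\overline{B_\gamma}$; density of $B_\gamma$ at boundary points of $X_\gamma$ then furnishes the required approximation $b_{m,\gamma}$. Beyond this bridge, the remaining verification is a faithful transcription of the careful gyrogroup computation in the proof of Theorem \ref{3dl3}.
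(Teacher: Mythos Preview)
Your proof is correct but takes a different organizational route from the paper. The paper does not invoke Lemma~\ref{t00004} at all: it works directly with the individual elements $P_\alpha$ of the star-countable $cs$-network $\mathcal{P}$, observes that $\{P_\alpha\cap P_\beta:\beta\in\Lambda\}$ is a countable $wcs^*$-network for $P_\alpha$ (so each $P_\alpha$ is cosmic, hence separable), pulls back to get a dense countable $B_\alpha\subseteq\pi^{-1}(P_\alpha)$, and sets $\mathcal{F}=\{\pi^{-1}(P_\alpha)\cap(b_{\alpha,m}\oplus V_n)\}$. Star-countability of $\mathcal{F}$ then follows because two members meeting forces $P_\alpha\cap P_\beta\neq\emptyset$, and star-countability of $\mathcal{P}$ handles the rest. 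Your route instead first groups $\mathcal{P}$ into countable blocks with pairwise disjoint unions $Y_\alpha$ via Lemma~\ref{t00004}; this makes star-countability of $\mathcal{F}$ trivial (members from different blocks project into disjoint $Y_\alpha$'s), at the cost of the extra step showing each $Y_\alpha$ is an $\aleph_0$-space. The core gyrogroup computation is identical in both arguments.

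One small simplification you can make: your ``main obstacle'' is illusory. Under the paper's definition, the $cs$-network at a point $\pi(x)$ consists of the members of $\mathcal{P}$ that \emph{contain} $\pi(x)$, so the element $P\in\mathcal{P}_\gamma$ you obtain automatically satisfies $\pi(x)\in P$, hence $x\in\pi^{-1}(P)\subseteq X_\gamma$. You therefore get $b_{m,\gamma}\in B_\gamma\cap(x\oplus V_{n+1})$ directly from density of $B_\gamma$ in $X_\gamma$, with no need to pass through $\overline{X_\gamma}$.
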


\begin{proof}
Let $\mathscr{U}$ be a symmetric neighborhood base at $0$ such that $gyr[x, y](U)=U$ for any $x, y\in G$ and $U\in\mathscr{U}$. Since the subgyrogroup $H$ of $G$ is first-countable at the identity element $0$ of $G$, there exists a countable family $\{V_{n}:n\in \mathbb{N}\}\subseteq \mathscr U$ such that $(V_{n+1}\oplus (V_{n+1}\oplus V_{n+1}))\subseteq V_{n}$ for each $n\in \mathbb{N}$ and the family $\{V_{n}\cap H:n\in \mathbb{N}\}$ is a local base at $0$ for $H$.

Let $\mathcal{P}=\{P_{\alpha}:\alpha \in \Lambda\}$ be a star-countable $cs$-network for the space $G/H$. For each $\alpha \in \Lambda$, the family $\{P_{\alpha}\cap P_{\beta}:\beta \in \Lambda\}$ is a countable $wcs^{*}$-network for $P_{\alpha}$. Therefore, $P_{\alpha}$ is a cosmic space, and $P_{\alpha}$ is separable. Then it follows from Lemma \ref{t00002} that $\pi^{-1}(P_{\alpha})$ is separable. We can find a countable subset $B_{\alpha}=\{b_{\alpha ,m}:m\in \mathbb{N}\}$ of $\pi^{-1}(P_{\alpha})$ such that $\overline{B_{\alpha}}=\pi^{-1}(P_{\alpha})$.

Put $$\mathcal{F}=\{\pi ^{-1}(P_{\alpha})\cap (b_{\alpha ,m}\oplus V_{n}):\alpha \in \Lambda ,~~and~~m,n\in \mathbb{N}\}.$$ Then $\mathcal{F}$ is a star-countable family of $G$.

{\bf Claim.} $\mathcal{F}$ is a $cs$-network for $G$.

Let $\{x_{i}\}_{i}$ be a sequence converging to a point $x$ in $G$ and let $U$ be a neighborhood of $x$ in $G$. Choose an open neighborhood $V$ of $0$ in $G$ such that $(x\oplus (V\oplus V))\subseteq U$. Since $\{V_{n}\cap H:n\in \mathbb{N}\}$ is a local base at $0$ for $H$, there exists $n\in \mathbb{N}$ such that $V_{n}\cap H\subseteq V\cap H$. Since $\pi :G\rightarrow G/H$ is an open and continuous mapping, there are $i_{0}\in \mathbb{N}$ and $\alpha \in \Lambda$ such that $\{\pi (x)\}\cup \{\pi (x_{i}):i\geq i_{0}\}\subseteq P_{\alpha}\subseteq \pi ((x\oplus V_{n+1})\cap (x\oplus V))$. Since $x\in \pi^{-1}(P_{\alpha})$, $(x\oplus V_{n+1})\cap \pi^{-1}(P_{\alpha})$ is non-empty and open in the subspace $\pi^{-1}(P_{\alpha})$. Moreover, since $\overline{B_{\alpha}}=\pi^{-1}(P_{\alpha})$, there exists $m\in \mathbb{N}$ such that $b_{\alpha ,m}\in x\oplus V_{n+1}$.

For an arbitrary $z\in \pi^{-1}(P_{\alpha})\cap (b_{\alpha ,m}\oplus V_{n+1})$, $\pi (z)\in P_{\alpha}\subseteq \pi ((x\oplus V_{n+1})\cap (x\oplus V))=\pi (x\oplus (V_{n+1}\cap V))$. Then, $z\in x\oplus ((V_{n+1}\cap V)\oplus H)$ since $H$ is a strong subgyrogroup. Since $z\in b_{\alpha ,m}\oplus V_{n+1}$ and $b_{\alpha ,m}\in x\oplus V_{n+1}$, we have
\begin{eqnarray}
z&\in&(x\oplus V_{n+1})\oplus V_{n+1}\nonumber\\
&=&\bigcup_{u,v\in V_{n+1}}\{(x\oplus u)\oplus v\}\nonumber\\
&=&\bigcup_{u,v\in V_{n+1}}\{x\oplus (u\oplus \mbox{gyr}[u,x](v))\}\nonumber\\
&=&x\oplus (V_{n+1}\oplus V_{n+1}).\nonumber
\end{eqnarray}
Then, $(\ominus x)\oplus z\in V_{n+1}\oplus V_{n+1}$. Hence, $(\ominus x)\oplus z\in ((V_{n+1}\cap V)\oplus H)\cap (V_{n+1}\oplus V_{n+1})$. Therefore, there exist $a\in (V_{n+1}\cap V),~h\in H$ and $u_{1},u_{2}\in V_{n+1}$ such that $(\ominus x)\oplus z=a\oplus h=u_{1}\oplus u_{2}$, whence $h=(\ominus a)\oplus (u_{1}\oplus u_{2})\in V_{n+1}\oplus (V_{n+1}\oplus V_{n+1})\subseteq V_{n}$. It follows that $(\ominus x)\oplus z\in (V_{n+1}\cap V)\oplus (V_{n}\cap H)$. Thus $z\in x\oplus ((V_{n+1}\cap V)\oplus (V_{n}\cap H))\subseteq x\oplus (V\oplus V)\subseteq U$. Hence, $\pi^{-1}(P_{\alpha})\cap (b_{\alpha ,m}\oplus V_{n+1})\subseteq U$.

Since $b_{\alpha ,m}\in x\oplus V_{n+1}$, there is $u_{3}\in V_{n+1}$ such that $b_{\alpha ,m}=x\oplus u_{3}$. Thus,
\begin{eqnarray}
x&=&(x\oplus u_{3})\oplus \mbox{gyr}[x,u_{3}](\ominus u_{3})\nonumber\\
&=&b_{\alpha ,m}\oplus \mbox{gyr}[x,u_{3}](\ominus u_{3})\nonumber\\
&\in&b_{\alpha ,m}\oplus \mbox{gyr}[x,u_{3}](V_{n+1})\nonumber\\
&=&b_{\alpha ,m}\oplus V_{n+1}.\nonumber
\end{eqnarray}
Therefore, there exists $i_{1}\geq i_{0}$ such that $x_{i}\in b_{\alpha ,m}\oplus V_{n+1}$ whenever $i\geq i_{1}$, whence $\{x\}\cup \{x_{i}:i\geq i_{1}\}\subseteq \pi ^{-1}(P_{\alpha})\cap (b_{\alpha ,m}\oplus V_{n+1})$.

In conclusion, $G$ has a star-countable $cs$-network.
\end{proof}

\begin{theorem}\label{3dl4}
Let $G$ be a strongly topological gyrogroup and $H$ a closed first-countable and separable strong subgyrogroup of $G$. If the quotient space $G/H$ has a star-countable $wcs^{*}$-network, $G$ also has a star-countable $wcs^{*}$-network.
\end{theorem}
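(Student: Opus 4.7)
The plan is to mimic the proof of Theorem~\ref{3dl2} (the star-countable $cs$-network version) essentially verbatim, modifying only one step to accommodate the weaker $wcs^{*}$-network property. First I would fix a symmetric neighborhood base $\mathscr{U}$ at $0$ witnessing that $G$ is strongly topological, and use first-countability of $H$ to extract a sequence $\{V_{n}\}\subseteq\mathscr{U}$ with $V_{n+1}\oplus(V_{n+1}\oplus V_{n+1})\subseteq V_{n}$ for every $n\in\mathbb{N}$ and such that $\{V_{n}\cap H:n\in\mathbb{N}\}$ is a local base at $0$ in $H$. Given a star-countable $wcs^{*}$-network $\mathcal{P}=\{P_{\alpha}:\alpha\in\Lambda\}$ for $G/H$, for each $\alpha$ only countably many $P_{\beta}$ meet $P_{\alpha}$, so $\{P_{\alpha}\cap P_{\beta}:\beta\in\Lambda\}$ is a countable $wcs^{*}$-network for $P_{\alpha}$; hence $P_{\alpha}$ is cosmic and therefore separable. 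Lemma~\ref{t00002} then produces a countable subset $B_{\alpha}=\{b_{\alpha,m}:m\in\mathbb{N}\}$ dense in $\pi^{-1}(P_{\alpha})$.

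Setting $\mathcal{F}=\{\pi^{-1}(P_{\alpha})\cap(b_{\alpha,m}\oplus V_{n}):\alpha\in\Lambda,\; m,n\in\mathbb{N}\}$, the star-countability of $\mathcal{F}$ is transparent: any element indexed by $\alpha$ can meet only elements indexed by those $\beta$ with $P_{\beta}\cap P_{\alpha}\neq\emptyset$, so this property is inherited from $\mathcal{P}$.

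The heart of the argument is to verify that $\mathcal{F}$ is a $wcs^{*}$-network. Given a sequence $x_{i}\to x$ in $G$ and an open neighborhood $U$ of $x$, I would pick $V\in\mathscr{U}$ with $x\oplus(V\oplus V)\subseteq U$ and $n\in\mathbb{N}$ with $V_{n}\cap H\subseteq V\cap H$. Applying the $wcs^{*}$-network property of $\mathcal{P}$ to the convergent sequence $\pi(x_{i})\to\pi(x)$ inside the open neighborhood $\pi((x\oplus V_{n+1})\cap(x\oplus V))$ of $\pi(x)$, one extracts a subsequence $\{x_{i_{j}}\}$ of $\{x_{i}\}$ and an index $\alpha\in\Lambda$ so that $\{\pi(x_{i_{j}}):j\in\mathbb{N}\}\subseteq P_{\alpha}\subseteq\pi((x\oplus V_{n+1})\cap(x\oplus V))$. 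The inclusion computation from the proof of Theorem~\ref{3dl2}, which exploits the strongness of $H$ together with $V_{n+1}\oplus(V_{n+1}\oplus V_{n+1})\subseteq V_{n}$, then gives $\pi^{-1}(P_{\alpha})\cap(b_{\alpha,m}\oplus V_{n+1})\subseteq U$ as soon as $b_{\alpha,m}\in x\oplus V_{n+1}$.

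The only substantive deviation from the $cs$-network case is the selection of $b_{\alpha,m}$: in Theorem~\ref{3dl2} the tail condition $\pi(x)\in P_{\alpha}$ was automatic, so density of $B_{\alpha}$ in $\pi^{-1}(P_{\alpha})$ immediately supplied an $m$ with $b_{\alpha,m}\in x\oplus V_{n+1}$. In the $wcs^{*}$ setting $P_{\alpha}$ need not contain $\pi(x)$, and this is the main obstacle. I would bypass it by noting that since every $x_{i_{j}}$ lies in $\pi^{-1}(P_{\alpha})$ and $x_{i_{j}}\to x$, the point $x$ lies in the closure of $\pi^{-1}(P_{\alpha})$ in $G$, which coincides with the closure of $B_{\alpha}$; hence $B_{\alpha}\cap(x\oplus V_{n+1})\neq\emptyset$ and an appropriate $b_{\alpha,m}$ exists. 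The same gyrogroup-symmetry argument as in Theorem~\ref{3dl2} yields $x\in b_{\alpha,m}\oplus V_{n+1}$, so $x_{i_{j}}\in b_{\alpha,m}\oplus V_{n+1}$ for all sufficiently large $j$. Combined with $x_{i_{j}}\in\pi^{-1}(P_{\alpha})$, the tail $\{x_{i_{j}}\}_{j\geq j_{0}}$ is a subsequence of $\{x_{i}\}$ contained in $\pi^{-1}(P_{\alpha})\cap(b_{\alpha,m}\oplus V_{n+1})\subseteq U$, which is precisely what the definition of a $wcs^{*}$-network demands.
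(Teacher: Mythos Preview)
Your proposal is correct and follows essentially the same approach as the paper. The only difference lies in how $b_{\alpha,m}$ is chosen: the paper picks it near the point $x_{i_{1}}\in\pi^{-1}(P_{\alpha})$ using the smaller neighborhood $V_{n+2}$ (so that $b_{\alpha,m}\in x_{i_{1}}\oplus V_{n+2}\subseteq x\oplus(V_{n+2}\oplus V_{n+2})\subseteq x\oplus V_{n+1}$), whereas you obtain $b_{\alpha,m}\in x\oplus V_{n+1}$ directly from the observation that $x\in\overline{\pi^{-1}(P_{\alpha})}=\overline{B_{\alpha}}$ in $G$.
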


\begin{proof}
Let $\mathscr{U}$ be a symmetric neighborhood base at $0$ such that $gyr[x, y](U)=U$ for any $x, y\in G$ and $U\in\mathscr{U}$. Since the subgyrogroup $H$ of $G$ is first-countable at the identity element $0$ of $G$, there exists a countable family $\{V_{n}:n\in \mathbb{N}\}\subseteq \mathscr U$ in $G$ such that $(V_{n+1}\oplus (V_{n+1}\oplus V_{n+1}))\subseteq V_{n}$ for each $n\in \mathbb{N}$ and the family $\{V_{n}\cap H:n\in \mathbb{N}\}$ is a local base at $0$ for $H$.

We construct $\mathcal{P}$ and $\mathcal{F}$ by the same way in Theorem \ref{3dl2}, and we show that $\mathcal{F}$ is a $wcs^{*}$-network for $G$.

Let $\{x_{i}\}_{i}$ be a sequence converging to a point $x$ in $G$ and $U$ be a neighborhood of $x$ in $G$. Choose an open neighborhood $V$ of $0$ in $G$ such that $(x\oplus (V\oplus V))\subseteq U$. Since $\{V_{n}\cap H:n\in \mathbb{N}\}$ is a local base at $0$ for $H$, there exists $n\in \mathbb{N}$ such that $V_{n}\cap H\subseteq V\cap H$. Since $\mathcal{P}$ is a $wcs^{*}$-network for $G/H$, there exists a subsequence $\{\pi (x_{i_{j}})\}_{j}$ of the sequence $\{\pi (x_{i})\}_{i}$ such that $\{\pi (x_{i_{j}}):j\in \mathbb{N}\}\subseteq P_{\alpha}\subseteq \pi ((x\oplus V_{n+1})\cap (x\oplus V))$ for some $\alpha \in \Lambda$. As the sequence $\{x_{i}\}_{i}$ converges to $x$, we have some $x_{i_{j}}\in x\oplus V_{n+2}$ for each $j\in \mathbb{N}$. Furthermore, since $x_{i_{1}}\in \pi ^{-1}(P_{\alpha})$, $(x_{i_{1}}\oplus V_{n+2})\cap \pi^{-1}(P_{\alpha})$ is non-empty and open in $\pi^{-1}(P_{\alpha})$. Then it follows from $\overline{B_{\alpha}}=\pi^{-1}(P_{\alpha})$ that there exists $m\in \mathbb{N}$ such that $b_{\alpha ,m}\in x_{i_{1}}\oplus V_{n+2}$. Then
\begin{eqnarray}
b_{\alpha ,m}&\in&x_{i_{1}}\oplus V_{n+2}\nonumber\\
&\subseteq&(x\oplus V_{n+2})\oplus V_{n+2}\nonumber\\
&=&\bigcup_{u,v\in V_{n+2}}\{(x\oplus u)\oplus v\}\nonumber\\
&=&\bigcup_{u,v\in V_{n+2}}\{x\oplus (u\oplus \mbox{gyr}[u,x](v))\}\nonumber\\
&=&x\oplus (V_{n+2}\oplus V_{n+2}).\nonumber
\end{eqnarray}

Moreover, it is proved in Theorem \ref{3dl2} that $\pi^{-1}(P_{\alpha})\cap (b_{\alpha ,m}\oplus V_{n+1})\subseteq U$.

In conclusion, $G$ has a star-countable $wcs^{*}$-network.
\end{proof}

\noindent{\bf References}


\begin{thebibliography}{99}

\bibitem{Aav66} A.V. Arhangel'ski\v\i, Mappings and spaces, Russian Math. Surveys 21 (1966) 115-162.

\bibitem{AA} A.V. Arhangel' ski\v\i, M. Tkachenko, {\it Topological Groups and Related Structures}, Atlantis Press and World Sci., 2008.

\bibitem{AA2010} A.V. Arhangel' ski\v\i, M.M. Choban, {\it On remainders of rectifiable spaces}, Topol. Appl., 157 (2010) 789-799.

\bibitem{AW} W. Atiponrat, {\it Topological gyrogroups: generalization of topological groups}, Topol. Appl., 224 (2017) 73--82.

\bibitem{AW1} W. Atiponrat, R. Maungchang, {\it Complete regularity of paratopological gyrogroups}, Topol. Appl., 270 (2020) 106951.

\bibitem{AW2020} W. Atiponrat, R. Maungchang, {\it Continuous homomorphisms, the left-gyroaddition action and topological quotient gyrogroups}, Quasigroups Relat. Syst., 28(2020) 17--28.

\bibitem{BtZl04}  T. Banakh, L. Zdomsky\u{i}, The topological structure of (homogeneous) spaces and groups with countable $cs^{\ast}$-character, Appl. Gen. Topol. 5 (1) (2004) 25-48.

\bibitem{BL} M. Bao, F. Lin, {\it Feathered gyrogroups and gyrogroups with countable pseudocharacter}, Filomat, 33 (16)(2019) 5113--5124.

\bibitem{BL1} M. Bao, F. Lin, {\it Submetrizability of strongly topological gyrogroups}, Houston Jour. Math., (2020) 6132.

\bibitem{BL2} M. Bao, F. Lin, {\it Quotient with respect to admissible L-subgyrogroups}, Topol. Appl., 301 (2021) 107492.

\bibitem{BL3} M. Bao, F. Lin, {\it Submaximal properties in (strongly) topological gyrogroups}, Filomat, 35 (7)(2021) 2371--2382.

\bibitem{BLL} M. Bao, Y. Lin, F. Lin, {\it Strongly Topological Gyrogroups with Remainders Close to Metrizable}, Bulle. Iran. Math. Soc., (2021) DOI:10.1007/s41980-021-00594-8.

\bibitem{BLX} M. Bao, X. Ling, X. Xu, {\it Strongly topological gyrogroups and quotient with respect to L-subgyrogroups}, Houston Jour. Math., (2021) Accepted.

\bibitem{BX2022} M. Bao, X. Xu, {\it A note on (strongly) topological gyrogroups}, Topol. Appl., 307 (2022) 107950.

\bibitem{BZX} M. Bao, X. Zhang, X. Xu, {\it Separability in (strongly) topological gyrogroups}, Filomat 35 (13) (2021) 4381--4390.

\bibitem{BZX2} M. Bao, X. Zhang, X. Xu, {\it Topological gyrogroups with Fr\'echet-Urysohn property and $\omega^{\omega}$-base}, Bulle. Iran. Math. Sco., (2021) DOI:10.1007/s41980-021-00576-w.

\bibitem{BCR} C.R. Borges, {\it On stratifiable spaces}, Pac. J. Math., 17 (1) (1966) 1-16.

\bibitem{BD} D.K. Burke, {\it Covering properties, in :K. Kunen, J.E. Vaughan (Eds.)}, Handbook of Set-Theoretic Topology, Elsevier Science Publishers B.V., Amsterdam, 1984, pp. 347-422.

\bibitem{CZ} Z. Cai, S. Lin, W. He, {\it A note on Paratopological Loops}, Bulletin of the Malaysian Math. Sci. Soc., 42(5)(2019) 2535-2547.

\bibitem{CZ1} Z. Cai, P. Ye, S. Lin, B. Zhao, {\it A note on paratopological groups with an $\omega ^{\omega}$-base}, Topol. Appl., 275 (2020) 107151.

\bibitem{CMT} M.J. Chasco, E. Mart\'{i}n-Peinador, V. Tarieladze, {\it A class of angelic sequential non-Fr\'echet-Urysohn topological groups}, Topol. Appl., 154 (2007) 741-748.

\bibitem{CGD} G.D. Creede, {\it Concerning semi-stratifiable spaces}, Pac. J. Math., 32 (1) (1970) 47-54.

\bibitem{E} R. Engelking, General Topology(revised and completed edition), Heldermann Verlag,
Berlin, 1989.

\bibitem{FL} A. Fedeli, A. Le Donne, {\it On good connected preimages}, Topol. Appl., 125 (2002) 489-496.

\bibitem{FmSiTm19} M. Fern\'{a}ndez, I. S\'{a}nchez, M.G. Tkachenko, Coset spaces and cardinal invariants, Acta Math. Hungar. 159 (2) (2019) 486-502.

\bibitem{FM} M. Ferreira, {\it Factorizations of M\"{o}bius gyrogroups}, Adv. Appl. Clifford Algebras, 19 (2009) 303--323.

\bibitem{FM1} M. Ferreira, G. Ren, {\it M\"{o}bius gyrogroups: A Clifford algebra approach}, J. Algebra, 328 (2011) 230-253.

\bibitem{FM2} M. Ferreira, S. Teerapong, {\it Orthogonal gyrodecompositions of real inner product gyrogroups}, Symmetry, 12(6) (2020),941, 37pp.

\bibitem{FS} S.P. Franklin, {\it Spaces in which sequences suffice}, Fundam. Math. 57 (1965) 107--115.

\bibitem{GK} S. Gabriyelyan, J. Kakol, {\it On topological spaces and topological groups with certain local countable networks}, Topol. Appl., 190 (2015) 59-73.

\bibitem{GKL} S. Gabriyelyan, J. Kakol, A. Leiderman,{\it On topological groups with a small base and metrizability}, Fundam. Math., 299 (2015) 129-158.

\bibitem{GG} G. Gruenhage, {\it Generalized metric spaces, in: K. kunen, J.E. Vaughan (Eds), Handbook of Set-Theoretic Topology }, Elsevier Science Publishers B.V., Amsterdam, 1984, pp. 423--501.

\bibitem{GMT} G. Gruenhage, E.A. Michael, Y. Tanaka, {\it Spaces determined by point-countable cover}, Pac. J. Math., 113 (1984) 303-332.

\bibitem{GJ} J. Gerlits, Zs. Nagy, {\it Some properties of C(X), I}, Topol. Appl., 14 (1982) 151--161.

\bibitem{GJA} J.A. Guthrie, {\it A characterization of $\aleph_{0}$-spaces}, Gen. Topol. Appl., 1 (1971) 105-110.

\bibitem{LF} F. Lin. R. Shen, {\it On rectifiable spaces and paratopological groups}, Topol. Appl., 158 (2011) 597--610.

\bibitem{LF1} F. Lin. C. Liu, S. Lin, {\it A note on rectifiable spaces}, Topol. Appl., 159(2012) 2090--2101.

\bibitem{LF2} F. Lin, {\it Compactly generated rectifiable spaces or paratopological groups}, Math. Commun., 18(2013) 417--427.

\bibitem{LF3} F. Lin, J. Zhang, K. Zhang, {\it Locally $\sigma$-compact rectifiable spaces}, Topol. Appl., 193(2015) 182-191.

\bibitem{LS1} S. Lin, Y. Tanaka, {\it Point-countable $k$-network, closed maps, and related results}, Topol. Appl., 59 (1994) 79-86.

\bibitem{Ls96} S. Lin, On sequence-covering $s$-mappings, Adv. Math.~(China) 25 (6) (1996) 548-551 (in Chinese).

\bibitem{Ls97} S. Lin, A note on the Arens' space and sequential fan, Topol. Appl. 81 (3) (1997) 185-196.

\bibitem{LS2} S. Lin, {\it Mapping theorems on $k$-semistratifiable spaces}, Tsuk. J. Math., 21 (3) (1997) 809-815.

\bibitem{LS} S. Lin, F. Lin, L-H. Xie, {\it The extensions of some convergence phenomena in topological groups}, Topol. Appl., 180 (2015) 167--180.

\bibitem{linbook} S. Lin, Z. Yun, {\it Generalized Metric Spaces and Mappings},
Science Press, Atlantis Press, 2017.

\bibitem{linbook1} S. Lin, {\it Point-Countable Covers and Sequence-Covering Mappings}, Chinese Science Press, Beijing, 2002.

\bibitem{Lingxuewei} X. Ling, S. Lin, W. He, {\it Metrizable and weakly metrizable coset spaces}, Topol. Appl., 291(2021) 107625.

\bibitem{LDJ} D.J. Lutzer, {\it Semimetrizable and stratifiable spaces}, Gen. Topol. Appl., 1 (1) (1971) 43-48.

\bibitem{ME1} E. Michael, {\it $\aleph_{0}$-spaces}, J. Math. Mech., 15 (1966) 983-1002.

\bibitem{ME} E.A. Michael, {\it A quintuple quotient quest}, Gen. Topol. Appl., 2 (1972) 91--138.

\bibitem{Nt85} T. Nogura, The product of $\langle\alpha_{i}\rangle$-spaces, Topol. Appl. 21 (3) (1985) 251-259.

\bibitem{SF} F. Siwiec, {\it Sequence-covering and countably bi-quotient mappings}, Gen. Topol. Appl., 1 (1971) 143--154.

\bibitem{SL} L.V. Sabinin, L.L. Sabinin, L.V. Sbitneva, {\it On the notion of gyrogroup}, Aequ. Math., 56 (1998) 11--17.

\bibitem{ST} T. Suksumran, K. Wiboonton, {\it Isomorphism theorems for gyrogroups and L-subgyrogroups}, J. Geom. Symmetry Phys., 37 (2015) 67--83.

\bibitem{ST1} T. Suksumran, {\it Essays in mathematics and its applications: in honor of Vladimir Arnold, in: P.M. Pardalos, T.M. Rassias (Eds.), The Algebra of Gyrogroups: Cayley's Theorem, Lagrange's Theorem, and Isomorphism Theorems}, Springer, 2016, pp. 369--437.

\bibitem{ST2} T. Suksumran, {\it Special subgroups of gyrogroups: commutators, nuclei and radical}, Math. Interdiscip. Res, 1 (2016) 53--68.

\bibitem{UA1988} A.A. Ungar, {\it The Thomas rotation formalism underlying a nonassociative group structure for relativistic velocities}, Appl. Math. Lett., 1(4) (1988)403--405.

\bibitem{UA} A.A. Ungar, {\it Analytic Hyperbolic Geometry and Albert Einstein's Special Theory of Relativity}, World Scientific, Hackensack, New Jersey, 2008.

\bibitem{UA2005} A.A. Ungar, {\it Analytic hyperbolic geometry: Mathematical foundations and applications}, World
Scientific, Hackensack, 2005.

\bibitem{UA2002} A.A. Ungar,{\it  Beyond the Einstein addition law and its gyroscopic Thomas precession: The theory
of gyrogroups and gyrovector spaces}, Fundamental Theories of Physics, vol. 117, Springer, Netherlands, 2002.

\bibitem{WAS2020} J. Wattanapan, W. Atiponrat, T. Suksumran, {\it Embedding of locally compact Hausdorff topological
gyrogroups in topological groups}, Topol. Appl. 273(2020), 107102.



\end{thebibliography}
\end{document}